\documentclass[11pt,reqno]{amsproc}

\usepackage[utf8]{inputenc}
\usepackage[margin=1in]{geometry}
\usepackage{amsmath}
\usepackage{amssymb}
\usepackage{amsfonts}
\usepackage{amsthm}
\usepackage{mathrsfs}
\usepackage{mathtools}
\usepackage{enumitem}
\usepackage[dvipsnames]{xcolor}

\usepackage{hyperref}
\hypersetup{
    colorlinks=true,
    citecolor=magenta,
    linkcolor=blue,
    filecolor=teal,
    urlcolor=cyan,
    pdftitle={Regularity for axisymmetric Navier-Stokes with an Euler length},
    pdfauthor={Peter Constantin, Mihaela Ignatova, and Vlad Vicol},
    pdfsubject={Axisymmetric Navier-Stokes regularity},
    pdfkeywords={Navier-Stokes equations, local regularity, axial symmetry, ancient vorticity limits},
    }
\usepackage{graphicx}
\usepackage{tikz}

\numberwithin{equation}{section}

 \newtheorem{theorem}{Theorem}[section]
 \newtheorem{proposition}[theorem]{Proposition}
 \newtheorem{lemma}[theorem]{Lemma}
 \newtheorem{corollary}[theorem]{Corollary}
 \newtheorem{definition}[theorem]{Definition}
 \theoremstyle{remark}
 \newtheorem{remark}[theorem]{Remark}
\def\p{\partial}

\newcommand{\norm}[1]{\left\|#1\right\|}

\newcommand*{\curl}{\ensuremath{\mathrm{curl\,}}}
\newcommand*{\supp}{\ensuremath{\mathrm{supp\,}}}
\renewcommand*{\div}{\ensuremath{\mathrm{div\,}}}

\newcommand*{\RR}{\ensuremath{\mathbb{R}}}
\newcommand{\eps}{\varepsilon}

\newcommand{\Ecal}{\mathcal{E}}

\newcommand{\Dcal}{\mathcal{D}}

\newcommand{\Ocal}{\mathcal{O}}

\newcommand{\Ucal}{\mathcal{U}}

\newcommand{\esssup}{\operatorname*{ess\,sup}}

\title[Axisymmetric Navier-Stokes with an Euler length]{Regularity for axisymmetric Navier-Stokes with an Euler length}

\author{Peter Constantin} 
\address{Department of Mathematics, Princeton University, Princeton, NJ 08540}
\email{const@math.princeton.edu}
\urladdr{https://web.math.princeton.edu/~const/}

\author{Mihaela Ignatova}
\address{Department of Mathematics, Temple University, Philadelphia, PA 19122}
\email{ignatova@temple.edu}
\urladdr{https://sites.temple.edu/ignatova/}

\author{Vlad Vicol}
\address{Department of Mathematics, Courant Institute, New York University, New York, NY, 10012}
\email{vicol@cims.nyu.edu}
\urladdr{https://cims.nyu.edu/~vicol/}

\date{}
\subjclass[2020]{Primary 35Q30; Secondary 35B44, 35Q31}
\keywords{Navier-Stokes equations, local regularity, axial symmetry, ancient vorticity limits.}

\begin{document}
\begin{abstract}
We prove local regularity for axisymmetric suitable weak solutions of the 3D Navier-Stokes equations, which are smooth before the terminal time $t=0$, and satisfy Type~II pointwise bounds at a vanishing length scale $\ell(t)$. We say $\ell(t)$ is an \emph{Euler length} if it is non-increasing, satisfies a doubling condition, and if $\ell(t)\to0$ and $(-t)/\ell(t)^2\to0$ as $t\to0^-$. This includes power laws $\ell(t)=(-t)^\gamma$ with $0<\gamma<1/2$, and logarithmic enlargements of the parabolic length. Our main result shows that local bounds of the type  $|u(\cdot,t)| \leq C\ell(t)/(-t)$ and $|\nabla^2 u(\cdot,t)|\leq C/((-t)\ell(t))$ for all $t\in (-1,0)$ imply regularity. The proof adapts the circulation and potential-vorticity argument of our earlier paper~\cite{CIV26} to ancient limits obtained from the rescaled vorticity system by zooming in. We show that the second derivative a priori assumption may be replaced by a H\"older bound on the azimuthal vorticity and a corresponding bound on its potential vorticity.
%\hfill \today
\end{abstract}
\maketitle
%\setcounter{tocdepth}{1}
%\tableofcontents

\section{Introduction}
\label{sec:intro}

Whether an axisymmetric finite energy solution of the unforced 3D Navier-Stokes equations, arising from smooth and decaying initial data, can develop a finite time singularity remains an open problem. 

In this paper, we consider the 3D Navier-Stokes equations without external forcing
\begin{subequations}
\label{eq:nse}
\begin{align}
\p_t u+(u\cdot\nabla)u-\Delta u+\nabla\pi=0\,,
\\ 
\div u=0\,,
\end{align}
\end{subequations}
where $u\colon Q\to\RR^3$ is the velocity, $\pi\colon Q\to\RR$ is the pressure, and $Q=B(1)\times(-1,0)$ is an open unit parabolic cylinder.\footnote{Here and throughout this paper, we use the notation $B(\rho)=\{x\in\RR^3:|x|<\rho\}$ and $Q(\rho)=B(\rho)\times(-\rho^2,0)$.} 
Throughout the paper we take $(u,\pi)$ to be a suitable weak solution of~\eqref{eq:nse} in the sense of Caffarelli, Kohn, and Nirenberg~\cite{CKN}, with $\pi\in L^{3/2}(Q)$, which is smooth on compact subsets of $Q$. We assume no regularity at time $t=0$. Moreover, we assume throughout the paper that the solution $(u,\pi)$ of~\eqref{eq:nse} is axisymmetric about the $z$-axis. With $x=(x',z)$, $r=|x'|$, and the usual cylindrical frame $(e_r,e_\theta,e_z)$, we set
\begin{equation*}
u = u_r e_r + u_\theta e_\theta + u_z e_z = b + u_\theta e_\theta\,,
\end{equation*}
where $u_r, u_\theta, u_z$ depend only on $(r,z,t)$. We denote by $b$ the meridional velocity.

\subsection{Main result}
In order to state our main result, we introduce:
\begin{definition}[{\bf Euler length}]
\label{def:Euler:length}
A continuous function $\ell \colon (-1,0) \to (0,\infty)$ is called an \emph{Euler length} if it is non-increasing, it satisfies the doubling property 
\begin{subequations}
\label{eq:pointwise:length}
\begin{equation}
\label{eq:pointwise:length:doubling}
 \ell(2t)\leq D\ell(t)\,, \qquad \mbox{for all }t\in (-1/2,0)\,,
\end{equation}
for a suitable constant $D \in [1,\infty)$, it asymptotically vanishes
\begin{equation}
\lim_{t\uparrow0}\ell(t)=0\,,
\end{equation}
and it satisfies
\begin{equation}
\label{visczero}
\lim_{t\uparrow0}\frac{(-t)}{\ell(t)^2}=0.
\end{equation}
\end{subequations}
\end{definition}

The term ``Euler length'' is motivated in part by the requirement  \eqref{visczero} that the viscosity coefficient under Euler rescaling vanishes asymptotically, so formally the Euler behavior emerges in the limit.

Examples of Euler lengths are $\ell(t)=(-t)^\gamma$ for $\gamma \in (0,1/2)$ and $\ell(t)= (-t)^{1/2} (\log(e/(-t)))^a$ with $0<a\leq 1/2$. 

Recall that the point $(0,0)$ is called \emph{regular} if
\begin{align}
\esssup_{Q(\rho)}|u|<\infty
\quad\mbox{for some }\rho>0
\,.
\label{eq:intro:regular}
\end{align}
More generally, for $x_0\in B(1)$ the point $(x_0,0)$ is called \emph{regular} if~\eqref{eq:intro:regular} holds with $Q(\rho)$ replaced by $B(x_0,\rho)\times(-\rho^2,0)\subset Q$, and \emph{singular} otherwise.
The simplest way to state the main result of this paper is:

\begin{theorem}[{\bf Local regularity under Type~II bounds at an Euler length}]
\label{thm:pointwise:variable}
Let $(u,\pi)$ be a suitable weak solution of~\eqref{eq:nse} in $Q$, which is smooth on compact subsets of $Q$, and is axisymmetric. Let $\ell = \ell(t)$ be an Euler length, in the sense of Definition~\ref{def:Euler:length}. Suppose that there exist constants $C_0, C_2>0$ such that 
\begin{align}
\norm{u(\cdot, t)}_{L^\infty(B(1))}
\leq C_0\frac{\ell(t)}{(-t)}
\,,
\qquad
\norm{\nabla^2u(\cdot,t)}_{L^\infty(B(1))}
\le C_2 \frac{1}{(-t)\ell(t)}
\,,
\label{eq:pointwise:velocity}
\end{align}
for all $t\in (-1,0)$.  Then, $(0,0)$ is a regular point.
\end{theorem}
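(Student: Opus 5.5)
We argue by contradiction and blow up at the scale $\ell(t)$, using the Euler (inviscid) rescaling suggested by the definition of Euler length. Suppose $(0,0)$ is singular. Take a sequence $t_n\uparrow 0$ and set $\ell_n=\ell(t_n)$, $\tau_n=(-t_n)$. Define the rescaled fields
\[
u^{(n)}(y,s)=\frac{\tau_n}{\ell_n}\,u\bigl(\ell_n y,\, t_n+\tau_n s\bigr).
\]
These are defined for $|y|<1/\ell_n$ and $s\in(-(1-\tau_n)/\tau_n,1)$. They solve Navier--Stokes with viscosity $\nu_n=\tau_n/\ell_n^2\to0$ by \eqref{visczero}.

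\textbf{Step 1: uniform bounds on the rescaled fields.} The monotonicity and doubling property \eqref{eq:pointwise:length:doubling} of $\ell$ show that \eqref{eq:pointwise:velocity} gives
\[
|u^{(n)}|\lesssim 1,\qquad |\nabla^2u^{(n)}|\lesssim 1
\]
on time windows $s\in(-S,1/2]$, uniformly in $n$ for each fixed $S$. By interpolation the same holds for $\nabla u^{(n)}$, and hence for the vorticity $\omega^{(n)}$. To control time derivatives we use the equation. The pressure is handled by local decomposition plus harmonic remainder, and axisymmetry is preserved. Extracting a subsequence, we get local uniform convergence to an ancient, axisymmetric, bounded, Lipschitz solution $\bar u$ of the 3D Euler equations on $\RR^3\times(-\infty,1/2]$. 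The convergence holds in $C^{1,\alpha}_{loc}$ in space.

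\textbf{Step 2: show the limit is nontrivial.} This uses the singularity assumption. By the Caffarelli--Kohn--Nirenberg $\varepsilon$-regularity, at a singular point the quantity $\sup_r r^{-2}\int_{Q(r)}|u|^3$ stays bounded below. Combined with the pointwise velocity bound, this forces $|u(x_n,t_n)|\gtrsim \ell_n/\tau_n$ at some point with $|x_n|\lesssim \ell_n$ along a suitable sequence. A possible alternative is a lower bound on $|\omega|$ at parabolic scale, transported to the $\ell$ scale. Either way we obtain $|\bar u(\bar y,0)|>0$ for some bounded $\bar y$. In fact we want a nonzero swirl or azimuthal vorticity near the axis.

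\textbf{Step 3: rigidity for ancient axisymmetric Euler limits.} We expect this step to be the main obstacle, and we adapt the circulation/potential-vorticity argument of \cite{CIV26}. For the rescaled solutions, the circulation $\Gamma^{(n)}=r u_\theta^{(n)}$ satisfies a drift-diffusion equation with viscosity $\nu_n$. By the maximum principle it vanishes on the axis with $|\Gamma^{(n)}|\lesssim r^2$. The potential vorticity $\omega_\theta^{(n)}/r$ satisfies an equation forced by $\partial_z(\Gamma^{(n)})^2/r^4$. The bound on $\nabla^2 u$ gives uniform control of $\omega_\theta/r$ near the axis, since $\omega_\theta$ vanishes on the axis and is Lipschitz.

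In the limit, $\bar\Gamma$ and $\bar\omega_\theta/r$ are transported by $\bar b$, bounded, and defined for all negative times. We then run the \cite{CIV26} argument in the ancient setting. First, $\bar\Gamma$ is constant along the Lipschitz trajectories. Boundedness of $\bar u$ together with the $r^2$ vanishing should force $\bar\Gamma$ to be Liouville-trivial near the axis. This is where the uniformity in $n$ and the ancient time direction are essential: backward trajectories either stay near the axis, forcing $\bar\Gamma=0$, or the bounds degenerate. Next, with $\bar\Gamma\equiv0$ the potential vorticity is transported. The analogous argument on $\bar\omega_\theta/r$ and incompressibility then give $\bar\omega\equiv0$, so $\bar u$ is a bounded harmonic field, hence constant. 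Axisymmetry forces this constant to be $c\,e_z$. It must vanish by the normalization, or contradict Step 2 after subtracting the Galilean drift, so that the local structure still exhibits the singularity.

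This contradiction with Step 2 proves that $(0,0)$ is regular. The delicate points are making the nontriviality in Step 2 survive the Euler rescaling, and passing the transport/maximum-principle bounds for $\Gamma$ and $\omega_\theta/r$ to the limit uniformly as $\nu_n\to0$.
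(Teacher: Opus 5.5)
There is a genuine gap, and it starts with Step~2. The $\varepsilon$-regularity lower bound at a singular point lives at the \emph{parabolic} scale. It gives points $(x,t)\in Q(r)$ with $|u(x,t)|\gtrsim r^{-1}$. Since $r\ge(-t)^{1/2}$, after your Euler rescaling at that time the guaranteed lower bound is at most of size $(-t)^{1/2}/\ell(t)$, which tends to zero by \eqref{visczero}. Nothing in \eqref{eq:pointwise:velocity} forces $|u|$ to actually reach the size $\ell(t)/(-t)$, so your blow-up limit may be $\bar u\equiv0$. The relevant points may also lie at distance $\gg\ell(t_n)$ from the origin and escape a zoom centered there. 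Even granting $|\bar u(\bar y,0)|>0$, there is no contradiction at the end. Your rigidity step concludes that $\bar u$ is constant, a nonzero constant $c\,e_z$ is compatible with $|\bar u(\bar y,0)|>0$, and no normalization or Galilean subtraction removes it.

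The quantity with the same size under parabolic and Euler scaling is the gradient $(-t)\nabla u$. The argument has to be organized around it in two stages, and the second is absent from your proposal. (i) Prove $\lim_{t\uparrow0}(-t)\|\nabla b(\cdot,t)\|_{L^\infty(B(1/2))}=0$ by contradiction. Zoom at scale $\ell(t_n)$ about base points built from $x_n\in B(1/2)$ with $(-t_n)|\nabla b(x_n,t_n)|\ge c_0$: take the foot of $x_n$ on the axis if $|x_n'|/\ell(t_n)$ stays bounded, and $x_n$ itself otherwise. The limit then has $|\nabla B^*(y^*)|\ge c_0$ and cannot be constant. (ii) Show that this smallness alone gives regularity. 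Since $|u_r/r|\le\|\nabla b\|_{L^\infty}\le\eta/(-t)$ near $t=0$, the maximum principle for $(-t)^\eta u_\theta$ gives $|u_\theta|\le C_\eta(-t)^{-\eta}$ on a fixed ball. A localized enstrophy estimate, with cutoff errors supported in a regular annulus, then gives $\|\chi\omega\|_{L^2}^2\lesssim(-t)^{-C_*\eta}$. Hence $u\in L^4_tL^6_x$ near $(0,0)$ for $\eta$ small, and a local Serrin criterion concludes.

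Step~3 also fails as stated. Bounded ancient axisymmetric Euler flows with all derivatives bounded need not be trivial. The steady flows $\frac{r}{1+r^2}\,e_\theta$ and $\frac{1}{1+r^2}\,e_z$ have $\Gamma\not\equiv0$ and $\omega_\theta/r=2(1+r^2)^{-2}\not\equiv0$ respectively, both constant along trajectories. So transport plus boundedness of $\bar u$ forces nothing; in \cite{CIV26} the self-similar exponential weight did that job, and it is absent here. Two ingredients are missing.
\begin{itemize}
\item The rescaled circulation is exactly $\Gamma^{(n)}=\nu_n\Gamma$, and $\Gamma$ is bounded on a fixed ball up to $t=0$. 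Partial regularity gives a sphere carrying no singular point, $\varepsilon$-regularity bounds $u$ on an annulus around it, and the maximum principle for the drift--diffusion equation of $\Gamma$ does the rest. Hence the swirl and the source $\partial_z(u_\theta^2/r^2)$ disappear in the limit, and no Liouville argument for $\bar\Gamma$ is needed.
\item You must keep the time decay that your window-wise bounds ``$\lesssim1$'' discard. At the earlier time $t=t_n|\tau|$ (so $|\tau|=1-s$ in your variables), the hypotheses and the monotonicity of $\ell$ give $|\nabla u^{(n)}|\lesssim1/|\tau|$ and a rescaled potential vorticity $\lesssim1/|\tau|$. Characteristics stay off the axis: from $|\bar u_r|\lesssim|y'|/|\tau|$, the distance to the axis obeys $\varrho(\tau)\ge\varrho(-1)|\tau|^{-C}$. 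Carrying the $1/|\tau|$ decay back along them forces the limiting potential vorticity to vanish at $\tau=-1$. When the base point recedes from the axis, one transports $\omega_\theta$ itself instead.
\end{itemize}

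Finally, your Step~1 passage to the limit in the momentum equation is not justified. At the Euler scale the rescaled pressure $\frac{(-t_n)^2}{\ell_n^2}\pi$ is not controlled, and its harmonic part gives no time equicontinuity of $u^{(n)}$. Instead, pass to the limit in the pressure-free vorticity equation, accept only weak-$*$ convergence of the velocity in space-time, and extract $C^1$ convergence of the meridional velocity on the single slice $\tau=-1$.
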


\begin{remark}
\label{rem:pointwise:comments}
The following comments on the hypotheses of Theorem~\ref{thm:pointwise:variable} are in order:
\begin{itemize}[leftmargin=2em]
\item[(a)] For the power law $\ell(t)=(-t)^\gamma$ with $\gamma \in (0,1/2)$, the two Type~II bounds in~\eqref{eq:pointwise:velocity} say that the natural self-similar Euler rescaling $U(y,t) :=(-t)^{1-\gamma}u((-t)^\gamma y,t)$ satisfies $\norm{U(\cdot,t)}_{L^\infty}\leq C_0$ and $\norm{\nabla^2_y U(\cdot,t)}_{L^\infty}\leq C_2$, where the $L^\infty$ norm is taken on the expanding ball $B((-t)^{-\gamma})$.

\item[(b)] If the first bound in~\eqref{eq:pointwise:velocity} holds with the parabolic length scale $\ell(t) = (-t)^{1/2}$, then we have a standard Type~I bound. In that case, regularity is known due to the works of Chen, Strain, Tsai, and Yau~\cite{CSTY08,CSTY09}, Koch, Nadirashvili, Seregin, and \v{S}ver\'ak~\cite{KNSS09}, or Seregin and \v{S}ver\'ak~\cite{SereginSverak09}.

\item[(c)] The Type~II assumption made in~\eqref{eq:pointwise:velocity} is more restrictive than necessary, and the bounds on the second derivative can be relaxed. In Section~\ref{sec:sharp} we prove the same regularity result under weaker hypotheses: a H\"older seminorm bound on the azimuthal vorticity $\omega_\theta=\p_zu_r-\p_ru_z$ and a supremum bound for the potential vorticity $\Omega=\omega_\theta/r$, at the  Euler length. Precisely, in  Theorem~\ref{thm:sharp} we assume, for some $0<\alpha\le1$, the velocity $L^\infty$ bound in~\eqref{eq:pointwise:velocity} together with
\begin{align*}
[\omega_\theta(\cdot,t)]_{C^\alpha(B(1))}\le\frac{C_\alpha}{(-t) \ell(t)^\alpha}\,,\qquad
\norm{\Omega(\cdot,t)}_{L^\infty(B(1))}\le\frac{C_H}{(-t)\ell(t)}
\,,
\end{align*}
for some constants $C_\alpha, C_H>0$.
\item[(d)] The same weakening of regularity assumptions from Theorem~\ref{thm:sharp} also applies to the main result of~\cite[Section~4]{CIV26}. Theorem~4.5 in~\cite{CIV26} excludes nontrivial $C^2$ axisymmetric globally self-similar Euler profiles with similarity exponent $\gamma<1/2$ which obey the natural far-field bounds~\cite[(3.8)]{CIV26}. In Proposition~\ref{prop:civ} we extend this result to continuous self-similar profiles which vanish at the origin and are sublinear at infinity, whose azimuthal vorticity is locally H\"older continuous, and whose potential vorticity is locally bounded; no far-field decay beyond sublinearity is assumed.
\end{itemize}
\end{remark}

\subsection{Prior results}
\label{sec:prior:results}
In the absence of swirl, global regularity for the axisymmetric 3D Navier-Stokes equations was established by 
Ladyzhenskaya~\cite{Ladyzhenskaya68} and by Ukhovskii and Yudovich~\cite{UY68}. 

In the presence of swirl, the regularity of axisymmetric solutions to 3D Navier-Stokes has been established under the additional hypothesis of a Type~I bound, an assumption which is invariant under parabolic scaling. Chen, Strain, Yau, and Tsai~\cite{CSTY08} proved regularity of the solution at $t=0$ assuming the pointwise bound $|u|\leq C(r^2-t)^{-1/2}$. Chen, Strain, Tsai, and Yau~\cite{CSTY09} and Koch, Nadirashvili, Seregin, and \v{S}ver\'ak~\cite{KNSS09} proved regularity assuming that $r|u|\leq C$. Seregin and \v{S}ver\'ak~\cite{SereginSverak09} imposed a temporal Type~I bound on the meridional velocity alone:~$ |b|\leq C (-t)^{-1/2}$. Zhang~\cite{QiZhang26} recently proved a one-component one-sided Type~I result: if $u_r \geq - C (-t)^{-1/2}$ for some $C>0$, and if $r u_\theta$ is bounded, then the solution is regular. All of these proofs rely on the structure of the equations satisfied by the circulation and by the potential vorticity $\Omega=\omega_\theta/r$. We refer to the book of Tsai~\cite[Chapter 10]{Tsai18} for a detailed account of the many conditional regularity results for solutions of 3D axisymmetric Navier-Stokes.

Escauriaza, Seregin, and \v{S}ver\'ak~\cite[Theorem~1.4]{ESS03} proved the regularity of suitable weak solutions which are bounded in the scale-invariant norm $L^\infty_tL^3_x$; their argument is by contradiction and gives no quantitative bound. Tao~\cite{Tao21} made this result quantitative for classical solutions on $\RR^3$ and obtained a quantitative lower bound, of triple logarithmic type, for the blowup rate of the $L^3$ norm at a finite time singularity, along a sequence of times approaching that singularity. Subsequently, Palasek~\cite{Palasek21} replaced the triple logarithm by a double logarithm, for the weighted critical norms $\|r^{1-3/q}u\|_{L^q}$, both for axisymmetric solutions with $q\in(2,3]$ and for solutions without any symmetry with $q\in(3,\infty)$. O\.za\'nski and Palasek~\cite{OzanskiPalasek23} then bounded every derivative of an axisymmetric solution by a double exponential of its weak $L^3$ norm. Seregin~\cite{Seregin20} proved that at a singular axis point the three scale-invariant local energy quantities arising in the theory of partial regularity all have an infinite upper limit, so that an axisymmetric singularity must be Type~II also in the local-energy sense. Seregin also formulated Type~II blowup scenarios through such weighted scale-invariant quantities and studied them in~\cite{Seregin23,Seregin24,Seregin25,Seregin26}. 

The argument in this paper is inspired by the axisymmetric Euler profile theorem that we have established in our earlier work~\cite[Theorem~4.5]{CIV26}. We adapt the potential-vorticity transport along backward meridional trajectories, from the proof of~\cite[Theorem~4.5]{CIV26}, to ancient limits of the rescaled vorticity system; here the circulation is eliminated by the rescaling instead. The type of argument employed here, in which we zoom in at a sequence of points and times where a desired bound fails, and apply a rigidity statement to the resulting ancient limit, is the one through which Type~I singularities were excluded previously~\cite{KNSS09,SereginSverak09}. We emphasize that in this paper we do not assume a self-similar ansatz for the solution.  

\subsection{Main ideas of the proof}
\label{sec:intro:ideas}
The proof of Theorem~\ref{thm:pointwise:variable} has two parts. First, we show that the gradient of the meridional velocity becomes small compared with its scale-invariant bound, namely
\begin{align}
\lim_{t\uparrow0}(-t)\norm{\nabla b(\cdot,t)}_{L^\infty(B(\rho_0))}=0
\quad\mbox{on a fixed ball }B(\rho_0)\,,\quad 0<\rho_0<1
\,.
\label{eq:pointwise:meridional:small}
\end{align}
Second, we show that~\eqref{eq:pointwise:meridional:small} implies regularity at $(0,0)$.

For the first part we argue by contradiction. If~\eqref{eq:pointwise:meridional:small} fails, we zoom in at the length $\ell(t)$ along a sequence of times and points at which $(-t)|\nabla b|$ stays bounded away from zero, and we ask what the limit satisfies. In the context of Type~II weighted scale-invariant bounds on shrinking cylinders, this approach was considered by Seregin~\cite{Seregin23,Seregin24,Seregin25,Seregin26}. At a length larger than the parabolic one the pressure is not controlled after the zoom, so we  pass to the limit in the vorticity equation rather than in the momentum one. Two facts make the limit tractable. First, the circulation $\Gamma$ is bounded on a fixed ball, by partial regularity and the maximum principle for~\eqref{eq:swirl:eqn}. A bounded circulation is subcritical for the rescaling by an Euler length: after the zoom-in, it is bounded by a multiple of $(-t)/\ell(t)^2$, which tends to zero as $t\uparrow 0$. The swirl of every limit therefore vanishes, and with it the source $\p_z(u_\theta^2/r^2)$ of the potential vorticity equation~\eqref{eq:Omega:eqn} vanishes as well. For the ancient limits arising in his Type~II scenarios, Seregin~\cite[Section~2]{Seregin24} observed in the same way that the swirl vanishes. Since the swirl source vanishes, the limiting potential vorticity is transported by a locally bounded, divergence free field which is Lipschitz in space with constant $C_1/|\tau|$ (see~\eqref{eq:pointwise:meridional:bounded}); here $\tau \in (-\infty,-1]$ is the rescaled time of the ancient limit of the vorticity system. The second is that along the backward characteristics of that limiting divergence free field the potential vorticity is constant, and at time $\tau$ it inherits the bound $C/|\tau|$ from the hypotheses~\eqref{eq:pointwise:velocity}. Therefore the limiting azimuthal vorticity vanishes at $\tau=-1$: in the finite-axis case through the potential vorticity, and in the receding-axis case by transporting the azimuthal vorticity itself. This is the content of Lemmas~\ref{lem:ancient:axis} and~\ref{lem:ancient:receding}. The limiting meridional velocity at $\tau=-1$ is then curl-free, divergence-free, and bounded on $\RR^3$, hence constant. On the other hand, the selection of the contradiction sequence forces the gradient of the limiting meridional velocity to be nonzero at one point. This contradiction proves~\eqref{eq:pointwise:meridional:small}; see Proposition~\ref{prop:pointwise:expanding}.

For the second part, we apply the maximum principle in the swirl equation~\eqref{eq:swirl:eqn} to $(-t)^\eta u_\theta$ and obtain $|u_\theta|\leq C_\eta(-t)^{-\eta}$ on a smaller fixed ball, for every $\eta>0$. A localized enstrophy estimate then gives $u\in L^4_tL^6_x$ on a fixed parabolic cylinder with top $(0,0)$. Regularity then follows from the localized version of the Ladyzhenskaya-Prodi-Serrin criterion; see Section~\ref{sec:pointwise:regularity}.

\begin{remark}[\textbf{What is special about axisymmetry?}]
\label{rem:intro:axisymmetry}
The exclusion of Type~I singularities is not known without the assumption of axisymmetry, and we do not know how to prove Theorem~\ref{thm:pointwise:variable} without it when $\ell(t)=(-t)^\gamma$ with $2/5\leq\gamma<1/2$, or when $\ell$ is a logarithmic enlargement of the parabolic length. What this rotational invariance supplies is not merely the reduction to the variables $(r,z)$, which leaves vortex stretching intact, but the two identities~\eqref{eq:swirl:eqn} and~\eqref{eq:Omega:eqn}. The circulation obeys a drift-diffusion equation with no zeroth order term, so that it satisfies a maximum principle. Moreover, the circulation scales like viscosity: an axisymmetric solution carries an a priori bounded critical quantity, and we know of none for~\eqref{eq:nse} in the general setting of flows on $\RR^3$. The potential vorticity obeys a transport-diffusion equation whose only other term is the swirl source $\p_z(\Gamma^2/r^4)$; without swirl this is the global regularity of~\cite{Ladyzhenskaya68,UY68}, and with swirl the Liouville theorems of~\cite{KNSS09} are proved through the same two equations. 
\end{remark}

\begin{remark}[\textbf{Euler profiles and the viscous question}]
\label{sec:intro:euler}
Consider a putative 3D Navier-Stokes singularity in the absence of external forces, constructed from a self-similar Euler singularity by treating the viscosity as an asymptotic perturbation. With the singular time at $t=0$, the similarity ansatz $u(x,t)=(-t)^{\gamma-1}U(x/(-t)^\gamma,-\log(-t))$ carries the viscosity coefficient $(-t)^{1-2\gamma}$, which tends to zero as $t\uparrow 0$ precisely when $\gamma<1/2$. These are the power laws of Definition~\ref{def:Euler:length}. In essence, Theorem~\ref{thm:pointwise:variable} shows what such a construction has to give up in the axisymmetric setting: the viscous solution cannot remain bounded in $C^2$ in similarity variables, on a fixed ball in the original variables, no matter what profile it is meant to approach at the blowup time.
\end{remark}

\subsection*{Organization of the paper}
In Section~\ref{sec:pointwise} we record the axisymmetric identities, a regular annulus obtained from partial regularity, and a transport lemma. In Section~\ref{sec:pointwise:ancient} we prove the two ancient-limit lemmas and deduce Proposition~\ref{prop:pointwise:expanding} from them. In Section~\ref{sec:pointwise:regularity} we give the local regularity conclusion and the logarithmic examples. In Sections~\ref{sec:sharp} and~\ref{sec:sharp:civ} we prove the regularity statements for Navier-Stokes solutions and for Euler self-similar profiles under weaker assumptions. 

\section{Axisymmetric identities and auxiliary results}
\label{sec:pointwise}
In this section we record the axisymmetric identities (see e.g.~\cite[Chapter 10]{Tsai18}) and a few technical lemmas which will be used later in the proof.
We decompose the velocity field into its meridional and swirl components
\begin{equation}
\label{eq:axi:b:w:def}
u=b+w\,,
\qquad 
b:=u_r e_r+u_ze_z\,,
\qquad 
w:=u_\theta e_\theta\,,
\end{equation}
and note that both $b$ and $w$ are divergence free. 
For the azimuthal component of the vorticity we denote
\begin{equation*}
\omega_\theta :=(\curl u)_\theta = \p_z u_r - \p_r u_z\,,
\qquad 
\Omega:=\frac{\omega_\theta}{r}
\,,
\end{equation*}
and note that 
\[
\curl b=\omega_\theta e_\theta\,.
\]
We refer to $\Omega$ as the \emph{potential vorticity}: denoting $\omega=\curl u$ we have $\Omega=\omega\cdot\nabla\theta$, which is Ertel's potential vorticity associated with the azimuthal angle $\theta$, and $\theta$ is materially conserved precisely when the swirl vanishes.

For a vector field $F$ we write $|\nabla F|$ to denote the Euclidean norm of the Jacobian matrix, so that $|\curl F|\le\sqrt2\,|\nabla F|$.  Lipschitz and H\"older seminorms of rotation-invariant scalars are taken in the Euclidean distance. 

\subsection{Axisymmetric identities}
\label{sec:pointwise:identities}
For smooth axisymmetric solutions, the meridional projection of~\eqref{eq:nse} is given by
\begin{subequations}
\label{eq:meridional:nse}
\begin{align}
\p_t b+\div(b\otimes b)+\nabla\pi &=\Delta b-\div(w\otimes w)
\,,
\\
\div b&=0 \,,
\end{align}
with the swirl velocity contributing the Reynolds stress
\begin{align}
\div(w\otimes w) =-\frac{u_\theta^2}{r}\,e_r
\,.
\end{align}
\end{subequations}
The swirl velocity $u_\theta$ and the circulation $\Gamma=ru_\theta$ satisfy
\begin{align}
\p_tu_\theta+b\cdot\nabla u_\theta+\frac{u_r}{r}u_\theta
=\Big(\Delta-\frac1{r^2}\Big)u_\theta\,,
\qquad
\p_t\Gamma+b\cdot\nabla\Gamma
=\Big(\Delta-\frac2r\p_r\Big) \Gamma
\,.
\label{eq:swirl:eqn}
\end{align}
The azimuthal vorticity obeys
\begin{align}
\p_t\omega_\theta+b\cdot\nabla\omega_\theta
= \Big(\Delta-\frac1{r^2}\Big)\omega_\theta + \frac{u_r}{r}\,\omega_\theta+\frac1r\,\p_z\big(u_\theta^2\big)
\,.
\label{eq:vort:theta}
\end{align}
Dividing~\eqref{eq:vort:theta} by $r$, and using $r^{-1}(\Delta-r^{-2})(rg)=(\Delta+2r^{-1}\p_r)g$ for axisymmetric scalars $g$, we obtain
\begin{align}
\p_t\Omega 
+b\cdot\nabla\Omega
=\Big(\Delta+\frac2r\,\p_r\Big)\Omega
+\p_z\Big(\frac{u_\theta^2}{r^2}\Big)
\,.
\label{eq:Omega:eqn}
\end{align}

We recall the standard fact that near the axis of symmetry, the components of a smooth axisymmetric vector field $u$ take the form $u_r=r\,a(r^2,z,t)$, $u_\theta=r\,c(r^2,z,t)$, $u_z=d(r^2,z,t)$ for some smooth functions $a,c,d$; thus, $\Omega$ and $u_\theta^2/r^2$ extend smoothly to the axis, and~\eqref{eq:Omega:eqn} holds on the whole domain. 

The following lemma records bounds for $\Omega$ in terms of the Hessian of $u$.

\begin{lemma}[{\bf Pointwise bounds across the axis}]
\label{lem:pointwise}
Let $u$ be a smooth axisymmetric vector field, not necessarily a solution of~\eqref{eq:nse}, defined on an open axisymmetric set $\Ucal\subset\RR^3$ which meets the axis of symmetry.
\begin{enumerate}[label=(\alph*),leftmargin=2em]

\item We have $\big|\omega_\theta/r\big|\le\sqrt2\,|\nabla^2u|$ at every point of $\Ucal$ off the axis, where $\omega_\theta =\p_z u_r-\p_r u_z$ and $|\nabla^2 u|$ denotes the Euclidean norm of the full tensor $(\p_i\p_ju_k)_{i,j,k}$.

\item With meridional part $b=u_r e_r+ u_z e_z$ and swirl part $w=u_\theta e_\theta$, we have $|\nabla^j u|^2=|\nabla^j b|^2+|\nabla^j w|^2$ pointwise in $\Ucal$ for $j=0,1,2$, and $|\nabla\omega_\theta|\le\sqrt2\,|\nabla^2 u|$ off the axis. In particular, $\omega_\theta$, extended by zero to the axis, is Lipschitz on every open convex set $K \subseteq \Ucal$, with constant $\sqrt{2}\| \nabla^2 u\|_{L^\infty(K)}$.
\end{enumerate}
\end{lemma}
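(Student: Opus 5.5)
The plan is to work in the Cartesian frame. On a small neighbourhood of the axis we write $u$ through the smooth functions $a,c,d$ of the recalled normal form. Away from the axis we use the orthonormal moving frame $(e_r,e_\theta,e_z)$ and track how derivatives act on it.

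For part~(a), fix a point off the axis and rotate coordinates so that the point lies in the half-plane $\{x_2=0,\ x_1=r>0\}$. Here $e_r=e_1$, $e_\theta=e_2$, $e_z=e_3$. Then $\omega_\theta=\p_3u_1-\p_1u_3$ at that point. Axisymmetry gives $\p_3u_1(0,x_2,z)=0$ on the axis, since $u_r$ vanishes there. It also gives $\p_1 u_3=\p_r u_z$, which is even in the reflection $x_1\mapsto -x_1$ through the axis; hence $\p_1u_3(0,0,z)=0$. So $\omega_\theta$ vanishes on the axis. Integrate along the segment from $(0,0,z)$ to $(r,0,z)$:
\[
\omega_\theta(r,0,z)=\int_0^r \p_1\big(\p_3u_1-\p_1u_3\big)(s,0,z)\,ds .
\]
The integrand is bounded by $|\p_1\p_3u_1|+|\p_1\p_1u_3|\le\sqrt2\,|\nabla^2u|$ by Cauchy--Schwarz. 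This gives a bound by $\sqrt2\,r\sup|\nabla^2u|$ along the segment, which is not yet pointwise.

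To obtain the pointwise statement, I would use the structure $u_r=r\,a(r^2,z)$, $u_z=d(r^2,z)$. It gives $\omega_\theta=r\big(\p_z a-2\p_\rho d\big)$ with $\rho=r^2$. Off the axis, we check the components of $\nabla^2 u$ in the frame. The component $\p_1\p_3u_1$ equals $\p_z(a+2r^2\p_\rho a)$, and $\p_1\p_1u_3$ equals $2\p_\rho d+4r^2\p_\rho^2 d$. This does not directly give $\omega_\theta/r$. Better, use the angular components: $\p_2\p_3 u_2=\p_z(u_r/r)$ and $\p_2\p_2u_3=\p_r u_z/r$. These are the hoop-derivative identities coming from $\p_\theta e_r=e_\theta$. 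Then
\[
\frac{\omega_\theta}{r}=\p_2\p_3u_2-\p_2\p_2u_3
\]
is an exact identity at the point, and Cauchy--Schwarz gives the constant $\sqrt2$. So the key computation is the pair of identities $\p_{x_2}\p_{x_3}u_{2}=\p_z u_r/r$ and $\p_{x_2}^2u_3=\p_ru_z/r$ at points with $x_2=0$. Both follow by differentiating $u=u_r(r,z)\,x'/r+u_\theta(r,z)\,x'^\perp/r+u_z e_3$, using $\p_{x_2}r=0$ and $\p_{x_2}(x_2/r)=1/r$ there.

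For part~(b), the split $|\nabla^ju|^2=|\nabla^jb|^2+|\nabla^jw|^2$ follows from the reflection $x_2\mapsto-x_2$ at the chosen point. The field $b$ is even under this reflection and $w$ is odd, as vector fields. Every component of $\nabla^jb$ and the corresponding component of $\nabla^jw$ therefore have opposite parity in $x_2$. At $x_2=0$, each tensor component thus comes from $b$ or from $w$ alone, never both, and the cross terms vanish. On the axis, $w$ and its derivatives satisfy the same orthogonality by continuity.

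For the gradient bound, write $\nabla\omega_\theta=(\p_r\omega_\theta,0,\p_z\omega_\theta)$ at the point. Off the axis, $\omega_\theta=\p_3u_1-\p_1u_3$ holds identically on the half-plane. Its $x_1$ and $x_3$ derivatives are differences of two entries of $\nabla^2u$. Hence $|\nabla\omega_\theta|^2$ is a sum of squares of such differences, and $(\alpha-\beta)^2\le2(\alpha^2+\beta^2)$ over four distinct entries gives the constant $\sqrt2$.

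The Lipschitz statement on a convex $K$ follows by integrating $\nabla\omega_\theta$ along segments. The axis has measure zero, and $\omega_\theta$ extends continuously by zero, as shown above. A segment lying in the axis has zero increment, and segments can be perturbed off the axis by continuity. The main obstacle is the bookkeeping in the off-axis derivative identities, which is routine but fiddly.
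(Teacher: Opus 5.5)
Your proposal is correct and follows the paper's proof essentially step by step. For (a) you use the exact identity $\omega_\theta/r=\p_2\p_3u_2-\p_2^2u_3$ on the half-plane $\{x_2=0\}$; for the orthogonal splitting you use the reflection $x_2\mapsto-x_2$ with $b$ even and $w$ odd; you differentiate $\omega_\theta=\p_3u_1-\p_1u_3$ along the half-plane for the gradient bound; and you integrate along segments for the Lipschitz claim, perturbing them off the axis where the paper instead splits at the crossing point. One small fix: $\p_1u_3$ is odd, not even, in $x_1$ along $\{x_2=0\}$, and the continuity of the zero extension of $\omega_\theta$ follows most directly from (a), since $|\omega_\theta|\le\sqrt2\,r\,|\nabla^2u|$.
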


\begin{proof}[Proof of Lemma~\ref{lem:pointwise}]
Every quantity in the lemma is invariant under rotations about the axis; thus, it suffices to work at a point $x_0=(x_1,0,z)\in \Ucal$, with $x_1 = r >0$. In Cartesian components, an axisymmetric field $F=F_re_r+F_\theta e_\theta+F_ze_z$, with $F_r,F_\theta,F_z$ functions of $(r,z)$, reads
\begin{align}
F_1=F_r\,\frac{x_1}{r}-F_\theta\,\frac{x_2}{r}\,,\qquad
F_2=F_r\,\frac{x_2}{r}+F_\theta\,\frac{x_1}{r}\,,\qquad
F_z=F_z(r,z)
\,.
\label{eq:pointwise:cartesian}
\end{align}
Differentiating~\eqref{eq:pointwise:cartesian} and evaluating at $x_0$, we obtain
\begin{align}
\p_2F_1(x_0)=-\frac{F_\theta}{r}(x_0)
\,.
\label{eq:pointwise:axis:identity}
\end{align}
In order to prove item~(a), we apply~\eqref{eq:pointwise:axis:identity} to the smooth axisymmetric field $F=\omega:=\curl u$. On the half-plane $\{x_2=0,\,x_1>0\}$, where $u_1=u_r$ and $r=x_1$, by~\eqref{eq:pointwise:cartesian} we have $\omega_2=\p_zu_1-\p_1u_z=\omega_\theta$. Since $\omega_1=\p_2u_z-\p_zu_2$, it follows that $\omega_\theta/r = -\p_2\omega_1(x_0)=\p_2\p_zu_2-\p_2^2u_z$, and hence 
\[
\Big|\frac{\omega_\theta}{r}\Big|\le\sqrt2\big(|\p_2\p_zu_2|^2+|\p_2^2u_z|^2\big)^{1/2}\le\sqrt2\,|\nabla^2u|\,,
\]
because the two derivatives are distinct entries of the tensor $(\p_i\p_ju_k)_{i,j,k}$.

In order to prove item~(b), let $S$ be the reflection $(x_1,x_2,z)\mapsto(x_1,-x_2,z)$, which preserves $\Ucal$. By~\eqref{eq:pointwise:cartesian}, the Cartesian components $b=(u_r \frac{x_1}{r},\,u_r \frac{x_2}{r},\,u_z)$ and $w=(-u_\theta \frac{x_2}{r},\,u_\theta \frac{x_1}{r},\,0)$ satisfy $b(Sx)=Sb(x)$ and $w(Sx)=-Sw(x)$ off the axis. Hence $b=\frac12\big(u+S\,u(S\,\cdot)\big)$ and $w=\frac12\big(u-S\,u(S\,\cdot)\big)$ extend smoothly to $\Ucal$, and $b_1$, $b_z$, $w_2$ are even in $x_2$, while $b_2$, $w_1$ are odd. On $\{x_2=0\}$, a derivative of an even function vanishes when $\p_2$ occurs in it an odd number of times, and a derivative of an odd function vanishes when $\p_2$ occurs an even number of times. Thus, for each component $k$ and each $j\le2$, the entries of $\nabla^jb_k$ and of $\nabla^jw_k$ which do not vanish at $x_0$ occupy disjoint index sets. Therefore, when contracting the tensors we obtain $\nabla^jb:\nabla^jw=0$ at $x_0$, and thus $|\nabla^ju|^2=|\nabla^jb|^2+|\nabla^jw|^2$ there. By rotation invariance the identity holds at every point off the axis, and on the axis by continuity.

We recall that $\omega_\theta=\p_zu_1-\p_1u_z$ on the half-plane $\{x_2=0,\,x_1>0\}$. Differentiating this identity along the half-plane, and evaluating at a point $x_0 = (x_1,0,z)$ with $x_1>0$, we have that $\p_1\omega_\theta=\p_1\p_zu_1-\p_1^2u_z$ and $\p_z\omega_\theta=\p_z^2u_1-\p_1\p_zu_z$, while $\p_2\omega_\theta =0$ because $\omega_\theta$ is constant on the circle about the axis through $x_0$, to which $e_2$ is tangent. Thus $|\nabla\omega_\theta(x_0)|^2\le2\big(|\p_1\p_zu_1|^2+|\p_1^2u_z|^2+|\p_z^2u_1|^2+|\p_1\p_zu_z|^2\big)\le2|\nabla^2u(x_0)|^2$. The bound holds off the axis by rotation invariance.

Lastly, by item~(a) we have $|\omega_\theta|\le\sqrt2\,r\,|\nabla^2u|$ off the axis, so that the extension of $\omega_\theta$ by zero to the axis is continuous on $\Ucal$. Let $K\subset\Ucal$ be open and convex, let $x,y\in K$, and let $M=\sqrt2\|\nabla^2u\|_{L^\infty(K)}$. If the segment $[x,y]$ lies in the axis, both values vanish. Otherwise the segment meets the axis in at most one point, $\omega_\theta$ is continuous on $[x,y]$, and $C^1$ with $|\nabla\omega_\theta|\le M$ off that axis point. The desired bound follows by the mean value theorem, applied on each side of that point if there is one.
\end{proof}

\subsection{A regular annulus and bounded circulation}
\label{sec:pointwise:annulus}

By partial regularity, there exists a sphere about the origin which contains no terminal singular point, and on the ball bounded by this sphere the maximum principle bounds the circulation $\Gamma = r u_\theta$. These statements are used in the proof as follows: the bounded circulation is what makes the swirl of every limit vanish in Section~\ref{sec:pointwise:ancient}, and the regular annulus is where the cutoff errors of the enstrophy estimate of Section~\ref{sec:pointwise:regularity} are supported.

\begin{lemma}[{\bf A regular annulus and bounded local circulation}]
\label{lem:pointwise:local:circulation}
For every $0\leq R_1<R_0<1$ there exist $R_1<R_-<R_+<R_0$ and $t_0<0$
such that $u$ and all its spatial derivatives are bounded on the cylindrical annulus domain 
$\{R_-<|x|<R_+\}\times(t_0,0)$. For any fixed
$R_*\in(R_-,R_+)$ there is a constant $C_\Gamma<\infty$ such that 
\begin{align}
\norm{\Gamma}_{L^\infty(B(R_*)\times(t_0,0))}\leq C_\Gamma
\,.
\label{eq:pointwise:circulation}
\end{align}
\end{lemma}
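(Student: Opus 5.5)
The plan is to use the Caffarelli--Kohn--Nirenberg partial regularity theory to find a sphere free of terminal singular points. Then the maximum principle for the circulation equation in~\eqref{eq:swirl:eqn}, on the ball bounded by that sphere, gives~\eqref{eq:pointwise:circulation}.

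\emph{Step 1: a regular annulus.} Let $\Sigma\subset \overline{B(R_0)}\times\{0\}$ be the set of singular points at time $0$. By CKN, the one-dimensional parabolic Hausdorff measure of the singular set vanishes. The singular set is closed and axisymmetric: rotating a regular point about the axis gives a regular point. If some $x_0$ off the axis were singular, the whole circle through $x_0$ would be singular, and this circle has positive one-dimensional measure, a contradiction. So $\Sigma\subset\{x'=0\}$. In the CKN setting of a ball containing the origin, the same measure-zero statement forces $\Sigma$ to have $\mathcal H^1$-measure zero on the segment of the axis. Hence the set of radii $\{|x|: (x,0)\in\Sigma\}$ is a compact subset of $[0,R_0]$ with empty interior. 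Pick $\rho\in(R_1,R_0)$ with no singular point on the sphere $\{|x|=\rho\}\times\{0\}$. By compactness and openness of the regular set, there are $R_-<\rho<R_+$ and $t_0<0$ such that $u$ is bounded on a neighbourhood of $\{R_-\le|x|\le R_+\}\times[t_0,0]$. Smoothness on compact subsets of $Q$ before time $0$, together with local higher regularity near regular points (bootstrap from $L^\infty$ via Serrin-type smoothing in space), then bounds all spatial derivatives on the open annulus.

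\emph{Step 2: bounded circulation.} Fix $R_*\in(R_-,R_+)$ and work on $D=B(R_*)\times(t_0,0)$. On $D\cap\{t<0\}$ the solution is smooth, and $\Gamma$ satisfies $\p_t\Gamma+b\cdot\nabla\Gamma-\Delta\Gamma+\frac2r\p_r\Gamma=0$. The operator has no zeroth order term. The coefficient $\frac2r\p_r$ is singular on the axis, but $\Gamma=r^2c(r^2,z,t)$ vanishes there. Then apply the weak maximum principle on $B(R_*)\times(t_0,t_1)$ for every $t_1<0$: $|\Gamma|$ is bounded by the maximum of $|\Gamma|$ on the parabolic boundary. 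On the lateral boundary $\{|x|=R_*\}$ we have $|\Gamma|\le R_*|u|$, bounded by Step~1. On the bottom $t=t_0$, $\Gamma$ is bounded by smoothness on the compact set $\overline{B(R_*)}\times\{t_0\}\subset Q$. Letting $t_1\uparrow 0$ gives~\eqref{eq:pointwise:circulation}.

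\emph{Main obstacle.} The delicate point is the maximum principle near the axis. One option is to regard $\Gamma$ as a function on $\RR^5$, where $\Delta_{\RR^3}-\frac2r\p_r$ becomes a regular operator; the other is to argue directly using $\Gamma=0$ on the axis. I would use the classical observation that at an interior positive maximum off the axis the usual sign conditions hold, and that the axis cannot be a positive maximum because $\Gamma$ vanishes there. To make this rigorous, apply the argument to $\Gamma-\epsilon t$, which has a strict sign. A secondary point is justifying that the singular set at time $0$ avoids some sphere, which I would secure via the CKN measure estimate as above.
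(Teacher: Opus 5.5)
Your proof is correct and follows the paper's outline. You find a null set of singular radii at $t=0$ and a sphere avoiding it. Compactness gives an annulus on which $u$ is bounded, then derivative bounds there. Finally the maximum principle for $\Gamma$ on $B(R_*)\times(t_0,t_1)$ with $t_1\uparrow0$, where the axis is harmless because $\Gamma$ vanishes on it. Your preliminary step confining the singular set to the axis is valid but not needed: the paper simply projects $S_0$ under the Lipschitz map $x\mapsto|x|$.

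The one step where you use a genuinely different tool is the bound on spatial derivatives.
\begin{itemize}
\item The paper applies $\varepsilon$-regularity in the Seregin--\v{S}ver\'ak form, which requires the pressure quantity $D_\zeta(\rho)$ to be small. This forces the ordered choice of scales: first $\rho_0$ from $\sup|u|$ alone, then $\rho<\rho_0$, using the pressure decay lemma and the crude bound $D_\zeta(\rho_0)\le\rho_0^{-2}\norm{\pi}_{L^{3/2}(Q)}^{3/2}$. In return it gets the explicit bounds $|\nabla^k u|\le c_k\rho^{-k-1}$ directly.
\item Serrin's interior theorem works through the vorticity equation and never touches the pressure, so your route is more elementary here.
\end{itemize}
What you should state explicitly is that Serrin's bounds hold uniformly up to the top time $t=0$, not merely on compact subsets of the open cylinder. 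This is true: the heat-potential estimates propagate forward in time and depend only on the distance to the lateral boundary and to the initial time, since the top belongs to the parabolic interior. But it is exactly the uniformity the lemma asserts.

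Two small corrections to your Step~2 remarks.
\begin{itemize}
\item The $\RR^5$ lift does not apply to $\Gamma$. On axisymmetric functions, $\Delta-\frac2r\partial_r=\partial_r^2-\frac1r\partial_r+\partial_z^2$, while the five-dimensional Laplacian is $\Delta+\frac2r\partial_r$. That lift serves $\Omega$, or $\Gamma/r^2$ at the cost of a zeroth-order term $2u_r/r$, which spoils the clean maximum principle. The direct argument you chose is the right one.
\item $\Gamma-\epsilon t$ equals $\epsilon|t|$, not $0$, on the axis. So the perturbed argument gives $\sup(\Gamma-\epsilon t)\le\max\{\sup_{\text{parabolic boundary}}(\Gamma-\epsilon t),\,\epsilon|t_0|\}$, which is harmless after letting $\epsilon\to0$.
\end{itemize}
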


\begin{proof}[Proof of Lemma~\ref{lem:pointwise:local:circulation}]
Let $S_0\subset\overline{B(R_0)}$ be the set of $x$ for which $(x,0)$ is a singular point. By partial regularity~\cite{CKN,Lin98,LadyzhenskayaSeregin99}, $S_0$ has zero one-dimensional Hausdorff measure, so that its image under the Lipschitz map $x\mapsto|x|$ has zero Lebesgue measure and we may choose $R\in(R_1,R_0)$ for which $\partial B(R)$ carries no point of $S_0$. Every point $(x,0)$ with $x\in\partial B(R)$ is regular, so that by compactness of $\partial B(R)$ there are $\delta>0$, with $R_1<R-\delta<R+\delta<R_0$, and $t_0<0$ such that $u$ is bounded on $\{R-\delta<|x|<R+\delta\}\times(t_0,0)$. We fix $R-\delta<R_-<R_+<R+\delta$ and bound all spatial derivatives of $u$ on $\{R_-<|x|<R_+\}\times(t_0,0)$, uniformly up to time zero, after possibly increasing $t_0$. For this purpose, we apply the $\eps$-regularity theorem in the form stated by Seregin and \v{S}ver\'ak~\cite[Lemma~1.2]{SereginSverak05} (see also~\cite[Lemma~2.2 and Remark~2.3]{ESS03}), which asks that the two scale-invariant quantities
\[
C_\zeta(\rho):= \frac1{\rho^2}\int_{Q(\zeta,\rho)}|u|^3\,dx\,dt
\,,
\qquad
D_\zeta(\rho):=\frac1{\rho^2}\int_{Q(\zeta,\rho)}|\pi|^{3/2}\,dx\,dt
\,,
\]
add up to less than a universal constant $\eps_*>0$ at a single scale $\rho$; here we denote spacetime cylinders by $Q(\zeta,\rho):=B(x,\rho)\times(s-\rho^2,s)$, for $\zeta=(x,s)$. Two scales enter, and the order in which they are fixed is what makes the argument work: first $\rho_0$, using only the supremum of $|u|$ on the annulus, and then $\rho<\rho_0$, at the value of $D_\zeta(\rho_0)$ that the first choice determines.

Let $M:= \sup_{\{R-\delta<|x|<R+\delta\}\times(t_0,0)} |u|$, and let $\rho_0\in(0,1]$, with $\rho_0^2<-t_0$, be so small that $Q(\zeta,\rho_0)\subset\{R-\delta<|x|<R+\delta\}\times(t_0,0)$ for every $\zeta=(x,s)$ with $R_-\le|x|\le R_+$ and $t_0+\rho_0^2<s<0$. For such $\zeta$ and $0<\rho\le\rho_0$ the cylinder $Q(\zeta,\rho)$ lies in the region where $|u|\le M$, so that $C_\zeta(\rho)\le\frac{4\pi}3M^3\rho_0^3$. Using the pressure decay estimate of Seregin and \v{S}ver\'ak~\cite[Lemma~2.1]{SereginSverak05} (with $\alpha=\frac12$, and after a parabolic rescaling of $Q(\zeta,\rho_0)$ to a unit cylinder), we obtain that
$D_\zeta(\rho)\le c (\frac{\rho}{\rho_0})^{1/2} D_\zeta(\rho_0) +c M^3\rho_0^3$
for a universal constant $c>0$. Neither of the two terms $\frac{4\pi}3M^3\rho_0^3$ and $cM^3\rho_0^3$ depends on $\rho$, so that both are made small by the choice of $\rho_0$ alone, which depends only on $M$, on $c$, and on $\eps_*$; this is the first choice, and it fixes $\rho_0$. Once $\rho_0$ is fixed, $D_\zeta(\rho_0)$ is bounded uniformly in $\zeta$, since $D_\zeta(\rho_0)\le\rho_0^{-2}\norm{\pi}_{L^{3/2}(Q)}^{3/2}$, and the one term which remains, $c(\frac{\rho}{\rho_0})^{1/2}D_\zeta(\rho_0)$, is made small by letting $\rho<\rho_0$ be sufficiently small; this is the second choice. The two cannot be made in the other order, because the bound on $D_\zeta(\rho_0)$ degenerates as $\rho_0\to0$. We thus obtain $C_\zeta(\rho)+D_\zeta(\rho)<\eps_*$ for every $\zeta=(x,s)$ with $R_-\le|x|\le R_+$ and $t_0+\rho_0^2<s<0$. Therefore, by $\eps$-regularity we have $|\nabla^k u(x,s)|\le c_k\rho^{-k-1}$ for all $k\ge0$ with universal constants $c_k$, for all $R_-\le|x|\le R_+$, and all $t_0+\rho_0^2<s<0$; replacing $t_0$ by $t_0+\rho_0^2$ gives the desired bounds on the cylindrical annulus.

We fix $R_*\in(R_-,R_+)$. By~\eqref{eq:swirl:eqn}, the circulation $\Gamma$ satisfies a drift-diffusion equation with no zeroth order term, and it vanishes on the axis. For every $s<0$, the maximum principle on $\overline{B(R_*)}\times[t_0,s]$, where $\Gamma$ is smooth, bounds $|\Gamma|$ by its values at time $t_0$ and on $\partial B(R_*)\times(t_0,s)$. The former are bounded since $u$ is smooth at time $t_0$, the latter by $R_*$ times the bound on $u$ in the annulus; neither bound depends on $s$. Although the coefficient $2r^{-1}$ of $\p_r\Gamma$ in~\eqref{eq:swirl:eqn} is singular on the axis, $\Gamma$ vanishes there, so that a positive maximum or a negative minimum of $\Gamma$ which is not attained on the parabolic boundary is attained off the axis, where the classical argument applies. Letting $s\uparrow0$, we obtain~\eqref{eq:pointwise:circulation}.
\end{proof}

\subsection{Transport} 
In our proof we argue that certain limits of the vorticity obtained by zooming in solve in the weak sense a transport equation whose drift is itself only a weak-$*$ limit. For this purpose, we recall that for a divergence free drift which is bounded and Lipschitz in space on compact sets, with a Lipschitz constant which is locally integrable in time, a continuous distributional solution of the transport equation is constant along the characteristics of the drift. We state here a version adapted to our setting:
\begin{lemma}[{\bf Constancy along characteristics}]
\label{lem:pointwise:characteristics}
Let $\Ucal\subset\RR^3\times(-\infty,-1]$ be relatively open. Let $B\colon\Ucal\to\RR^3$ be bounded on compact sets, measurable in $\tau$, Lipschitz in $y$ on compact sets with a Lipschitz constant $\lambda(\tau)$ which is locally integrable in time, and divergence free in $y$ for almost every $\tau$. Let $\Lambda\colon\Ucal\to\RR$ be a continuous \emph{weak solution} of the transport equation $\partial_\tau \Lambda + \div_{\!\! y} ( B \Lambda) = 0$ in the \emph{interior} of $\Ucal$.\footnote{That is, 
$\int_{\Ucal}\Lambda\big(\p_\tau\varphi+B\cdot\nabla\varphi\big)\,dy\,d\tau=0$ for every $\varphi\in C^\infty_0(\mathrm{int}\,\Ucal)$.}
Let $X\colon(-\infty,-1]\to\RR^3$ be absolutely continuous on compact subintervals, with $\dot X(\tau)=B(X(\tau),\tau)$ for almost every $\tau$, and suppose that $\{(X(\tau),\tau)\colon\tau_0\le\tau\le-1\}$ is a compact subset of $\Ucal$ for every $\tau_0<-1$. Then $\tau\mapsto\Lambda(X(\tau),\tau)$ is constant on $(-\infty,-1]$.
\end{lemma}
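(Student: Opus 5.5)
The plan is to mollify $\Lambda$ in space only, use the DiPerna--Lions commutator lemma to show the mollified function nearly solves the transport equation along $X$, and then pass to the limit using the continuity of $\Lambda$.

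Fix $\tau_0<-1$ and set $K=\{(X(\tau),\tau)\colon\tau_0\le\tau\le-1\}$. Since $K$ is compact in the relatively open set $\Ucal$, there are $\delta>0$ and a compact set $K'\subset\Ucal$ such that $K'\supset\{(y,\tau)\colon |y-X(\tau)|\le 2\delta,\ \tau\in[\tau_0-\delta,-1]\}\cap(\RR^3\times(-\infty,-1])$. On $K'$ the drift $B$ is bounded, and it is Lipschitz in $y$ with constant $\lambda(\tau)\in L^1$. Let $\rho_\eps$ be a standard spatial mollifier and set $\Lambda_\eps=\Lambda*_y\rho_\eps$ for $\eps<\delta$.

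The first step is to derive the equation for $\Lambda_\eps$. Since $\div B=0$, the weak formulation gives, in the distributional sense in $\tau$ near $K$,
\[
\p_\tau\Lambda_\eps+B\cdot\nabla\Lambda_\eps=R_\eps,\qquad R_\eps:=B\cdot\nabla\Lambda_\eps-\big(\div(B\Lambda)\big)*\rho_\eps .
\]
By the DiPerna--Lions commutator estimate, $|R_\eps|\le C\lambda(\tau)\|\Lambda\|_{L^\infty(K')}$ uniformly in $\eps$. Moreover $R_\eps\to0$ in $L^1_{\mathrm{loc}}$ and, because $\Lambda$ is continuous, pointwise wherever it is defined. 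Since $\Lambda_\eps$ is smooth in $y$, the identity shows that $\p_\tau\Lambda_\eps$ is locally integrable in $\tau$. It follows that $\tau\mapsto\Lambda_\eps(X(\tau),\tau)$ is absolutely continuous, with derivative $(\p_\tau\Lambda_\eps+\dot X\cdot\nabla\Lambda_\eps)(X(\tau),\tau)=R_\eps(X(\tau),\tau)$ for a.e.\ $\tau$. Justifying this chain rule for a function that is only $W^{1,1}$ in time requires a further mollification in $\tau$, which is routine.

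The main obstacle is that $R_\eps\to0$ only almost everywhere in spacetime. The curve $X$ is a null set, so there is no direct control of $R_\eps$ along it. To handle this, I would average over a tube of characteristics. Let $\Phi_\tau(y)$ be the flow of $B$ starting at time $\tau_1$ from $y\in B(X(\tau_1),\eta)$. By Cauchy--Lipschitz it is well defined and stays inside $K'$ on $[\tau_1,\tau_2]$ for small $\eta$, thanks to the Gronwall bound $e^{\int\lambda}$. Because $\div B=0$, the flow is measure preserving. Integrating the identity over $y$ in the ball gives
\[
\int |\Lambda_\eps(\Phi_{\tau_2}(y),\tau_2)-\Lambda_\eps(y,\tau_1)|\,dy\le\int_{\tau_1}^{\tau_2}\!\!\int_{\Phi_\tau(B)}|R_\eps|\,dz\,d\tau\to0 .
\]
Letting $\eps\to0$ with continuity of $\Lambda$ shows that $\Lambda(\Phi_{\tau_2}(y),\tau_2)=\Lambda(y,\tau_1)$ for a.e.\ $y$, and hence for all $y$ by continuity of $\Lambda$ and of $\Phi$. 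Taking $y=X(\tau_1)$, where uniqueness of Lipschitz characteristics gives $\Phi_\tau(X(\tau_1))=X(\tau)$, yields $\Lambda(X(\tau_2),\tau_2)=\Lambda(X(\tau_1),\tau_1)$. Since $\tau_0$ was arbitrary, $\tau\mapsto\Lambda(X(\tau),\tau)$ is constant on $(-\infty,-1]$. One care point is at the endpoint $\tau=-1$, which lies on the boundary of $\mathrm{int}\,\Ucal$: there the conclusion follows by continuity of $\Lambda$ and $X$ as $\tau_2\uparrow-1$.
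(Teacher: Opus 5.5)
Your argument is correct, but it takes a longer route than the paper. The paper (whose stationary version is written out in Step~3 of the proof of Proposition~\ref{prop:civ}) uses the continuity of $\Lambda$ already inside the commutator. Subtracting $\Lambda(y,\tau)$ and using $\div B=0$, one writes
\[
R_\eps(y,\tau)=\int\big(\Lambda(\zeta,\tau)-\Lambda(y,\tau)\big)\big(B(y,\tau)-B(\zeta,\tau)\big)\cdot\nabla\rho_\eps(y-\zeta)\,d\zeta \,,
\]
so that $|R_\eps(y,\tau)|\le C\lambda(\tau)\nu(\eps)$ for almost every $\tau$ and every $y$ near $X(\tau)$. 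Here $\nu$ is the modulus of continuity of $\Lambda$ on a compact neighborhood of the graph of $X$. Integrating the mollified equation along the single curve $X$ then gives $|\Lambda_\eps(X(\tau_2),\tau_2)-\Lambda_\eps(X(\tau_1),\tau_1)|\le C\nu(\eps)\int_{\tau_1}^{\tau_2}\lambda$, and one lets $\eps\to0$.

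In particular, the ``main obstacle'' you identify is not really there. You note yourself that $R_\eps\to0$ pointwise at every $y$, for almost every $\tau$. Combined with your dominating bound $C\lambda(\tau)\norm{\Lambda}_{L^\infty(K')}$, this already gives $\int_{\tau_1}^{\tau_2}R_\eps(X(\tau),\tau)\,d\tau\to0$ by dominated convergence in $\tau$, since the curve meets each time slice in exactly one point. That the curve is a spacetime null set is irrelevant.

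Your tube-of-characteristics argument is the standard Lagrangian argument for merely bounded solutions, where the commutator tends to zero only in $L^1_{\mathrm{loc}}$. It is valid here, but it needs several extra ingredients:
\begin{itemize}
\item the flow map and its continuous dependence on initial data;
\item measure preservation of the flow (Liouville, from $\div B=0$);
\item uniqueness of characteristics, to identify $\Phi_\tau(X(\tau_1))=X(\tau)$.
\end{itemize}
The paper's route needs none of these and does not use uniqueness of $X$. What your route buys is robustness: it invokes the continuity of $\Lambda$ only at the very last step, so the same computation yields constancy along almost every characteristic even for an $L^\infty$ solution.
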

The proof of the above result is standard: with the notation of Lemma~\ref{lem:pointwise:characteristics}, it mollifies the solution in space, bounds the commutator of DiPerna and Lions~\cite[Lemma~II.1]{DiPernaLions89} uniformly on compact sets by a constant multiple of $\lambda(\tau)$ times the modulus of continuity of $\Lambda$ at the mollification scale (this uses the continuity of $\Lambda$), and integrates the mollified equation in time along $X$. This commutator bound is written out, in a stationary setting, in Step~3 of the proof of Proposition~\ref{prop:civ}. We omit further details.
 
\section{Ancient limits at the Euler length}
\label{sec:pointwise:ancient}
In this section we construct ancient limits at the Euler length, and  use them to prove that on a fixed ball the meridional gradient becomes small compared with the scale-invariant bound $C_1/(-t)$, which our hypotheses give directly, see~\eqref{eq:pointwise:meridional:bounded} below. In Section~\ref{sec:pointwise:regularity} we show that this smallness implies regularity, and thus Theorem~\ref{thm:pointwise:variable}.

\begin{proposition}[{\bf Smallness of the meridional gradient}]
\label{prop:pointwise:expanding}
Let $\ell$ be an Euler length, in the sense of Definition~\ref{def:Euler:length}, and assume that the bounds~\eqref{eq:pointwise:velocity} hold for all $t\in(-1,0)$. Then
\begin{align}
\lim_{t\uparrow0}(-t)\norm{\nabla b(\cdot,t)}_{L^\infty(B(1/2))}=0
\,.
\label{eq:pointwise:expanding:conclusion}
\end{align}
\end{proposition}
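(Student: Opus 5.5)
We argue by contradiction, following the outline in Section~\ref{sec:intro:ideas}. First, interpolating between the two bounds in~\eqref{eq:pointwise:velocity} on balls of radius $\ell(t)$, we obtain the scale-invariant bound
\begin{align}
(-t)\norm{\nabla b(\cdot,t)}_{L^\infty(B(3/4))}\le C_1
\label{eq:proposal:C1}
\end{align}
for $t$ close to $0$, using Lemma~\ref{lem:pointwise}(b) to pass from $u$ to $b$. Suppose now that~\eqref{eq:pointwise:expanding:conclusion} fails. Then there are $\delta>0$, times $t_n\uparrow0$, and points $x_n\in B(1/2)$ with $(-t_n)|\nabla b(x_n,t_n)|\ge\delta$. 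We rescale at the Euler length, setting $\ell_n=\ell(t_n)$ and
\[
U_n(y,\tau)=\frac{(-t_n)}{\ell_n}\,u\big(x_n'+\ell_n y,\,(-t_n)|\tau|\cdot(-1)\big),\qquad \tau\in[-T_n,-1],
\]
where $x_n'$ is the projection of $x_n$ on the horizontal plane containing it, shifted so as to keep the axis structure. The distance $r_n/\ell_n$ of $x_n$ from the axis either stays bounded along a subsequence (finite-axis case), or tends to infinity (receding-axis case). Monotonicity and doubling of $\ell$ give $\ell(t_n\tau)\approx\ell_n$ for $\tau$ in compact sets, with constants depending on $D$ and $\tau$. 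Hence $U_n$ is bounded by $C|\tau|^{-1}$ (up to doubling factors), $\nabla^2U_n$ is bounded by $C/|\tau|$, and, by~\eqref{eq:proposal:C1}, the meridional part $B_n$ is Lipschitz with constant $C_1/|\tau|$ on expanding domains. The rescaled equations~\eqref{eq:vort:theta}--\eqref{eq:Omega:eqn} carry viscosity $(-t_n)/\ell_n^2\to0$ by~\eqref{visczero}. By~\eqref{eq:pointwise:circulation}, the rescaled circulation is bounded by $C_\Gamma(-t_n)/\ell_n^2\to0$, so the swirl source term vanishes in the limit.

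We then pass to the limit. By Arzel\`a--Ascoli, using the $C^2$ bounds, $B_n\to B$ in $C^1_{loc}$ in space, locally uniformly, and weak-$*$ in time. The rescaled azimuthal vorticity converges locally uniformly by Lemma~\ref{lem:pointwise}(b), together with a time-equicontinuity obtained from the equation tested against smooth functions. The limiting divergence-free field $B$ is Lipschitz with constant $C_1/|\tau|$. In the finite-axis case, the rescaled potential vorticity is bounded by $C/|\tau|$ by Lemma~\ref{lem:pointwise}(a), and it converges to a continuous weak solution $\Lambda$ of $\p_\tau\Lambda+\div(B\Lambda)=0$. In the receding-axis case, the quantity $\omega_\theta$ itself satisfies the limiting transport equation, since $u_r/r$ and the curvature terms vanish as $r\to\infty$ after scaling; there we use $\Lambda=\omega_\theta$, which is bounded by $C/|\tau|$ times a factor accounting for $r/\ell_n$. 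For a fixed point $y$ at $\tau=-1$, we follow the backward characteristic $X$. Since $|B|\le C/|\tau|$ (up to the doubling factors), it grows at most logarithmically, or polynomially in $|\tau|$ after accounting for doubling, so it stays in compact sets over compact time intervals. Lemma~\ref{lem:pointwise:characteristics} then gives $\Lambda(y,-1)=\Lambda(X(\tau),\tau)$, and letting $\tau\to-\infty$ with $|\Lambda|\le C/|\tau|$ forces $\Lambda(y,-1)=0$. In the finite-axis case this requires the potential vorticity bound $C/|\tau|$ at the point $X(\tau)$, which holds uniformly in position. In the receding case we need $|\omega_\theta|\le C/|\tau|$ at the limit, which comes from the rescaled bound on $\nabla U$ via interpolation.

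Consequently, $\curl B(\cdot,-1)=0$ and $\div B=0$, with $B$ bounded on $\RR^3$ (in the receding case, on a half-space that exhausts $\RR^3$). By Liouville, $B(\cdot,-1)$ is constant, so $\nabla B(0,-1)=0$. This contradicts $|\nabla B_n(0,-1)|\ge\delta$, which passes to the limit by $C^1_{loc}$ convergence. We expect the main obstacle to be the uniform-in-$\tau$ control needed as $\tau\to-\infty$: the doubling property only gives $\ell(t_n\tau)/\ell_n$ bounded by a power of $|\tau|$, so the decay $C/|\tau|$ of $\Lambda$ must dominate these losses. This is clean if the bounds are stated directly in terms of $\ell(t)$ at the actual time; we would try first to carry the exact factors $\ell(t_n\tau)/\ell_n\le1$ (which follows from monotonicity, since $t_n\tau<t_n$ means $\ell$ is larger, so the factor is in fact $\ge1$). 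Handling this direction carefully, together with the time regularity of the weak limits in the receding-axis case, is the delicate point.
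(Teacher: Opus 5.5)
Your outline follows the paper's proof: the contradiction sequence, zoom at $\ell(t_n)$ in space and $|t_n|$ in time, the finite-/receding-axis dichotomy, and the circulation made subcritical by \eqref{visczero}. It also shares the compactness of the rescaled azimuthal vorticity, the transport of the potential vorticity (finite axis) or of $\omega_\theta$ (receding axis) along backward characteristics with decay $C/|\tau|$, and Liouville at $\tau=-1$. Your closing remark that $L_n(\tau)\ge1$ is what removes the doubling losses you worried about; the velocity bound, which carries $L_n$ in the numerator, is only needed on compact time intervals.

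\textbf{Missing step: the characteristics must stay off the axis.} In the finite-axis case, $\Lambda=\Theta/|y'|$ is continuous, and the weak transport equation is obtained, only off the axis. The singular coefficient $\frac{2}{|y'|}\p_{|y'|}$ and the swirl source $\p_z(|Z_n|^2/|y'|^2)$ are removed using test functions supported away from the axis, and $\Theta/|y'|$ need not be continuous there. So Lemma~\ref{lem:pointwise:characteristics} must be applied on $\Ucal=\Ocal\cap\{y'\neq0\}$. This requires the graph of $X$ over every $[\tau_0,-1]$ to be a compact subset of $\Ucal$. Your displacement bound only keeps $X$ bounded in $\RR^3$; it does not rule out $X$ reaching the axis, where constancy of $\Lambda$ along $X$ is not available. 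The fix uses that the limit drift $B$ is axisymmetric, swirl free, and Lipschitz with constant $C_1/|\tau|$. Its radial component vanishes on the axis, so $|B_r(y,\tau)|\le C_1|y'|/|\tau|$. Gr\"onwall then shows that the distance $\varrho(\tau)$ from $X(\tau)$ to the axis satisfies $\varrho(\tau)\ge\varrho(-1)\,|\tau|^{-C_1}$, so the graph stays in a compact subset of $\Ucal$. Equivalently, the axis is invariant under the flow, and uniqueness keeps $X$ off it.

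\textbf{Needs repair: the convergence of $B_n$ and the slice $\tau=-1$.} Your convergence statement for $B_n$ is too strong if meant in space-time. After a zoom at a length larger than the parabolic one, the rescaled pressure is not controlled, so there is no uniform bound on $\p_\tau B_n$. For example, your bounds do not exclude a spatially constant axial drift oscillating in $\tau$, balanced by a pressure linear in $z$. Only $\p_\tau\curl B_n$ is controlled, in a negative Sobolev norm, by taking the curl of the meridional momentum equation. That bound, combined with the uniform Lipschitz bound in $y$, is what gives locally uniform convergence $\Theta_n\to\Theta$.

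The drift $B$ is therefore only a weak-$*$ limit, defined up to null sets, and $B(\cdot,-1)$ has no meaning. You must separately extract $B^*=\lim B_n(\cdot,-1)$ in $C^1_{\mathrm{loc}}$, using the $C^{1,1}$ bounds at the single slice $\tau=-1$, where $L_n(-1)=1$ and hence $|B^*|\le C_0$. Then identify $|\curl B^*|=|\Theta(\cdot,-1)|$ through the uniform convergence of $\Theta_n$ on compact subsets of $\Ocal$, which include that slice. Only then do the remaining steps go through: $\Theta(\cdot,-1)=0$ gives $\curl B^*=0$, Liouville gives $\nabla B^*\equiv0$, and the $C^1$ convergence at the image of $x_n$ gives the contradiction.

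With these two repairs your argument coincides with the paper's.
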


We prove Proposition~\ref{prop:pointwise:expanding} by contradiction. If~\eqref{eq:pointwise:expanding:conclusion} fails, there are times $t_n\uparrow0$ and points $x_n\in B(1/2)$ at which $(-t_n)|\nabla b(x_n,t_n)|$ stays bounded away from zero. We zoom in at time $t_n$ about a base point built from $x_n$, in space using the Euler length $\ell(t_n)$, and in time using $|t_n|$ itself. Our goal is to analyze the ancient limits of the rescaled solution. A priori these limits are not known to solve the Euler equations in their momentum form: at a length larger than the parabolic one the pressure is not controlled after the zoom-in, so that we cannot pass to the limit in the momentum equation. Instead, we pass to the limit in the  vorticity equation.

Two facts make analyzing the limit tractable, and both were described in Section~\ref{sec:intro:ideas}: by Lemma~\ref{lem:pointwise:local:circulation} the circulation is bounded on a fixed ball, and a bounded circulation is subcritical for this rescaling by~\eqref{visczero}, so that the swirl of every limit vanishes together with the swirl source $\p_z(u_\theta^2/r^2)$ of~\eqref{eq:Omega:eqn}; and Lemma~\ref{lem:pointwise:characteristics} makes the limiting vorticity constant along the backward characteristics of the limiting drift, so that the bound $C/|\tau|$ it inherits from~\eqref{eq:pointwise:velocity} forces it to vanish. The two eliminations, first of the circulation and then of the potential vorticity, each along backward meridional trajectories, are the main idea of the proof of~\cite[Theorem~4.5]{CIV26}. Here the circulation is eliminated instead by the rescaling, as observed for the ancient limits of an Euler scaling by Seregin~\cite[Section~2]{Seregin24}. What is new here is that the potential vorticity is transported along the trajectories of a time-dependent field which is only a weak-$*$ limit, for which the transport equation holds only weakly and only off the axis (we keep these trajectories off the axis through a quantitative lower bound on their distance to it), and that the potential vorticity vanishes because of the bound $C/|\tau|$ inherited from~\eqref{eq:pointwise:velocity}, rather than because of an exponential weight as in~\cite{CIV26}.

The main result of the construction is the pair of Lemmas~\ref{lem:ancient:axis} and~\ref{lem:ancient:receding}: along a subsequence, the rescaled azimuthal vorticity converges locally uniformly to a limit which vanishes at $\tau=-1$, and the rescaled meridional velocity at $\tau=-1$ converges in $C^1$ on compact sets to a field which is curl free, divergence free, and bounded on all of $\RR^3$. Thus each component is bounded and harmonic, hence constant by Liouville's theorem. The selection of the points $x_n$ forces the gradient of this limit to be bounded away from zero at one point, which provides the desired contradiction. In Section~\ref{sec:pointwise:zoom} we define the zoom-in and record the bounds it inherits from~\eqref{eq:pointwise:velocity}, in Section~\ref{sec:pointwise:limits} we prove the two lemmas, and in Section~\ref{sec:pointwise:applications} we select the sequences $t_n$ and $x_n$ and complete the proof of Proposition~\ref{prop:pointwise:expanding}. 

\subsection{The rescaled fields and bounds uniform in the zoom-in}
\label{sec:pointwise:zoom}

Throughout this subsection we assume that $\ell$ is an Euler length and that the bounds in~\eqref{eq:pointwise:velocity} hold at every $t\in(-1,0)$. We take $T_1\in(0,1]$ so small that
\begin{align}
\ell(t)\le\frac{1}{4}
\qquad(-T_1<t<0)
\,.
\label{eq:pointwise:T:one}
\end{align}
Applying Taylor's formula along segments of length $\ell(t)$ inside $B(1)$, we deduce from~\eqref{eq:pointwise:velocity},~\eqref{eq:pointwise:T:one}, and Lemma~\ref{lem:pointwise}(b) that for $t \in (-T_1,0)$ 
\begin{align}
\norm{\nabla b(\cdot,t)}_{L^\infty(B(3/4))}\le\frac{C_1}{(-t)}
\,,\qquad
C_1:=\sqrt3\Big(2C_0+\frac{C_2}2\Big)
\,.
\label{eq:pointwise:meridional:bounded}
\end{align}
Note that the length scale $\ell$ cancels in this balance, so that the two hypotheses in~\eqref{eq:pointwise:velocity} interpolate to a gradient of the scale-invariant size $1/(-t)$, whatever $\ell$ is. Lemma~\ref{lem:pointwise}(a) in addition gives
\begin{align}
\norm{\Omega(\cdot,t)}_{L^\infty(B(1))}\le\frac{\sqrt2\,C_2}{(-t)\ell(t)}
\,.
\label{eq:pointwise:quotient}
\end{align}

We consider a sequence $t_n\uparrow0$ with $t_n\in(-T_1,0)$, and we let 
\[
\ell_n:=\ell(t_n) \,, \qquad \eps_n:=\frac{(-t_n)}{\ell_n^2} \,, 
\]
so that $\eps_n\to0$ by~\eqref{visczero}. We also let $a_n\in\RR^3$ be such that $|a_n|<\frac12$; we call $a_n$ the \emph{base point} of the zoom, since~\eqref{eq:pointwise:fixed:zoom} below rescales about the  point $a_n$. The sequences $\{t_n\}_{n\geq 1}$ and $\{a_n\}_{n\geq 1}$ are arbitrary both here and in Section~\ref{sec:pointwise:limits}; only in the proof of Proposition~\ref{prop:pointwise:expanding} do we build them from times and points at which the contradiction ansatz~\eqref{eq:pointwise:selected} holds. 

Upon recalling the decomposition $u = b+w$ from~\eqref{eq:axi:b:w:def}, we define
\begin{subequations}
\label{eq:pointwise:fixed:zoom}
\begin{align}
B_n(y,\tau)&=\frac{(-t_n)}{\ell_n}\,b\big(a_n+\ell_ny,\,t_n|\tau|\big)\,,
\\
Z_n(y,\tau)&=\frac{(-t_n)}{\ell_n}\,w\big(a_n+\ell_ny,\,t_n|\tau|\big)\,,
\\
P_n(y,\tau)&=\frac{(-t_n)^2}{\ell_n^2}\,\pi\big(a_n+\ell_ny,\,t_n|\tau|\big)
\,,\\
\Theta_n(y,\tau)&=(-t_n)\,\omega_\theta\big(a_n+\ell_ny,t_n|\tau|\big)
\,,
\label{eq:pointwise:Omega:n:def}
\end{align}
\end{subequations}
for all $\tau<0$ with $(-t_n)|\tau|<1$ and all $y\in\Dcal_n$, where
\begin{align*}
\Dcal_n:=\{y\colon a_n+\ell_ny\in B(1)\}=B\big(-\tfrac{a_n}{\ell_n},\tfrac{1}{\ell_n}\big)
\,.
\end{align*}
These are exactly the $(y,\tau)$ for which $a_n+\ell_ny\in B(1)$ and $t_n|\tau|\in(-1,0)$, so that the right sides of~\eqref{eq:pointwise:fixed:zoom} are well-defined. We also introduce a smaller ball $\Dcal_n^\flat\subset\Dcal_n$, defined as 
\begin{equation*}
\Dcal_n^\flat:=\{y\colon a_n+\ell_ny\in B(3/4)\}=B\big(-\tfrac{a_n}{\ell_n},\tfrac{3}{4\ell_n}\big) \,.
\end{equation*}
See Figure~\ref{fig:pointwise:domains}.
This smaller ball plays no role in definition~\eqref{eq:pointwise:fixed:zoom}; instead, this is the set on which the gradient bound~\eqref{eq:pointwise:meridional:bounded} is available after the zoom (see~\eqref{eq:pointwise:zoom:bounds:c} below). Since $|a_n|<\frac12$, we have $\Dcal_n^\flat\supset B(\rho_n)$ with $\rho_n:=\frac{1}{4\ell_n}\to\infty$, so that every compact subset of $\RR^3$ is contained in $\Dcal_n^\flat$ for all $n$ sufficiently large.
\begin{figure}[htb!]
\centering
\begin{tikzpicture}[scale=1.15]
%%% (a) the finite-axis case: $a_n$ is the foot of $x_n$ on the axis, so $y=0$ lies on the rescaled axis
\fill[blue!6]  (0,0) circle (3.2);
\fill[blue!16] (0,0) circle (2.4);
\draw[blue!60!black,thick] (0,0) circle (3.2);
\draw[red!65!black,thick]  (0,0) circle (2.4);
\draw[black,dashed] (0,-3.6) -- (0,3.22);
\node[font=\scriptsize,anchor=south,inner sep=1.5pt] at (0,3.26) {$\{y'=0\}$};
\node[font=\scriptsize,anchor=south east,inner sep=1.5pt] at (-2.02,2.49) {$\Dcal_n$};
\draw[thin] (210:2.4) -- (210:2.08);
\node[font=\scriptsize,anchor=west,inner sep=1.5pt,fill=white] at (210:2.05) {$\Dcal_n^\flat$};
\draw[->,thin] (0,0) -- (-3.2,0);
\node[font=\scriptsize,inner sep=1.5pt,fill=white] at (-1.5,0.17) {$1/\ell_n$};
\draw[->,thin] (0,0) -- (2.4,0);
\node[font=\scriptsize,inner sep=1.5pt,fill=white] at (1.2,0.17) {$3/(4\ell_n)$};
\draw[thick] (-0.09,-0.09) -- (0.09,0.09);
\draw[thick] (-0.09,0.09) -- (0.09,-0.09);
\draw[thin] (0.13,0.43) -- (0.06,0.12);
\node[font=\scriptsize,anchor=south west,inner sep=1.5pt] at (0.11,0.42) {$-a_n/\ell_n$};
\draw[<->,thin] (0,-0.06) -- (0,-1.44);
\node[font=\scriptsize,anchor=west,inner sep=2pt,fill=white] at (0.10,-0.55) {$|a_n|/\ell_n$};
\draw[green!45!black,thick,densely dotted] (0,-1.5) circle (0.8);
\draw[thin] (-0.566,-2.066) -- (-1.10,-2.16);
\node[font=\scriptsize,anchor=east,inner sep=1.5pt,fill=white] at (-1.14,-2.16) {$B(\rho_n)$};
\filldraw (0,-1.5) circle (1.3pt);
\node[font=\scriptsize,anchor=east,inner sep=2.5pt] at (-0.10,-1.5) {$y=0$};
\filldraw (0.45,-1.5) circle (1.3pt);
\node[font=\scriptsize,anchor=north,inner sep=2.5pt] at (0.47,-1.62) {$y_n^*$};
\node[font=\footnotesize,anchor=north] at (0,-3.72) {(a) finite axis};
\end{tikzpicture}
\hspace{0.3cm}
\begin{tikzpicture}[scale=1.15]
%%% (b) the receding-axis case: $a_n=x_n$, so $y=0$ is the image of the selected point
\fill[blue!6]  (0,0) circle (3.2);
\fill[blue!16] (0,0) circle (2.4);
\draw[blue!60!black,thick] (0,0) circle (3.2);
\draw[red!65!black,thick]  (0,0) circle (2.4);
\draw[black,dashed] (0,-3.6) -- (0,3.22);
\node[font=\scriptsize,anchor=south,inner sep=1.5pt] at (0,3.26) {$\{y'=-a_n'/\ell_n\}$};
\node[font=\scriptsize,anchor=south east,inner sep=1.5pt] at (-2.02,2.49) {$\Dcal_n$};
\draw[thin] (210:2.4) -- (210:2.08);
\node[font=\scriptsize,anchor=west,inner sep=1.5pt,fill=white] at (210:2.05) {$\Dcal_n^\flat$};
\draw[->,thin] (0,0) -- (-3.2,0);
\node[font=\scriptsize,inner sep=1.5pt,fill=white] at (-1.5,0.17) {$1/\ell_n$};
\draw[->,thin] (0,0) -- (2.4,0);
\node[font=\scriptsize,inner sep=1.5pt,fill=white] at (1.2,0.17) {$3/(4\ell_n)$};
\draw[thick] (-0.09,-0.09) -- (0.09,0.09);
\draw[thick] (-0.09,0.09) -- (0.09,-0.09);
\draw[thin] (0.13,-0.41) -- (0.06,-0.12);
\node[font=\scriptsize,anchor=north west,inner sep=1.5pt] at (0.11,-0.40) {$-a_n/\ell_n$};
\draw[<->,thin] (0,1.1) -- (1.02,1.1);
\node[font=\scriptsize,anchor=east,inner sep=2.5pt,fill=white] at (-0.10,1.1) {$d_n=r_n/\ell_n$};
\draw[green!45!black,thick,densely dotted] (1.1,1.1) circle (0.8);
\node[font=\scriptsize,anchor=north,inner sep=2.5pt] at (1.1,0.95) {$B(\rho_n)$};
\filldraw (1.1,1.1) circle (1.3pt);
\node[font=\scriptsize,anchor=south,inner sep=2.5pt] at (1.1,1.18) {$y=y_n^*=0$};
\node[font=\footnotesize,anchor=north] at (0,-3.72) {(b) receding axis};
\end{tikzpicture}

\vspace{0.1cm}
\begin{tikzpicture}
\fill[blue!16] (0,0) rectangle (0.40,0.26);
\draw[red!65!black] (0,0) rectangle (0.40,0.26);
\node[font=\footnotesize,anchor=west] at (0.52,0.13) {$\Dcal_n^\flat$: all of the bounds~\eqref{eq:pointwise:zoom:bounds:a}--\eqref{eq:pointwise:zoom:bounds:e}, on compact sets and for $n$ large};
\fill[blue!6] (0,-0.42) rectangle (0.40,-0.16);
\draw[blue!60!black] (0,-0.42) rectangle (0.40,-0.16);
\node[font=\footnotesize,anchor=west] at (0.52,-0.29) {$\Dcal_n\setminus\Dcal_n^\flat$: the bounds~\eqref{eq:pointwise:zoom:bounds:a},~\eqref{eq:pointwise:zoom:bounds:b}, and~\eqref{eq:pointwise:zoom:bounds:d}};
\end{tikzpicture}

\vspace{-0.1cm}
\caption{\footnotesize The domains $\Dcal_n$ and $\Dcal_n^\flat$ are centered at $-a_n/\ell_n$, with radii $1/\ell_n$ and $3/(4\ell_n)$. The dashed line is the rescaled axis. In (a), $y=0$ lies on the axis and the image $y_n^*$ of the selected point, defined in Section~\ref{sec:pointwise:applications}, remains at bounded distance from it. In (b), $y_n^*=0$ and the axis recedes. The dotted ball $B(\rho_n)$ is centered at $y=0$, and its radius $\rho_n=1/(4\ell_n)$ tends to infinity.}
\label{fig:pointwise:domains}
\end{figure}

With the definitions in~\eqref{eq:pointwise:fixed:zoom}, we return to~\eqref{eq:meridional:nse} and obtain that 
\begin{subequations}
\label{eq:pointwise:meridional:equation}
\begin{align}
\p_\tau B_n+\div(B_n\otimes B_n)+\nabla P_n
&=\eps_n\Delta B_n-\div(Z_n\otimes Z_n)
\,,
\\
\div B_n &=0
\,,
\end{align}
\end{subequations}
holds on $\Dcal_n$.
We set 
\[
L_n(\tau)= \frac{\ell(t_n|\tau|)}{\ell_n} = \frac{\ell(t_n|\tau|)}{\ell(t_n)} \,,
\]
so that $L_n(-1)=1$. For $\tau\le-1$, the monotonicity of $\ell$ gives $L_n(\tau)\ge1$, and the doubling condition in~\eqref{eq:pointwise:length:doubling} gives
\begin{align}
1\leq L_n(\tau)\leq C_T:=D^{\lceil\log_2T\rceil}
\,,
\qquad
\mbox{for all}
\qquad \tau \in [-T,-1]
\,,
\label{eq:pointwise:length:ratios}
\end{align}
once $2^{\lceil\log_2T\rceil}(-t_n)<1$, which is what the iteration requires.

We now change variables in~\eqref{eq:pointwise:velocity},~\eqref{eq:pointwise:circulation},~\eqref{eq:pointwise:meridional:bounded}, and in~\eqref{eq:pointwise:quotient}, and we obtain, for every compact set of $(y,\tau)$ with $\tau\le-1$ and all $n$ large depending on it,
\begin{subequations}
\label{eq:pointwise:zoom:bounds}
\begin{align}
\max\big\{|B_n(y,\tau)|,|Z_n(y,\tau)|\big\}&\leq \frac{C_0L_n(\tau)}{|\tau|}\,,
  &&y\in\Dcal_n\,,
\label{eq:pointwise:zoom:bounds:a}
\\
|\nabla^2B_n(y,\tau)|&\le\frac{C_2}{|\tau|L_n(\tau)}\le\frac{C_2}{|\tau|}\,,
  &&y\in\Dcal_n\,,
\label{eq:pointwise:zoom:bounds:b}
\\
|\nabla B_n(y,\tau)|&\le\frac{C_1}{|\tau|}\,,
  &&y\in\Dcal_n^\flat\,,
\label{eq:pointwise:zoom:bounds:c}
\\
|\Omega_n(y,\tau)|&\le\frac{\sqrt2C_2}{|\tau|L_n(\tau)}\le\frac{\sqrt2C_2}{|\tau|}\,,
  &&y\in\Dcal_n\,,
\label{eq:pointwise:zoom:bounds:d}
\\
R_n(y)\,|Z_n(y,\tau)|&\leq C_\Gamma\eps_n\,,
  &&y\in\Dcal_n^\flat\,,
\label{eq:pointwise:zoom:bounds:e}
\end{align}
\end{subequations}
where 
\[
R_n(y):= \frac{|a_n'+\ell_ny'|}{\ell_n}
\] 
is the rescaled distance to the original axis, and we have defined 
\begin{align}
\Omega_n(y,\tau)&:=\frac{\Theta_n(y,\tau)}{R_n(y)}=(-t_n)\ell_n\,\Omega\big(a_n+\ell_ny,t_n|\tau|\big)\,.
\end{align}
Since $R_n(y)|Z_n(y,\tau)|=\eps_n|\Gamma(a_n+\ell_ny,t_n|\tau|)|$, 
the bound~\eqref{eq:pointwise:zoom:bounds:e} is precisely estimate~\eqref{eq:pointwise:circulation}. Here Lemma~\ref{lem:pointwise:local:circulation} is applied with $R_1=3/4$, some $R_0\in(3/4,1)$, $R_*\in(R_-,R_+)$, and $t_0<0$, so that $a_n+\ell_ny\in B(3/4)\subset B(R_*)$ for all $y\in\Dcal_n^\flat$; the time $t_n|\tau|$ lies in $(t_0,0)$ for all $n$ large depending on $\tau$, because $t_n\to0$.

We note that the base points $a_n$ occur in two configurations, according to their position relative to the axis of symmetry: 
\begin{itemize}[leftmargin=2em]
\item In the \emph{finite-axis} configuration $a_n$ lies on the axis, so that $R_n(y)=|y'|$. The quotient $\Omega_n=(\curl B_n)_\theta/|y'|$ is the potential vorticity of $B_n$ with respect to the rescaled axis $\{y'=0\}$. 
\item In the \emph{receding-axis} configuration $d_n:=r_n/\ell_n\to\infty$, where $r_n:=|a_n'|$. Then $R_n(y)\ge d_n-|y'|$, and we do not use the quotient $\Omega_n$. After a rotation about the axis, we identify $a_n'=(r_n,0)$.
\end{itemize}

\subsection{Two ancient-limit lemmas}
\label{sec:pointwise:limits}
Lemmas~\ref{lem:ancient:axis} and~\ref{lem:ancient:receding} below are at the heart of the limit argument. They differ in whether the axis is present in the limit, and in which scalar we transport; everything else is common to them. 

\begin{lemma}[{\bf Ancient limits, the axis present}]
\label{lem:ancient:axis}
Let $t_n$, $a_n$, and the fields~\eqref{eq:pointwise:fixed:zoom} be as in Section~\ref{sec:pointwise:zoom}, with every $a_n$ lying on the axis of symmetry. Define $\Ocal:=\RR^3\times(-\infty,-1]$, and let $\bar L\colon(-\infty,-1]\to[1,\infty)$ be non-decreasing in $|\tau|$, bounded on bounded sets, and such that
\begin{align}
\limsup_{n\to\infty}\,\sup_{-T\le\tau\le-1}\frac{L_n(\tau)}{\bar L(\tau)}\le1
\qquad\mbox{for every }T>1
\,.
\label{eq:ancient:Lbar}
\end{align}
Then, there exists a continuous $\Theta\colon \Ocal \to \RR$, with $|\Theta(y,\tau)|\le\sqrt2\,C_1/|\tau|$ on $\Ocal$, and
\begin{align}
\Theta(\cdot,-1)=0\qquad\mbox{on }\RR^3
\,,
\label{eq:ancient:terminal}
\end{align}
such that after passing to a subsequence we have
\begin{align}
\Theta_n\longrightarrow\Theta
\qquad\mbox{uniformly on compact subsets of }\Ocal
\,.
\label{eq:pointwise:Theta:convergence}
\end{align}
Moreover, along a further subsequence, $B_n(\cdot,-1)\to B^*$ in $C^1$ on compact subsets of $\RR^3$, where $B^*$ is divergence free, curl free, and bounded by $C_0$, so that each of its components is harmonic.
\end{lemma}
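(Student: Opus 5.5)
The plan is to extract the limits by Arzelà--Ascoli, pass to the limit in the weak form of the potential vorticity equation~\eqref{eq:Omega:eqn} off the axis, and then show $\Theta(\cdot,-1)=0$ by transporting $\Omega$ backward along characteristics with Lemma~\ref{lem:pointwise:characteristics}.

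\emph{Compactness of $\Theta_n$ and $B_n$.} Since $\curl B_n=\Theta_ne_\theta$, the bound~\eqref{eq:pointwise:zoom:bounds:c} gives $|\Theta_n|\le\sqrt2C_1/|\tau|$ on $\Dcal_n^\flat$. Lemma~\ref{lem:pointwise}(b) and~\eqref{eq:pointwise:zoom:bounds:b} make $\Theta_n$ Lipschitz in $y$ with constant $\sqrt2C_2/|\tau|$, uniformly in $n$. For equicontinuity in $\tau$ I would use the rescaled vorticity equation in weak form. The equation is~\eqref{eq:vort:theta} with viscosity $\eps_n$, drift $B_n$, stretching term $B_{n,r}\Theta_n/R_n$ (bounded by $C_1\cdot\sqrt2C_2/|\tau|^2$, because $B_{n,r}$ vanishes on the axis), and swirl source $R_n^{-1}\p_z(Z_n^2)$. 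Tested against $\varphi\in C^\infty_0$, this gives a modulus of continuity in time of $\langle\Theta_n(\tau),\varphi\rangle$ that is uniform in $n$. The swirl source is handled by writing it as $\p_z(Z_n^2/R_n^2)\cdot R_n$ and bounding it as in the next step. Combined with the uniform spatial Lipschitz bound, a standard interpolation converts this weak time-continuity into uniform equicontinuity of $\Theta_n$ on compact sets. A diagonal Arzelà--Ascoli argument then gives~\eqref{eq:pointwise:Theta:convergence} and the bound on $\Theta$.

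For the drift, $B_n$ is uniformly bounded and uniformly Lipschitz in $y$ on compact sets. Hence along a subsequence $B_n(\cdot,\tau)$ converges weakly-$*$ in $L^\infty_{\mathrm{loc}}$, and locally uniformly in $y$ for a.e.\ fixed $\tau$ after pairing in time. The limit $B$ is divergence free, Lipschitz in $y$ with constant $C_1/|\tau|$, and satisfies $|B|\le C_0\bar L(\tau)/|\tau|$ by~\eqref{eq:ancient:Lbar}. At the fixed time $\tau=-1$, the $C^2$ bound~\eqref{eq:pointwise:zoom:bounds:b} gives $C^1$ convergence $B_n(\cdot,-1)\to B^*$ on compact sets, with $|B^*|\le C_0$ since $L_n(-1)=1$.

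\emph{The limit transport equation.} Here $\Omega_n=\Theta_n/|y'|$ satisfies the rescaled~\eqref{eq:Omega:eqn}. I test it against $\varphi\in C^\infty_0$ supported off the axis $\{y'=0\}$; there $\Omega_n\to\Omega:=\Theta/|y'|$ locally uniformly.
\begin{itemize}
\item The viscous term $\eps_n\langle\Omega_n,(\Delta+\tfrac2r\p_r)^*\varphi\rangle$ tends to $0$, since $\Omega_n$ is bounded and $\varphi$ stays away from the axis.
\item The product $B_n\Omega_n$ converges, because $\Omega_n$ converges uniformly and $B_n$ converges weakly-$*$.
\item The swirl term is $\int Z_n^2R_n^{-2}\p_z\varphi$. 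The interpolation behind~\eqref{eq:pointwise:meridional:bounded}, applied to $w$, gives $|Z_n|/R_n\lesssim1/|\tau|$. Together with~\eqref{eq:pointwise:zoom:bounds:e},
\[
Z_n^2/R_n^2\lesssim\min\{1,\eps_n/R_n^2\}|\tau|^{-1},
\]
which tends to $0$ in $L^1_{\mathrm{loc}}$. This bound is also what controls the source in the time-equicontinuity step above, up to a factor $\eps_n\log(1/\eps_n)$ near the axis.
\end{itemize}
Hence $\p_\tau\Omega+\div(B\Omega)=0$ weakly on $\Ucal=\{y'\ne0\}\times(-\infty,-1]$.

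\emph{Backward characteristics and the conclusion.} Fix $y_0$ with $y_0'\neq0$ and solve $\dot X=B(X,\tau)$ backward from $X(-1)=y_0$; this is Carathéodory-solvable since $B$ is Lipschitz in $y$ with constant $C_1/|\tau|$. The crucial point is to keep $X$ off the axis. Since $B_r$ vanishes on the axis and is Lipschitz, $|\tfrac{d}{d\tau}|X'||\le C_1|X'|/|\tau|$, so $|X'(\tau)|\ge|y_0'|\,|\tau|^{-C_1}>0$. Also $|X(\tau)|\le|y_0|+\int C_0\bar L/|s|\,ds$ is bounded on compact time intervals. Therefore every finite segment of the trajectory is a compact subset of $\Ucal$, and Lemma~\ref{lem:pointwise:characteristics} yields
\[
\Omega(y_0,-1)=\Omega(X(\tau),\tau).
\]
By~\eqref{eq:pointwise:zoom:bounds:d}, $|\Omega(X(\tau),\tau)|\le\sqrt2C_2/|\tau|$, and letting $\tau\to-\infty$ gives $\Omega(y_0,-1)=0$. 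By continuity $\Theta(\cdot,-1)\equiv0$, which is~\eqref{eq:ancient:terminal}.

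Finally, $\curl B^*=\lim\curl B_n(\cdot,-1)=\Theta(\cdot,-1)e_\theta=0$ and $\div B^*=0$, so each component of $B^*$ is harmonic. I expect the main obstacle to be the time-equicontinuity of $\Theta_n$ together with the control of the swirl source near the rescaled axis, where only the $\min\{1,\eps_n/R_n^2\}$ bound is available.
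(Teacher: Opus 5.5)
Your proposal is correct and has the same architecture as the paper's proof:
\begin{itemize}
\item Arzel\`a--Ascoli for $\Theta_n$, and a weak-$*$ limit $B$ of the $B_n$;
\item the weak transport equation for $\Omega=\Theta/|y'|$ off the axis;
\item the Gr\"onwall bound $|X'(\tau)|\ge|y_0'|\,|\tau|^{-C_1}$, which keeps backward characteristics off the axis;
\item Lemma~\ref{lem:pointwise:characteristics} combined with the decay $\sqrt2C_2/|\tau|$;
\item $C^1$ convergence of $B_n(\cdot,-1)$ from the $C^{1,1}$ bound.
\end{itemize}

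The one genuine difference is how you obtain equicontinuity in $\tau$. The paper takes the curl of the rescaled meridional momentum equation~\eqref{eq:pointwise:meridional:equation}. The pressure drops out, and every nonlinear term, including the swirl term $\curl\div(Z_n\otimes Z_n)$, is two derivatives of a tensor bounded by~\eqref{eq:pointwise:zoom:bounds:a}. So $\p_\tau\curl B_n$ is bounded in $L^\infty_\tau W^{-2,p}$ with no analysis near the axis. This passes to $\Theta_n$ because $|\curl B_n|=|\Theta_n|$ and $e_\theta$ is independent of $\tau$.

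Your route through the scalar equation~\eqref{eq:vort:theta} also works, but it carries the singular coefficients $R_n^{-1}$ and $R_n^{-2}$. You must therefore do one of two things. Either check that its weak form holds across the axis; it does, since near the axis $\Theta_n=|y'|g$ with $g$ smooth, and $\Delta|y'|=1/|y'|$ in $\Dcal'(\RR^3)$. Or test only away from the axis and use $|\Theta_n|\le\sqrt2C_2R_n/|\tau|$ for equicontinuity near it.

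The obstacle you anticipate is not actually there. After integrating by parts in $z$, the swirl source contributes $-\int|Z_n|^2R_n^{-1}\p_z\varphi$. Since $|Z_n|^2/R_n\lesssim R_n^{-1}$ by~\eqref{eq:pointwise:zoom:bounds:a} alone, this is locally integrable in $\RR^3$. In the $\Omega_n$ equation, $R_n$ is bounded below on $\supp\varphi$, so~\eqref{eq:pointwise:zoom:bounds:e} alone kills the source, exactly as in the paper. Your extra bound $|Z_n|/R_n\lesssim1/|\tau|$ relies on the Hessian hypothesis applied to $w$. It is harmless here but unnecessary, and it would not be available under the H\"older hypotheses of Theorem~\ref{thm:sharp}; the paper's Step~1 adapts to those hypotheses directly.

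Finally, weak-$*$ convergence gives no convergence of $B_n(\cdot,\tau)$ at a fixed $\tau$; only time averages converge locally uniformly in $y$. This is harmless, since you only use the a.e.\ bounds on $B$ and $\nabla B$ and a continuous representative for a.e.\ $\tau$.
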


\begin{lemma}[{\bf Ancient limits, the axis receding}]
\label{lem:ancient:receding}
Let $t_n$, $a_n$, and the fields~\eqref{eq:pointwise:fixed:zoom} be as in Section~\ref{sec:pointwise:zoom}, with $a_n'=(r_n,0)$ and $d_n=r_n/\ell_n\to\infty$. Let $\Ocal$ and $\bar L$ be as in Lemma~\ref{lem:ancient:axis}. Then the conclusions of Lemma~\ref{lem:ancient:axis} all hold.
\end{lemma}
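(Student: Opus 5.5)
The plan is to run the argument of Lemma~\ref{lem:ancient:axis}, replacing the potential vorticity $\Omega_n$ by the rescaled azimuthal vorticity $\Theta_n$ itself. The point is that far from the axis the singular coefficients in~\eqref{eq:vort:theta} disappear after rescaling.

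\textbf{Compactness.} By~\eqref{eq:pointwise:zoom:bounds:a}--\eqref{eq:pointwise:zoom:bounds:c}, on compact subsets of $\Ocal$ and for $n$ large, $B_n(\cdot,\tau)$ is bounded in $C^{1,1}$, with Lipschitz constant of $\nabla B_n$ at most $C_2/|\tau|$. Hence $\Theta_n=(\curl B_n)_\theta$ is bounded by $\sqrt2C_1/|\tau|$ and Lipschitz in $y$; the frame $e_\theta$ converges smoothly to a constant vector on compact sets since $d_n\to\infty$.

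For equicontinuity in $\tau$ I would use the rescaled equation
\[
\p_\tau\Theta_n+B_n\cdot\nabla\Theta_n=\eps_n\Big(\Delta-\frac1{R_n^2}\Big)\Theta_n+\frac{(B_n)_r}{R_n}\Theta_n+\frac1{R_n}\p_z\big(|Z_n|^2\big),
\]
which follows from~\eqref{eq:vort:theta} with $\omega_\theta$ scaling like $1/(-t)$. All terms are handled as follows:
\begin{itemize}
\item $B_n\cdot\nabla\Theta_n$ is bounded.
\item $\eps_n\Delta\Theta_n$ is small in $H^{-1}$, since $\nabla\Theta_n$ is bounded.
\item $R_n\ge d_n-|y'|\to\infty$ kills the terms $\eps_n R_n^{-2}\Theta_n$ and $R_n^{-1}(B_n)_r\Theta_n$.
\item By~\eqref{eq:pointwise:zoom:bounds:e}, $|Z_n|\le C_\Gamma\eps_n/R_n\to0$ locally uniformly, so the swirl term $R_n^{-1}\p_z(|Z_n|^2)$ vanishes in distributions.
\end{itemize}
This gives a bound on $\p_\tau\Theta_n$ in a negative Sobolev space, uniform on compacts. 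Combined with the uniform spatial Lipschitz bound, an Aubin--Lions/Arzel\`a--Ascoli interpolation gives locally uniform convergence $\Theta_n\to\Theta$ along a subsequence, with $|\Theta|\le\sqrt2C_1/|\tau|$, and $\Theta$ Lipschitz in $y$ with constant $\lesssim C_2/|\tau|$. Simultaneously $B_n\to B$ weak-$*$ and locally uniformly in $y$ (Arzel\`a--Ascoli in $y$ only, weak-$*$ in $\tau$), with $B$ divergence free, $|B|\le C_0\bar L(\tau)/|\tau|$ by~\eqref{eq:ancient:Lbar}, and Lipschitz in $y$ with constant $C_1/|\tau|$. Products $B_n\Theta_n$ converge since $\Theta_n$ converges strongly. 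The limit $\Theta$ is therefore a continuous weak solution of $\p_\tau\Theta+\div(B\Theta)=0$ on all of $\RR^3\times(-\infty,-1)$, with no axis to avoid. Here the vector identity $\curl B=\Theta\,e_\theta^\infty$ for the constant limiting direction is used only at $\tau=-1$.

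\textbf{Transport to $\tau=-1$.} Fix $y_0\in\RR^3$ and solve $\dot X=B(X,\tau)$ backward from $X(-1)=y_0$. The field $B$ is Lipschitz with locally integrable constant, so by Carathéodory this has a unique solution on $(-\infty,-1]$. It cannot escape to infinity in finite time, since $|B|\le C_0\bar L/|\tau|$ is locally bounded. Lemma~\ref{lem:pointwise:characteristics} with $\Ucal=\Ocal$ gives $\Theta(y_0,-1)=\Theta(X(\tau),\tau)$, and the right side is bounded by $\sqrt2C_1/|\tau|\to0$ as $\tau\to-\infty$. Hence $\Theta(\cdot,-1)=0$.

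\textbf{$C^1$ limit at $\tau=-1$.} The bounds on $B_n(\cdot,-1)$ give $C^1$ convergence on compacts along a subsequence to some $B^*$, with $|B^*|\le C_0$ and $\div B^*=0$. The limit $B^*$ is axisymmetric about a receding axis, so in the limit it has zero $e_\theta^\infty$-independence: its curl converges to the limit of $\Theta_n(\cdot,-1)e_\theta$, namely $\Theta(\cdot,-1)e_\theta^\infty=0$, noting that $\curl b$ has only a $\theta$ component. So $B^*$ is curl free and divergence free, and each component is harmonic.

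\textbf{Main obstacle.} The hard step is justifying the identification $\Theta(\cdot,-1)=\lim\Theta_n(\cdot,-1)$, which requires convergence at the endpoint time. My first attempt is to show that the uniform negative-norm time derivative bound extends up to $\tau=-1$. The fields are smooth there, since $t_n<0$, and all the bounds hold for $\tau\le-1$. This gives equicontinuity in time including the endpoint, so the locally uniform convergence on $\Ocal$, which contains $\tau=-1$, is justified.
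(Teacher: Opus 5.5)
Your proposal is correct and follows the paper's five-step argument. The steps match one for one: compactness of $\Theta_n$ on $\Ocal$ including $\tau=-1$, the weak-$*$ limit $B$, transport of $\Theta$ itself with the stretching, swirl, and viscous terms removed by $R_n\to\infty$ and $\eps_n\to0$, backward characteristics combined with the decay $\sqrt2C_1/|\tau|$, and $C^1$ extraction of $B_n(\cdot,-1)$ with curl tending to $\Theta(\cdot,-1)e_2=0$.

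You deviate in two minor ways. First, you obtain the negative-norm bound on $\p_\tau\Theta_n$ from the scalar equation~\eqref{eq:pointwise:Theta:n:eqn}, which is legitimate here because $R_n\to\infty$ on compacts, whereas the paper takes the curl of~\eqref{eq:pointwise:meridional:equation}. Second, your claim that $B_n\to B$ locally uniformly in $y$ is unjustified, but it is not needed, since weak-$*$ convergence already suffices for the product $B_n\Theta_n$.
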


\begin{proof}[Proof of Lemma~\ref{lem:ancient:axis}]
For convenience, we break down the argument into five steps.

\emph{Step 1: compactness of the azimuthal vorticity.}
Since $\curl b=\omega_\theta e_\theta$, by~\eqref{eq:pointwise:fixed:zoom}
we have that 
\[
\curl B_n(y,\tau) =\Theta_n(y,\tau) \,e_\theta(y)
\] 
for all $y$ off the axis. Here we have used that $e_\theta(a_n+\ell_ny)=e_\theta(y)$ because $a_n$ lies on the axis. For $y$ on the axis, both $\Theta_n$ and $\curl B_n$ vanish, and we deduce that 
\[
|\curl B_n|=|\Theta_n|
\]
holds at every point of $\mathcal D_n$.  Since $|\curl B_n|\le\sqrt2\,|\nabla B_n|$, the bound~\eqref{eq:pointwise:zoom:bounds:c} gives $|\curl B_n|\le\sqrt2C_1/|\tau|\le\sqrt2C_1$ on every compact subset of $\Ocal$, for all $n$ large depending on the set. The bound~\eqref{eq:pointwise:zoom:bounds:b} makes $\curl B_n$ Lipschitz in $y$, with constant $\sqrt2C_2$, on every closed ball contained in $\Dcal_n$. By Lemma~\ref{lem:pointwise}(b) applied to $u(\cdot,t)$ on the convex axisymmetric ball $B(1)$, and using the change of variables in~\eqref{eq:pointwise:fixed:zoom}, the scalars $\Theta_n(\cdot,\tau)$ are Lipschitz on $\Dcal_n$ with the same constant $\sqrt2C_2$. We use both Lipschitz bounds only on the closed balls appearing in the next paragraph.

Taking the curl of~\eqref{eq:pointwise:meridional:equation}, we obtain for every $\bar y\in\RR^3$, $\rho>0$,  $\tau_a<\tau_b\le-1$,  $1<p<\infty$, and all $n$ large depending on these,
\begin{align*}
\norm{\p_\tau\curl B_n}_{L^\infty(\tau_a,\tau_b;W^{-2,p}(B(\bar y,\rho)))}\leq C_{\tau_a,\rho,p}
\,.
\end{align*}
Here $W^{-2,p}(B(\bar y,\rho))$ denotes the dual of $W^{2,p'}_0(B(\bar y,\rho))$, and we used the fact that  the quadratic terms are two derivatives of tensor fields bounded by~\eqref{eq:pointwise:zoom:bounds:a}, the viscous term is $\eps_n\Delta\curl B_n$ with $\curl B_n$ and $\eps_n$ bounded, and the pressure term is gone.  
Since the embedding $C^{0,1}(\overline{B(\bar y,\rho)})\hookrightarrow C^0(\overline{B(\bar y,\rho)})$ is compact and $C^0(\overline{B(\bar y,\rho)})$ embeds continuously and injectively into $W^{-2,p}(B(\bar y,\rho))$, a standard compactness argument gives, for every $\eta>0$, a constant $C_\eta$ with
\[
\norm{F}_{C^0(\overline{B(\bar y,\rho)})}\le\eta\norm{F}_{C^{0,1}(\overline{B(\bar y,\rho)})}+C_\eta\norm{F}_{W^{-2,p}(B(\bar y,\rho))}
\qquad\mbox{for all }F\in C^{0,1}(\overline{B(\bar y,\rho)})
\,.
\]
Applying this bound to $F=\curl B_n(\tau)-\curl B_n(\tau')$, using~\eqref{eq:pointwise:zoom:bounds:b},~\eqref{eq:pointwise:zoom:bounds:c}, and the previous estimate for $\partial_\tau \curl B_n$, we find that the maps $\tau\mapsto\curl B_n(\tau)$ are equicontinuous with values in $C^0(\overline{B(\bar y,\rho)})$. The unit vector $e_\theta(a_n+\ell_ny)$ does not depend on $\tau$, so that the scalars $\Theta_n$ are equicontinuous in $\tau$ as well, and they are equicontinuous in $y$ by the Lipschitz bound  recorded in the previous paragraph. Every compact subset of $\Ocal$ lies in one such product, being bounded in $y$ and in $\tau$, and $\Ocal$ is the union of the products $\overline{B(0,k)}\times[-k,-1]$ with $k\ge1$; applying the Arzel\`a-Ascoli theorem on each of these and extracting diagonally, we obtain~\eqref{eq:pointwise:Theta:convergence}, where $\Theta$ is continuous on $\Ocal$. The bound $|\Theta(y,\tau)|\le\sqrt2\,C_1/|\tau|$ is inherited from~\eqref{eq:pointwise:zoom:bounds:c}. 

The quotients $\Omega_n=\Theta_n/|y'|$ converge uniformly on compact subsets of $\Ocal\cap\{y'\neq0\}$ to $\Omega_\infty:=\Theta/|y'|$, and by~\eqref{eq:pointwise:zoom:bounds:d} we have 
\[
|\Omega_\infty(y,\tau)|\le\frac{\sqrt2C_2}{|\tau|} \, .
\]

\emph{Step 2: the limiting meridional velocity.}
By~\eqref{eq:pointwise:zoom:bounds:a} and~\eqref{eq:pointwise:zoom:bounds:c}, the fields $B_n$ and $\nabla B_n$ are bounded on every compact subset of $\Ocal$. After a further diagonal extraction we have $B_n\rightharpoonup B$ and $\nabla B_n\rightharpoonup\nabla B$ weakly-$*$ in $L^\infty$ on every such set, with $\nabla B$ the distributional spatial gradient of $B$. The limit satisfies $\div B=0$, $\curl B=\Theta\,e_\theta$, and due to~\eqref{eq:pointwise:zoom:bounds:c},~\eqref{eq:pointwise:zoom:bounds:a}, and~\eqref{eq:ancient:Lbar} it obeys
\begin{align}
|\nabla B(y,\tau)|\le\frac{C_1}{|\tau|}\,,
\qquad
|B(y,\tau)|\le\frac{C_0\,\bar L(\tau)}{|\tau|}
\qquad\mbox{for almost every }(y,\tau)\in\Ocal
\,.
\label{eq:pointwise:limit:velocity}
\end{align}
Since $B$ is defined only up to a null set, for almost every $\tau$ we replace $B(\cdot,\tau)$ by its continuous representative, which is divergence free, Lipschitz on $\RR^3$ with constant $C_1/|\tau|$, and bounded by $C_0\bar L(\tau)/|\tau|$ at every $y\in\RR^3$. On the exceptional null set of times we set $B(\cdot,\tau)=0$. The resulting field is continuous in $y$ at every $\tau$ by construction, and measurable in $\tau$ at every $y$. The Carath\'eodory solutions of $\dot X(\tau)=B(X(\tau),\tau)$, that is, the absolutely continuous $X$ which satisfy the equation in integral form, do not see the exceptional set. The fields $B_n$ are axisymmetric and swirl free, and thus so is $B$. Since $B$ is continuous in $y$, its radial component $B_r$ vanishes on the axis, and the Lipschitz bound gives $|B_r(y,\tau)|\leq C_1|y'|/|\tau|$.

\emph{Step 3: the transport equation for the potential vorticity.}
From~\eqref{eq:Omega:eqn}, the change of variables~\eqref{eq:pointwise:fixed:zoom}, and $\div B_n=0$, we obtain that the potential vorticity $\Omega_n$ satisfies
\begin{align}
\p_\tau\Omega_n+ \div \Bigl( B_n \, \Omega_n\Bigr)
=\eps_n\Big(\Delta+\frac{2}{|y'|}\p_{|y'|}\Big)\Omega_n+\p_z\Big(\frac{|Z_n|^2}{|y'|^2}\Big)
\qquad\mbox{on }\{y\in\Dcal_n,\ y'\neq0\}
\,.
\label{eq:pointwise:Omega:n:eqn}
\end{align}
We fix $\varphi\in C^\infty_0$ compactly supported in the interior of $\Ocal\cap\{y'\neq0\}$ and test~\eqref{eq:pointwise:Omega:n:eqn} against $\varphi$. For all large enough $n$, we have $\supp(\varphi) \subset \Dcal_n^\flat$. The viscous term contributes at most $\eps_n\norm{\Omega_n}_{L^\infty}\int(|\Delta\varphi|+2|y'|^{-1}|\p_{|y'|}\varphi|)$, which tends to zero as $n\to \infty$ in light of~\eqref{eq:pointwise:zoom:bounds:d}. The swirl source contributes $-\int|y'|^{-2}|Z_n|^2\p_z\varphi$, which tends to zero as $n\to \infty$ because $|Z_n|\leq C_\Gamma\eps_n/|y'|$ on the support of $\varphi$ by~\eqref{eq:pointwise:zoom:bounds:e}. For the drift term,  we have $\Omega_n\nabla\varphi\to\Omega_\infty\nabla\varphi$ in $L^1$ by \emph{Step~1}, and $B_n\rightharpoonup B$ weakly-$*$ in $L^\infty$ by \emph{Step~2}. Passing $n\to \infty$ we therefore obtain
\begin{align}
\int_{\Ocal\cap\{y'\ne0\}}\Omega_\infty\big(\p_\tau\varphi+B\cdot\nabla\varphi\big)\,dy\,d\tau=0
\,,
\label{eq:pointwise:ancient:transport}
\end{align}
for every $\varphi\in C^\infty_0$ compactly supported in the interior of $\Ocal\cap\{y'\neq0\}$. 
In~\eqref{eq:pointwise:ancient:transport} the transported function $\Omega_\infty$ is continuous and bounded on its domain, and the velocity is bounded on compact subsets of $\Ocal$, divergence free, measurable in time, and Lipschitz in space on compact sets.

\emph{Step 4: characteristics and the vanishing of the limit vorticity.}
We fix a point $y_0\in\RR^3$ off the axis. Since $B(\cdot,\tau)$ is Lipschitz on $\RR^3$ with constant $C_1/|\tau|$ and is bounded by $C_0\bar L(\tau)/|\tau|$, the equation $\dot X(\tau)=B(X(\tau),\tau)$ with $X(-1)=y_0$ has a unique solution $X$ on $(-\infty,-1]$, absolutely continuous on compact subintervals. In order to apply Lemma~\ref{lem:pointwise:characteristics} on $\Ucal:=\Ocal\cap\{y'\ne0\}$, we need the graph of $X$ over $[\tau_0,-1]$ to be a compact subset of $\Ucal$ for every $\tau_0<-1$. This follows from two a priori bounds. First, the velocity bound in~\eqref{eq:pointwise:limit:velocity} gives
\begin{align}
|X(\tau)|\le|y_0|+C_0\int_\tau^{-1}\frac{\bar L(\bar \tau)}{|\bar \tau|}\,d\bar\tau
\,,
\label{eq:ancient:displacement}
\end{align}
whose right side is finite on $[\tau_0,-1]$, since $\bar L$ is bounded on bounded sets. The path's distance to the axis, denoted by $\varrho(\tau):=|X'(\tau)|$, is absolutely continuous. Since $B$ is swirl free with $|B'(y,\tau)|=|B_r(y,\tau)|\le C_1|y'|/|\tau|$ by \emph{Step~2}, it follows that $|\dot\varrho|\le|B'(X,\tau)|\le C_1\varrho/|\tau|$ almost everywhere. Gr\"onwall's inequality on $[\tau,-1]$ then yields $\varrho(-1)\le\varrho(\tau)\,|\tau|^{C_1}$, that is,
\begin{align*}
\varrho(\tau)\ge|y_0'|\,|\tau|^{-C_1} \,, \qquad \mbox{for} \qquad \tau\le-1\,.
\end{align*}
Hence, the graph of $X$ over $[\tau_0,-1]$ lies in $\{(y,\tau)\colon\tau_0\le\tau\le-1,\ |y|\le M(\tau_0),\ |y'|\ge|y_0'|\,|\tau_0|^{-C_1}\}$, where $M(\tau_0)$ is the right side of~\eqref{eq:ancient:displacement} evaluated at $\tau=\tau_0$; this is a compact subset of $\Ucal$.

By~\eqref{eq:pointwise:ancient:transport} and \emph{Step~2}, the pair $(\Omega_\infty,B)$ on $\Ucal = \Ocal\cap\{y'\ne0\}$ satisfies the hypotheses of Lemma~\ref{lem:pointwise:characteristics}, with $\lambda(\tau)=C_1/|\tau|$. From the last estimate in \emph{Step~1}, we therefore have 
\[
|\Omega_\infty(y_0,-1)|=\lim_{\tau\to-\infty}|\Omega_\infty(X(\tau),\tau)|\le\lim_{\tau\to-\infty}  \frac{\sqrt2C_2}{|\tau|} =0 \,,
\] 
which implies $\Omega_\infty(\cdot,-1)=0$ on $\{ y \in \RR^3 \colon |y'|>0\}$. This proves~\eqref{eq:ancient:terminal}, since $\Theta(\cdot,-1)=|y'|\Omega_\infty(\cdot,-1)$ off the axis and $\Theta$ is continuous.

\emph{Step 5: extracting the limit at the terminal time.}
Being a weak-$*$ limit in $L^\infty$ of the sequence $B_n$ (see \emph{Step~2}), the field $B$ is determined only up to null sets of $\Ocal$, and the slice $\{\tau=-1\}$ is one. In order to obtain a limit for the fields $B_n(\cdot,-1)$, we appeal to a compactness argument rooted in the convergence of the curl of these fields to $\Theta e_\theta$ (see \emph{Step~1}). By~\eqref{eq:pointwise:zoom:bounds:a},~\eqref{eq:pointwise:zoom:bounds:b}, and~\eqref{eq:pointwise:zoom:bounds:c} at $\tau=-1$, where $L_n(-1)=1$, the fields $B_n(\cdot,-1)$ are bounded in $C^{1,1}$ on every compact subset of $\RR^3$ and satisfy $|B_n(\cdot,-1)|\leq C_0$ there. Passing to a further subsequence, we obtain $B_n(\cdot,-1)\to B^*$ in $C^1$ on compact subsets of $\RR^3$. It automatically holds that $\div B^*=0$ and $|B^*|\leq C_0$. This subsequence refines the one of \emph{Step~1}, and $|\curl B_n|=|\Theta_n|$ pointwise, so that $|\curl B^*|=|\Theta(\cdot,-1)|$. Due to~\eqref{eq:ancient:terminal} we thus obtain also $\curl B^*=0$. By elliptic regularity $B^*$ is smooth and each of its components is harmonic.
\end{proof}

\begin{proof}[Proof of Lemma~\ref{lem:ancient:receding}]
We follow the five steps of the proof of Lemma~\ref{lem:ancient:axis}, and we only record the necessary changes.

\emph{Step 1: compactness of the azimuthal vorticity.}
Since $\curl b=\omega_\theta e_\theta$, by~\eqref{eq:pointwise:fixed:zoom} we now have
\[
\curl B_n(y,\tau)=\Theta_n(y,\tau)\,e_\theta(a_n+\ell_ny)
\]
wherever $R_n(y) = |a_n' + \ell_n y'|/\ell_n>0$. Since $a_n$ does not lie on the symmetry axis we cannot equate $e_\theta(a_n+\ell_ny)$ with $e_\theta(y)$; however, because $d_n\to\infty$ and $a_n'=(r_n,0)$ we have $e_\theta(a_n+\ell_ny)\to e_2$ uniformly on compact subsets of $\RR^3$. Since $R_n(y)\ge r_n/\ell_n - |y'| \geq d_n-|y|$, we deduce that $|\curl B_n|=|\Theta_n|$ on every compact subset of $\RR^3$, for all $n$ large depending on the set. The bounds on $\curl B_n$ and on $\Theta_n$, the negative-norm estimate for $\p_\tau\curl B_n$, and the compactness argument of \emph{Step~1} of the proof of Lemma~\ref{lem:ancient:axis} apply verbatim, and give~\eqref{eq:pointwise:Theta:convergence}, where $\Theta$ is continuous on $\Ocal$ with $|\Theta(y,\tau)|\le\sqrt2\,C_1/|\tau|$. Here we do not consider the limit of the potential vorticities $\Omega_n$.

\emph{Step 2: the limiting meridional velocity.}
Exactly as in \emph{Step~2} of the proof of Lemma~\ref{lem:ancient:axis}, after a further diagonal extraction we have $B_n\rightharpoonup B$ and $\nabla B_n\rightharpoonup\nabla B$ weakly-$*$ in $L^\infty$ on every compact subset of $\Ocal$, the limit satisfies $\div B=0$, $\curl B=\Theta\,e_2$, and the bounds in~\eqref{eq:pointwise:limit:velocity}. We fix a representative in the same way. 

\emph{Step 3: the transport equation for the azimuthal vorticity.}
In this proof we transport $\Theta_n$ itself. From~\eqref{eq:vort:theta} and the change of variables~\eqref{eq:pointwise:fixed:zoom} we obtain
\begin{align}
\p_\tau\Theta_n+B_n\cdot\nabla\Theta_n
=\frac{(B_n)_r}{R_n}\Theta_n+\frac{1}{R_n}\p_z\big(|Z_n|^2\big)+\eps_n\Big(\Delta-\frac1{R_n^2}\Big)\Theta_n
\qquad\mbox{on }\{y\in\Dcal_n\colon R_n(y)>0\}
\,,
\label{eq:pointwise:Theta:n:eqn}
\end{align}
where $(B_n)_r$ is the radial component with respect to the original axis. We fix $\varphi\in C^\infty_0$ compactly supported in the interior of $\Ocal$ and test~\eqref{eq:pointwise:Theta:n:eqn} against $\varphi$. On $\supp\varphi$ we have $R_n(y)\ge d_n-|y|\to\infty$ and $|\p_z(\varphi/R_n)|\leq C/(d_n-|y|)$, so that every term on the right side tends to zero as $n\to\infty$, by the vanishing $\eps_n \to 0$ and the bounds~\eqref{eq:pointwise:zoom:bounds:a} and~\eqref{eq:pointwise:zoom:bounds:c} on $B_n$, $Z_n$, and $\Theta_n$. Here the receding axis, namely the fact that $R_n(y)\to\infty$, plays the same role as the circulation bound in \emph{Step~3} of the proof of Lemma~\ref{lem:ancient:axis}: it removes the stretching term and the swirl source, while the viscous terms vanish because $\eps_n\to0$. Passing $n\to\infty$ in the drift term as in that step, we obtain
\begin{align}
\int_{\Ocal}\Theta\big(\p_\tau\varphi+B\cdot\nabla\varphi\big)\,dy\,d\tau=0
\,,
\label{eq:pointwise:ancient:transport:planar}
\end{align}
for every $\varphi\in C^\infty_0$ compactly supported in the interior of $\Ocal$. Here too the transported function is continuous and bounded on its domain, and the velocity is bounded on compact subsets of $\Ocal$, divergence free, measurable in time, and Lipschitz in space on compact sets.

\emph{Step 4: characteristics and the vanishing of the limit vorticity.}
We fix a point $y_0\in\RR^3$. As in \emph{Step~4} of the proof of Lemma~\ref{lem:ancient:axis}, the equation $\dot X(\tau)=B(X(\tau),\tau)$ with $X(-1)=y_0$ has a unique solution $X$ on $(-\infty,-1]$, absolutely continuous on compact subintervals, and the displacement bound~\eqref{eq:ancient:displacement} holds. Since here $\Ucal=\Ocal$, that bound alone confines the graph of $X$ over $[\tau_0,-1]$ to a compact subset of $\Ucal$, and we need no lower bound on the distance to the axis. By~\eqref{eq:pointwise:ancient:transport:planar} and \emph{Step~2}, the pair $(\Theta,B)$ satisfies the hypotheses of Lemma~\ref{lem:pointwise:characteristics}, with $\lambda(\tau)=C_1/|\tau|$, so that
\[
|\Theta(y_0,-1)|=\lim_{\tau\to-\infty}|\Theta(X(\tau),\tau)|\le\lim_{\tau\to-\infty}\frac{\sqrt2C_1}{|\tau|}=0
\,,
\]
which proves~\eqref{eq:ancient:terminal}.

\emph{Step 5: extracting the limit at the terminal time.}
As in \emph{Step~5} of the proof of Lemma~\ref{lem:ancient:axis}, the weak-$*$ convergence to the field $B$ gives no limit for the fields $B_n(\cdot,-1)$. By~\eqref{eq:pointwise:zoom:bounds:a},~\eqref{eq:pointwise:zoom:bounds:b}, and~\eqref{eq:pointwise:zoom:bounds:c} at $\tau=-1$, where $L_n(-1)=1$, the fields $B_n(\cdot,-1)$ are bounded in $C^{1,1}$ on every compact subset of $\RR^3$ and satisfy $|B_n(\cdot,-1)|\leq C_0$ there. Passing to a further subsequence, we obtain $B_n(\cdot,-1)\to B^*$ in $C^1$ on compact subsets of $\RR^3$, with $\div B^*=0$ and $|B^*|\leq C_0$. This subsequence refines the one of \emph{Step~1}. Since $|\curl B_n|=|\Theta_n|$ on compact sets for $n$ large, we obtain $|\curl B^*|=|\Theta(\cdot,-1)|$, which vanishes by~\eqref{eq:ancient:terminal}. By elliptic regularity $B^*$ is smooth and each of its components is harmonic.
\end{proof}

\subsection{Proof of Proposition~\ref{prop:pointwise:expanding}}
\label{sec:pointwise:applications}
Since $\ell(t)\to0$, we may fix $T_1\in(0,1]$ for which~\eqref{eq:pointwise:T:one} holds, so that the fields and the bounds of Section~\ref{sec:pointwise:zoom} are available. In particular,~\eqref{eq:pointwise:meridional:bounded} shows that $\sup_{t\in(-T_1,0)}(-t)\norm{\nabla b(\cdot,t)}_{L^\infty(B(1/2))}\leq C_1$.

Suppose by contradiction that~\eqref{eq:pointwise:expanding:conclusion} fails. Then there exist $c_0>0$, times $t_n\uparrow0$ with $t_n\in(-T_1,0)$, and points $x_n\in B(1/2)$ with
\begin{align}
(-t_n)|\nabla b(x_n,t_n)|\ge c_0
\,.
\label{eq:pointwise:selected}
\end{align}
By rotational invariance we may without loss of generality take $x_n'=(r_n,0)$. Recall that $\ell_n = \ell(t_n)$. After passing to a subsequence, either $r_n/\ell_n$ is bounded (finite-axis), or $r_n/\ell_n\to\infty$ (receding-axis). These are the two configurations discussed in Section~\ref{sec:pointwise:zoom}, and we choose the points $a_n$ from~\eqref{eq:pointwise:fixed:zoom} accordingly: in the finite-axis configuration we define $a_n:=(0,0,x_n\cdot e_z)$, while in the receding-axis configuration we define $a_n:=x_n$. In both cases $|a_n|\le|x_n|<\frac12$, and the image $y_n^*:=(x_n-a_n)/\ell_n$ of $x_n$ under the zoom-in from~\eqref{eq:pointwise:fixed:zoom} is $(r_n/\ell_n,0,0)$ and $(0,0,0)$ respectively, hence bounded.

In terms of the doubling constant $D\geq 1$ from~\eqref{eq:pointwise:length:doubling}, we define $\bar L\colon(-\infty,-1]\to[1,\infty)$ by
\begin{align*}
\bar L(\tau):=D^{\lceil\log_2|\tau|\rceil}
\,.
\end{align*}
This function is non-decreasing in $|\tau|$ and satisfies $\bar L(\tau)\leq C_T$ for $-T\le\tau\le-1$, with $C_T$ as in~\eqref{eq:pointwise:length:ratios}. By monotonicity and doubling, $L_n(\tau) \le D^k$ for all $\tau\in[-2^k,-2^{k-1})\cap[-T,-1]$ and $0\le k\le\lceil\log_2T\rceil$, for all large $n$ (depending on $T$). Therefore, $L_n\le\bar L$ on $[-T,-1]$ for every $T>1$, and all $n$ large enough depending on $T$. Thus, we have verified assumption~\eqref{eq:ancient:Lbar}.

We apply Lemma~\ref{lem:ancient:axis} in the finite-axis configuration and Lemma~\ref{lem:ancient:receding} in the receding-axis configuration. In both cases, along a subsequence $B_n(\cdot,-1)\to B^*$ in $C^1$ on compact subsets of $\RR^3$, with each component of $B^*$ being harmonic. Moreover, $|B^*|\leq C_0$. A bounded harmonic function on $\RR^3$ is constant, and thus $\nabla B^*\equiv0$.

Since $y_n^*$ is bounded, after passing to a further subsequence we have $y_n^*\to y^*$. Differentiating~\eqref{eq:pointwise:fixed:zoom}, we obtain $\nabla B_n(y_n^*,-1)=(-t_n)\nabla b(x_n,t_n)$, and since $\nabla B_n(\cdot,-1)\to\nabla B^*$ uniformly on compact sets, passing to the limit in~\eqref{eq:pointwise:selected} yields $|\nabla B^*(y^*)|\ge c_0$. This contradicts $\nabla B^*\equiv0$, and Proposition~\ref{prop:pointwise:expanding} is proved.
\qed

\section{The local regularity conclusion}
\label{sec:pointwise:regularity}
In this section we use the asymptotic vanishing statement~\eqref{eq:pointwise:meridional:small} (which for $\rho_0=1/2$ is~\eqref{eq:pointwise:expanding:conclusion}) to prove  regularity of the flow on a smaller fixed ball, which completes the proof of Theorem~\ref{thm:pointwise:variable}.  

\begin{proposition}[{\bf Smallness of $(-t)\norm{\nabla b(\cdot,t)}_{L^\infty}$ implies regularity}]
\label{prop:pointwise:meridional:small}
Suppose that~\eqref{eq:pointwise:meridional:small} holds for some $\rho_0\in(0,1)$. Then $(0,0)$ is a regular point.
\end{proposition}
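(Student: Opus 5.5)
The plan follows the outline in Section~\ref{sec:intro:ideas}. We first use~\eqref{eq:pointwise:meridional:small} to bound the swirl by a small power of $(-t)^{-1}$, then close a localized enstrophy estimate giving $u\in L^4_tL^6_x$ near $(0,0)$, and finally invoke a local Ladyzhenskaya--Prodi--Serrin criterion.

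\emph{Step 1 (swirl bound).} Fix $\eta>0$. By~\eqref{eq:pointwise:meridional:small}, there is $t_\eta<0$ with $(-t)\norm{\nabla b(\cdot,t)}_{L^\infty(B(\rho_0))}\le\eta$ for $t\in(t_\eta,0)$. Since $u_r$ vanishes on the axis, this gives $|u_r/r|\le\eta/(-t)$ on $B(\rho_0)$. Apply Lemma~\ref{lem:pointwise:local:circulation} with $R_1<R_0\le\rho_0$ to get a regular annulus $\{R_-<|x|<R_+\}\times(t_0,0)$ inside $B(\rho_0)$, and fix $R_*$ in it. Set $v=(-t)^{\eta}u_\theta$. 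By~\eqref{eq:swirl:eqn},
\[
\p_tv+b\cdot\nabla v-\Big(\Delta-\tfrac1{r^2}\Big)v=-\Big(\tfrac{u_r}{r}+\tfrac{\eta}{(-t)}\Big)v ,
\]
and the zeroth order coefficient $\frac{u_r}{r}+\frac{\eta}{-t}$ is nonnegative on $B(R_*)\times(\max(t_0,t_\eta),0)$. Together with the $-r^{-2}$ term, the maximum principle then bounds $|v|$ by its values on the parabolic boundary. Those values are bounded: on $\p B(R_*)$ by the annulus bound, at the initial time by smoothness, and $u_\theta=0$ on the axis. This gives $|u_\theta|\le C_\eta(-t)^{-\eta}$ on $B(R_*)$.

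\emph{Step 2 (localized enstrophy).} Take a cutoff $\phi$ supported in $B(R_*)$ with $\nabla\phi$ supported in the regular annulus, and estimate $\frac{d}{dt}\int\phi^2|\omega|^2$ using the axisymmetric structure. The components are $\omega_\theta$ and $\omega_r e_r+\omega_z e_z=\curl(u_\theta e_\theta)$.
\begin{itemize}
\item For the $\omega_\theta$ equation~\eqref{eq:vort:theta}, the stretching term $\frac{u_r}{r}\omega_\theta^2$ is at most $\frac{\eta}{-t}\int\phi^2\omega_\theta^2$.
\item The swirl source $\frac1r\p_z(u_\theta^2)\omega_\theta$ is handled by integrating by parts in $z$ and bounding it by $\int|u_\theta|^2|\nabla(\phi^2\omega_\theta/r)|$. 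I would rather control this through the $\Omega$ equation~\eqref{eq:Omega:eqn} or Hardy's inequality, using $|u_\theta|\le C_\eta(-t)^{-\eta}$.
\item For the swirl vorticity, $\p_t u_\theta$ equations give $\int|\nabla u_\theta|^2$-type energy terms with stretching $\nabla b$, bounded by $\frac{\eta}{-t}$.
\item Cutoff errors live on the annulus, where everything is bounded.
\end{itemize}
The resulting inequality has the form
\[
\frac{d}{dt}E\le\frac{C\eta}{-t}E+C_\eta(-t)^{-2\eta}E^{1/2}\cdots+C .
\]
Grönwall then gives $E(t)\lesssim(-t)^{-C\eta}$, with a dissipation integral of the same size. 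Choosing $\eta$ small, this yields $\nabla u\in L^\infty_tL^2_x$ up to a weight $(-t)^{-C\eta}$. Interpolating with the energy class then gives $u\in L^4_tL^6_x$ locally, since $\norm{u}_{L^6}\lesssim\norm{\nabla u}_{L^2}+\text{l.o.t.}\lesssim(-t)^{-C\eta}$ and $(-t)^{-4C\eta}$ is integrable.

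\emph{Step 3.} Since $4$ and $6$ satisfy $2/4+3/6=1$, the local Serrin criterion for suitable weak solutions (e.g.\ Struwe's, or the CKN $\eps$-regularity argument) gives boundedness near $(0,0)$.

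The main obstacle is Step 2, specifically the swirl source term $\frac1r\p_z(u_\theta^2)$ near the axis. I would first try to estimate $\Omega$ and $\nabla u_\theta$ together, using the good $\frac{u_r}{r}$ sign structure and $|u_\theta|\le C_\eta(-t)^{-\eta}$, so that this term costs only a factor $(-t)^{-2\eta}$ times dissipation.
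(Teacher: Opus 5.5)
Your Steps~1 and~3 agree with the paper's proof. Step~1 is the maximum principle for $(-t)^\eta u_\theta$, whose zeroth order coefficient is at least $r^{-2}\ge0$ because $|u_r/r|\le\eta/(-t)$. Step~3 is the local Serrin criterion in $L^4_tL^6_x$. Step~2, however, has a genuine gap, exactly at the term you flag.

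The swirl source $\frac1r\p_z(u_\theta^2)\,\omega_\theta$ is the only place where the swirl enters the enstrophy balance, and it is the reason Step~1 is needed. Yet it is never estimated, and the inequality you display, with ``$C_\eta(-t)^{-2\eta}E^{1/2}\cdots$'', is not derived. Gr\"onwall needs an inequality linear in $E$ with a time-integrable coefficient:
\[
\frac{d}{dt}E+M\le C\,(G+W^2+1)(E+1),
\]
where $M$ is the dissipation and $W=\norm{u_\theta(\cdot,t)}_{L^\infty(B(R_*))}\lesssim(-t)^{-\eta}$. The constant $C$ must not depend on $\eta$, so that $\eta$ can be chosen afterwards. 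The routes you suggest do not produce this. Integrating by parts in $z$ and bounding $|u_\theta|^2$ by $W^2$ leaves $\norm{\nabla(\phi^2\omega_\theta/r)}_{L^1}$, which the enstrophy balance does not provide. Already $\int\phi^2|\p_z\omega_\theta|/r$ cannot be bounded by $\norm{\phi\nabla\omega_\theta}_{L^2}$, since $1/r\notin L^2_{\mathrm{loc}}(\RR^3)$. An estimate through~\eqref{eq:Omega:eqn} runs into the source $\p_z(u_\theta^2/r^2)$, i.e.\ into $u_\theta/r$, which Step~1 does not control.

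The missing observation is that the dissipation already controls the singular factor, so only one factor of $u_\theta$ has to go in $L^\infty$. Testing~\eqref{eq:vort:theta} against $\phi^2\omega_\theta$, the operator $\Delta-r^{-2}$ produces $\int\phi^2\big(|\nabla\omega_\theta|^2+\omega_\theta^2/r^2\big)$. Since $\p_zu_\theta=-\omega_r$, the source equals $-2\int\phi^2u_\theta\,\omega_r\,\omega_\theta/r$. This is at most $2W\norm{\phi\,\omega_r}_{L^2}\norm{\phi\,\omega_\theta/r}_{L^2}\le\frac12\norm{\phi\,\omega_\theta/r}_{L^2}^2+2W^2E$.

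Alternatively, after your integration by parts, use $\int\phi^2u_\theta^2|\p_z\omega_\theta|/r\le W\norm{\phi\,u_\theta/r}_{L^2}\norm{\phi\nabla\omega_\theta}_{L^2}$. Here $\norm{\phi\,u_\theta/r}_{L^2}^2\lesssim E+1$ follows from $|\nabla(u_\theta e_\theta)|^2=|\nabla u_\theta|^2+u_\theta^2/r^2$ and the div-curl identity for $\phi\,u_\theta e_\theta$; this is the Hardy route actually carried out. Your remaining terms are then fine, since the equations for $\omega_r$ and $\omega_z$ involve only advection and stretching by $b$.

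The paper avoids components altogether:
\begin{itemize}
\item It tests the full vorticity equation against $\chi^2\omega$.
\item It writes the meridional term as $\p_j(b_j\omega_i)+\varepsilon_{ik\ell}\p_kb_j\,\p_ju_\ell$.
\item It moves the curl in $\curl((w\cdot\nabla)u)$ onto $\chi^2\omega$, which gives $2W\norm{\chi\nabla u}_{L^2}M^{1/2}$.
\item It closes with $\norm{\chi\nabla u}_{L^2}^2\le C(Y+1)$ and Young, reaching $\frac{d}{dt}Y+M\le C_*(G+W^2+1)(Y+1)$.
\end{itemize}
The choice $\eta=1/(16C_*)$ then gives $Y\lesssim(-t)^{-1/16}$, and hence $u\in L^4_tL^6_x$, as you intend.
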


\begin{proof}[Proof of Proposition~\ref{prop:pointwise:meridional:small}]
We fix a regular annulus for $u$ inside $B(\rho_0)$, as in
Lemma~\ref{lem:pointwise:local:circulation}; we also fix the time $t_0$ and the radii $R_-<R_+$ this lemma supplies, together with an $R_*\in(R_-,R_+)$. Let $\chi$ be a smooth radial cutoff
supported in $B(R_*)$, equal to one on $B(\rho)$ for some $\rho\in(R_-,R_*)$, so that the
derivatives of $\chi$ are supported in the regular annulus $\{R_-<|x|<R_+\}$ of $u$.  

We write $G(t)=\norm{\nabla b(\cdot,t)}_{L^\infty(B(\rho_0))}$, and for each
$\eta>0$ we fix $t_\eta\in(t_0,0)$ so close to zero that
\begin{equation}
\label{eq:pointwise:G:bound}
G(t)\le \frac{\eta}{(-t)}\, \qquad \mbox{on} \qquad (t_\eta,0)
\,.
\end{equation}
Note that since the radial component $b_r=u_r$ of $b$ vanishes on the axis we have
$|b_r/r|\leq G$. Moreover, the azimuthal velocity $u_\theta$ vanishes on the axis of symmetry, is bounded on $\partial B(R_*)\times(t_0,0)$ and on the initial time-slice $B(R_*) \times \{t_\eta\}$. From the swirl equation~\eqref{eq:swirl:eqn} we have that the function $\psi=(-t)^\eta u_\theta$
satisfies 
\[
\p_t\psi+b\cdot\nabla\psi-\Delta\psi
+\Bigl(r^{-2}+\frac{b_r}{r}+\frac{\eta}{(-t)}\Bigr)\psi=0
\,.
\]
Since $|b_r/r|\leq G(t)\le\eta/(-t)$, the zeroth order coefficient satisfies
\[
r^{-2}+\frac{b_r}{r}+\frac{\eta}{(-t)}\ge r^{-2}\ge0
\]
on $B(R_*)\times (t_\eta,0)$. Although the coefficient $r^{-2}$ is singular on the axis, $\psi$ vanishes there, since $u_\theta$ does, so that a positive maximum or a negative minimum of $\psi$ which is not attained on the parabolic boundary is attained off the axis, where the classical argument applies. Applying the maximum principle, we obtain a constant $C_\eta>0$ such that $|\psi| \leq C_\eta$ on $B(R_*)\times (t_\eta,0)$, and therefore
\begin{align}
W(t):=\norm{u_\theta(\cdot,t)}_{L^\infty(B(R_*))}=\norm{w(\cdot,t)}_{L^\infty(B(R_*))}
\leq C_\eta (-t)^{-\eta}
\,.
\label{eq:pointwise:swirl:slow}
\end{align}

We let
\[
Y(t) = \| \chi \, \omega(\cdot,t)\|_{L^2}^2\,,
\qquad
M(t)= \| \chi \, \nabla \omega(\cdot,t)\|_{L^2}^2\,,
\]
where as usual, $\omega=\curl u$. Taking the curl of~\eqref{eq:nse}, and splitting the advecting velocity as $u=b+w$ (cf.~\eqref{eq:axi:b:w:def}), we obtain
\[
\p_t\omega-\Delta\omega+\curl\bigl((b\cdot\nabla)u\bigr)+\curl\bigl((w\cdot\nabla)u\bigr)=0
\,.
\]
We test this equation against $\chi^2\omega$. Because $u$ and all its spatial derivatives are bounded on the regular annulus, and thus on $\supp(\nabla\chi)$, every term in which a derivative falls on $\chi$ is bounded by a constant depending only on $\chi$ and the $u$-bounds. The diffusion term therefore yields $M$ up to such an a priori bounded term. For the nonlinear contribution from the meridional term, since $\div b=0$  we have the identity
\[
\bigl(\curl\bigl((b\cdot\nabla)u\bigr)\bigr)_i
=\p_j(b_j\omega_i)+\varepsilon_{ik\ell}(\p_kb_j)(\p_ju_\ell)
\,,
\]
where $\varepsilon_{ik\ell}$ is the Levi-Civita symbol.
By the definitions of $G$ and $Y$, the second term is bounded by $G\norm{\chi\nabla u}_{L^2}Y^{1/2}$. The first term contributes only a priori bounded terms which contain $\nabla \chi$ because $\div b = 0$ implies 
$\int \p_j(b_j\omega_i) \omega_i \chi^2  = - \frac 12 \int b \cdot \nabla |\omega|^2 \chi^2  - 2 \int b\cdot\nabla \chi |\omega|^2 \chi = - \int b\cdot\nabla \chi |\omega|^2 \chi $.
For the contribution from the nonlinear swirl term, we move the $\curl$ onto $\chi^2\omega$, and by the definition of $W$ this contributes an a priori bounded term and a term bounded by  $2 W\norm{\chi\nabla u}_{L^2}M^{1/2}$. We have thus shown that there exists $C\geq 1$, which depends on the bounds of $u$ and its derivatives on the regular annulus, such that
\[
\frac 12 \frac{d}{dt} Y + M \leq G\norm{\chi\nabla u}_{L^2}Y^{1/2} + 2 W\norm{\chi\nabla u}_{L^2}M^{1/2} + C \,.
\]
It remains to bound $\norm{\chi\nabla u}_{L^2}$. The whole-space div-curl identity, applied to the compactly supported field $\chi u$, together with $\div u=0$, gives
\[
\norm{\nabla(\chi u)}_{L^2}^2\le C(Y+1)
\,,
\]
upon possibly enlarging the constant $C$; since $\nabla\chi$ is supported in the regular annulus, the same bound holds for $\norm{\chi\nabla u}_{L^2}^2$. 
Absorbing $M^{1/2}$ into the dissipation by Young's inequality, we arrive at 
\begin{align}
\frac{d}{dt} Y + M \leq C_*\bigl(G+W^2+1\bigr)(Y+1)
\,,
\label{eq:pointwise:enstrophy:split}
\end{align}
for a constant $C_*\ge1$. 

At this point we fix $\eta=1/(16C_*)\le1/16$; this choice is permissible because $C_*$ depends only on the cutoff $\chi$ and on the bounds on $u$ on the regular annulus, and these were fixed before $\eta$ was chosen. 
Combining~\eqref{eq:pointwise:enstrophy:split} with the bounds in~\eqref{eq:pointwise:G:bound} and~\eqref{eq:pointwise:swirl:slow}, with the above choice of $\eta$ we obtain from Gr\"onwall's inequality that
\[
Y(t) \leq (Y(t_\eta) + 1)
\exp\left( \int_{t_\eta}^t \Bigl(\frac{C_* \eta}{(-s)} + \frac{C_* C_\eta^2}{(-s)^{2\eta}} + C_* \Bigr) ds\right)
\leq C_\eta' (-t)^{-C_*\eta} 
\]
for all $t \in (t_\eta,0)$, for a suitable constant $C_\eta' >0$.
The div-curl estimate above and the homogeneous Sobolev inequality
therefore give
\[
\norm{u(\cdot,t)}_{L^6(B(\rho))}
\le\norm{\chi u(\cdot,t)}_{L^6(\RR^3)}
\leq C_\eta^{\prime\prime} (-t)^{-C_*\eta/2}
\,,
\]
for all $t \in (t_\eta,0)$. Here $B(\rho)$ is the fixed ball on which $\chi\equiv 1$. Since
$2C_*\eta=1/8<1$, the above bound implies a local Ladyzhenskaya-Prodi-Serrin regularity condition (cf.~Serrin~\cite{Serrin62} for $u\in L^q_tL^p_x$ with $3/p+2/q<1$, and Struwe~\cite[Theorem~3.1]{Struwe88} for $3/p+2/q=1$)
\[
u\in L^4(t_\eta,0;L^6(B(\rho))) \,.
\] 
The specific form of local regularity we need here may be found for instance in Gustafson, Kang, and Tsai~\cite[Corollary~1.2(i)]{GustafsonKangTsai07}: if $u \in L^4((-r^2,0);L^6(B(r)))$ for some $r>0$, then the top of the cylinder $(0,0)$ is a regular point in the sense of~\eqref{eq:intro:regular}. Here we apply this result with $r = \min\{ \rho, |t_\eta|^{1/2} \}$. 
\end{proof}

\begin{proof}[Proof of Theorem~\ref{thm:pointwise:variable}]
By Proposition~\ref{prop:pointwise:expanding}, the bounds~\eqref{eq:pointwise:velocity} give~\eqref{eq:pointwise:expanding:conclusion}, which is~\eqref{eq:pointwise:meridional:small} with $\rho_0=1/2$, and Proposition~\ref{prop:pointwise:meridional:small} makes $(0,0)$ a regular point.
\end{proof}

Note that Theorem~\ref{thm:pointwise:variable} gives regularity not just for $\ell(t) = (-t)^\gamma$ with $\gamma \in (0,1/2)$, but also for power laws with a logarithmic correction.

\begin{corollary}[{\bf Local power laws and logarithmic corrections}]
\label{cor:pointwise:logs}
Let $(u,\pi)$ be as in Theorem~\ref{thm:pointwise:variable}, with~\eqref{eq:pointwise:velocity} being replaced by
the two local bounds
\begin{align}
\norm{u(\cdot,t)}_{L^\infty(B(1))}
\leq C (-t)^{\gamma-1}\Big(\log\frac e{(-t)}\Big)^a \,,
\qquad
\norm{\nabla^2u(\cdot,t)}_{L^\infty(B(1))}
\leq C (-t)^{-1-\gamma}\Big(\log\frac e{(-t)}\Big)^{-a}
\,,
\label{eq:pointwise:log:bounds}
\end{align}
for all $t<0$ sufficiently close to zero, with 
\begin{itemize}[leftmargin=2em] 
\item either $0<\gamma<1/2$ and $a\in\RR$,  
\item or $\gamma=1/2$ and $a>0$.
\end{itemize}
Then $(0,0)$ is a regular point of the Navier-Stokes solution $(u,\pi)$.
\end{corollary}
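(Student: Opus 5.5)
The plan is to reduce Corollary~\ref{cor:pointwise:logs} to Theorem~\ref{thm:pointwise:variable} by exhibiting an Euler length $\ell$ for which~\eqref{eq:pointwise:log:bounds} is exactly~\eqref{eq:pointwise:velocity}. We set
\[
\ell(t):=(-t)^{\gamma}\Big(\log\frac e{(-t)}\Big)^{a}\,.
\]
With this choice, $\ell(t)/(-t)=(-t)^{\gamma-1}(\log\frac e{(-t)})^a$ and $1/((-t)\ell(t))=(-t)^{-1-\gamma}(\log\frac e{(-t)})^{-a}$. Hence the two bounds in~\eqref{eq:pointwise:log:bounds} are literally the two bounds in~\eqref{eq:pointwise:velocity}, with $C_0=C_2=C$.

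The bounds are only assumed for $t$ close to $0$, say on $(-T,0)$. We handle this by parabolic rescaling. Set $u_\lambda(x,t)=\lambda u(\lambda x,\lambda^2t)$ with $\lambda^2\le T$, $\lambda\le1$. This is again an axisymmetric suitable weak solution on $Q$, smooth on compact subsets, and regularity of $(0,0)$ is invariant under the rescaling. The rescaled bounds are of the same form with $\ell_\lambda(t):=\ell(\lambda^2t)/\lambda$ and adjusted constants. Indeed,
\[
\lambda|u(\lambda x,\lambda^2 t)|\le \lambda\,\frac{\ell(\lambda^2t)}{\lambda^2(-t)}=\frac{\ell_\lambda(t)}{(-t)}\,,
\]
and similarly for $\lambda^3|\nabla^2u|$. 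Restricting to $B(1)$ is harmless since $\lambda\le1$. The properties of an Euler length are also invariant under $\ell\mapsto\ell(\lambda^2\cdot)/\lambda$, so it suffices to verify Definition~\ref{def:Euler:length} for $\ell$ near $t=0$. Continuity and positivity on all of $(-1,0)$ is then automatic, and if needed we modify $\ell$ on a compact subinterval away from $0$ without affecting anything.

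Next we check the properties of Definition~\ref{def:Euler:length} near $0$. Write $s=-t\in(0,s_0)$ and $L=\log(e/s)\ge1$.
\begin{itemize}[leftmargin=2em]
\item \emph{Monotonicity.} We have $\frac{d}{ds}\log\ell=\frac{\gamma}{s}-\frac{a}{sL}=\frac1s(\gamma-a/L)$. This is positive for $s$ small, since $\gamma>0$ and $L\to\infty$, so $\ell$ is non-increasing in $t$ near $0$.
\item \emph{Doubling.} We have $\ell(2t)/\ell(t)=2^\gamma(\log\frac e{2s}/\log\frac es)^a$. The ratio of logarithms lies in $[1/2,1]$ once $s$ is small, so the doubling ratio is bounded by a constant $D$.
\item \emph{Vanishing of $\ell$.} $\ell\to0$ holds since $\gamma>0$ and powers beat logarithms.
\item \emph{Condition~\eqref{visczero}.} We have $(-t)/\ell^2=s^{1-2\gamma}L^{-2a}$. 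For $\gamma<1/2$ this tends to $0$ for every real $a$. For $\gamma=1/2$ it equals $L^{-2a}\to0$ precisely because $a>0$.
\end{itemize}

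Applying Theorem~\ref{thm:pointwise:variable} to the rescaled solution then yields regularity of $(0,0)$.

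The only mildly delicate step is the localization in time: either the rescaling above, or simply extending $\ell$ to $(-1,0)$ and noting that for $t$ away from $0$ the bounds~\eqref{eq:pointwise:velocity} hold with large constants because $u$ is smooth there. Even this second route needs care, since $B(1)$ is open and smoothness is only known on compact subsets. That is why I prefer the rescaling: it shrinks $B(1)$ into the original ball, where the hypotheses hold directly. The remaining verifications are elementary calculus.
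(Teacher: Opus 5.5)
Your proposal is correct and follows the paper's proof essentially verbatim. It uses the same choice $\ell(t)=(-t)^\gamma(\log(e/(-t)))^a$, the same logarithmic-derivative check of Definition~\ref{def:Euler:length} near $t=0$, and the same parabolic rescaling $u_\lambda(x,t)=\lambda u(\lambda x,\lambda^2t)$ with Euler length $\lambda^{-1}\ell(\lambda^2t)$; as your own computation shows, the rescaled bounds hold with the same constants $C_0=C_2=C$, so no adjustment of constants or modification of $\ell$ away from $0$ is needed.
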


\begin{proof}[Proof of Corollary~\ref{cor:pointwise:logs}]
We take $\ell(t)=(-t)^\gamma(\log(e/(-t)))^a$. Its logarithmic derivative is
\[
\frac{t\ell'(t)}{\ell(t)}
=\gamma-\frac{a}{\log(e/(-t))}
\,.
\]
Since $t<0$, in each stated range $\ell$ is non-increasing near zero, is
doubling, and tends to zero. We also have
\[
\frac{(-t)}{\ell(t)^2}
=(-t)^{1-2\gamma}\left(\log\frac e{(-t)}\right)^{-2a}\longrightarrow0
\,.
\]
Thus, for some $0<T_1\ll 1$, the function $\ell$ satisfies the requirements of Definition~\ref{def:Euler:length} with $(-T_1,0)$ and $(-T_1/2,0)$ in place of $(-1,0)$ and $(-1/2,0)$, and on $(-T_1,0)$ the bounds~\eqref{eq:pointwise:log:bounds} hold and are nothing but~\eqref{eq:pointwise:velocity}. Applying Theorem~\ref{thm:pointwise:variable} to $u_\lambda(x,t)=\lambda u(\lambda x,\lambda^2t)$, $\pi_\lambda(x,t)=\lambda^2\pi(\lambda x,\lambda^2t)$, with $\lambda=T_1^{1/2}$ and the Euler length $\lambda^{-1}\ell(\lambda^2t)$, for which~\eqref{eq:pointwise:velocity} holds with the same constants, we conclude.
\end{proof}

\section{Weaker vorticity assumptions}
\label{sec:sharp}
The second derivative bound in Theorem~\ref{thm:pointwise:variable} is more than the proof uses. In this section we state the form of the theorem which the proof actually gives, with a H\"older seminorm of the azimuthal vorticity and a bound on the potential vorticity in place of $\nabla^2u$. The only additional ingredient we need is an interior div-curl estimate.  

\begin{theorem}[{\bf Local regularity under matched vorticity bounds}]
\label{thm:sharp}
Let $0<\alpha \leq 1$. Let $(u,\pi)$ and $\ell$ be as in Theorem~\ref{thm:pointwise:variable}, with~\eqref{eq:pointwise:velocity} being replaced by the assumptions
\begin{align}
\norm{u(\cdot,t)}_{L^\infty(B(1))}
\leq C_0\frac{\ell(t)}{(-t)}
\,,
\quad
[\omega_\theta(\cdot,t)]_{C^\alpha(B(1))}
\le\frac{C_\alpha}{(-t)\ell(t)^\alpha}
\,,
\quad
\norm{\Omega(\cdot,t)}_{L^\infty(B(1))}
\le\frac{C_H}{(-t)\ell(t)}
\,,
\label{eq:sharp:matched}
\end{align}
for all $t\in(-1,0)$.\footnote{Here $\omega_\theta$ is viewed as a rotation-invariant scalar function on $B(1)$, equal to zero on the axis, and the H\"older seminorm is taken with respect to the Euclidean distance.}
Then $(0,0)$ is a regular point. When $\alpha=1$ the third bound follows from the second, and may be omitted.
\end{theorem}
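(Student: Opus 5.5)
We follow the proof of Theorem~\ref{thm:pointwise:variable} and check where the bound on $\nabla^2u$ was used. It entered at three points, and we replace each by the hypotheses~\eqref{eq:sharp:matched}. First, it was used to derive the scale-invariant meridional gradient bound~\eqref{eq:pointwise:meridional:bounded}. Second, through Lemma~\ref{lem:pointwise}, it gave the Lipschitz bound on $\Theta_n$ and the bound~\eqref{eq:pointwise:quotient} on $\Omega$. Third, it gave the $C^{1,1}$ compactness of $B_n(\cdot,-1)$ in Step~5. Once these are in place, Proposition~\ref{prop:pointwise:meridional:small} applies unchanged, since it uses only~\eqref{eq:pointwise:meridional:small}.

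\emph{Step 1: interior div-curl estimate.} We first prove $(-t)\norm{\nabla b(\cdot,t)}_{L^\infty(B(3/4))}\le C_1$. Take $x_0\in B(3/4)$ and the ball $B(x_0,\ell(t))\subset B(1)$, which is admissible after shrinking $T_1$. Rescale $b$ to the unit ball by setting $\tilde b(y)=\frac{(-t)}{\ell}b(x_0+\ell y,t)$. Then $\tilde b$ is divergence free and $|\tilde b|\le C_0$. Its curl is $\tilde\Theta e_\theta$ with $\tilde\Theta=(-t)\omega_\theta(x_0+\ell y,t)$, and $[\tilde\Theta]_{C^\alpha}\le C_\alpha$. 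The field $e_\theta$ is smooth only away from the axis, so we control $\curl\tilde b=\tilde\Theta e_\theta$ in $C^\alpha$ by splitting $\omega_\theta e_\theta = \Omega\,(-x_2,x_1,0)$.
\begin{itemize}
\item Near the axis, i.e.\ at distance at most about $\ell$, we use $|\omega_\theta|\le r|\Omega|$ and $\ell|\Omega|\lesssim 1/(-t)$.
\item Away from the axis, $e_\theta$ has $C^\alpha$ norm about $r^{-\alpha}$, and $|\omega_\theta|\le r|\Omega|$ controls the product.
\end{itemize}
The third hypothesis in~\eqref{eq:sharp:matched} is exactly what makes $[\curl\tilde b]_{C^\alpha(B(1))}\lesssim C_\alpha+C_H$. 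The interior Schauder estimate for div-curl systems, $\norm{\nabla \tilde b}_{C^\alpha(B(1/2))}\lesssim\norm{\tilde b}_{L^\infty(B(1))}+\norm{\curl\tilde b}_{C^\alpha(B(1))}$, then gives a uniform $C^{1,\alpha}$ bound on $\tilde b$. Scaling back yields the gradient bound. Applied to the zoomed fields, it also gives $B_n(\cdot,\tau)$ bounded in $C^{1,\alpha}$ on compact sets. This replaces~\eqref{eq:pointwise:zoom:bounds:b} in Step~5 (through $C^1$ compactness via Arzel\`a--Ascoli) and in the equicontinuity of Step~1, where we use $C^\alpha\hookrightarrow C^0$ compactly instead of $C^{0,1}\hookrightarrow C^0$. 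When $\alpha=1$, $|\omega_\theta(x)|=|\omega_\theta(x)-\omega_\theta(x')|\le C_1' r/((-t)\ell)$, since $\omega_\theta$ vanishes on the axis. This yields the third bound, which may therefore be omitted.

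\emph{Step 2: rerunning the limit argument.} The scalars $\Theta_n(\cdot,\tau)$ are uniformly $C^\alpha$ with constant $C_\alpha/L_n^\alpha\le C_\alpha$. By the hypothesis, $\Omega_n$ obeys $|\Omega_n|\le C_H/|\tau|$. Lemmas~\ref{lem:ancient:axis} and~\ref{lem:ancient:receding} then go through verbatim.
\begin{itemize}
\item The transport step only used boundedness of $\Omega_n$ times $\eps_n$, the circulation bound, and the weak-$*$ convergence of $B_n$ with the Lipschitz bound $C_1/|\tau|$, which Step~1 now provides.
\item Lemma~\ref{lem:pointwise:characteristics} only needs continuity of the transported scalar.
\item The negative-norm estimate on $\p_\tau\curl B_n$ used only~\eqref{eq:pointwise:zoom:bounds:a} and the boundedness of $\curl B_n$.
\end{itemize}
Proposition~\ref{prop:pointwise:expanding} thus holds, and Proposition~\ref{prop:pointwise:meridional:small} concludes the proof.

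The main obstacle is Step~1: obtaining a $C^\alpha$ bound on the vector curl $\omega_\theta e_\theta$ near the axis, where $e_\theta$ is singular. We will handle it by the case split above at distance $\ell$ from the axis, and we expect the potential-vorticity bound to be exactly the ingredient this requires.
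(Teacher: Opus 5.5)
Your proposal is correct and matches the paper's proof in structure. You locate the same entry points of the Hessian bound and use an interior div-curl Schauder estimate at scale $\ell(t)$; your rescaling to the unit ball is the paper's interpolation \eqref{eq:pointwise:divcurl:infty}, whose optimal radius $(S/K)^{1/(1+\alpha)}$ is comparable to $\ell$. You also use $C^{0,\alpha}\hookrightarrow C^0$ in Step~1, a $C^{1,\beta}$ bound in Step~5, and $C_H/|\tau|$ in Step~4 of Lemma~\ref{lem:ancient:axis}.

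The one genuine difference is how you bound the vector $\omega_\theta e_\theta$ near the axis, and there you misidentify the role of the third hypothesis. The paper's Lemma~\ref{lem:pointwise:divcurl}(a) uses only that $\omega_\theta$ vanishes on the axis. Since the projection of $y\in B(1)$ onto the axis stays in $B(1)$, one has $|\omega_\theta(y)|\le[\omega_\theta]_{C^\alpha}r_y^\alpha$. Together with $|e_\theta(x)-e_\theta(y)|\le\min\{2,2|x-y|/r_y\}$, this gives $[\omega_\theta e_\theta]_{C^\alpha(B(1))}\le3[\omega_\theta]_{C^\alpha(B(1))}$ globally, with no reference to $\Omega$. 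So the potential-vorticity bound is not what the div-curl step needs. Its only role is the finite-axis transport step, where $\Omega_\infty$ must decay along backward characteristics.

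Your route via $|\omega_\theta|\le r|\Omega|$ also works, but only locally. Paired with the global seminorm $[e_\theta]_{C^\alpha}\sim r^{-\alpha}$, as you phrase it, it gives $r^{1-\alpha}|\Omega|$. This exceeds the target $1/((-t)\ell^\alpha)$ by the factor $(r/\ell)^{1-\alpha}$ when $r\gg\ell$. You must instead use the Lipschitz bound $2|x-y|/r_y$ with $|x-y|\lesssim\ell$. Then the H\"older constant is uniform only on balls of radius comparable to $\ell$; in zoomed variables it grows like $\rho^{1-\alpha}$ on balls of radius $\rho$. That still suffices for compactness on compact sets.

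Two small technical fixes are needed:
\begin{enumerate}
\item Apply the Schauder estimate in seminorm form, to $\curl\tilde b-\curl\tilde b(x_1)$ as the paper does. The quantity $\sup|\curl\tilde b|$ on the rescaled ball is not a priori bounded far from the axis; your own bound $C_H\tilde r$ grows with $\tilde r$.
\item At $\alpha=1$ the estimate yields $C^{1,\beta}$ only for $\beta<1$, not $C^{1,1}$ or $C^{1,\alpha}$. This is why the paper works with $\beta<\alpha$.
\end{enumerate}
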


For the power law $\ell(t)=(-t)^\gamma$, the three upper bounds in~\eqref{eq:sharp:matched} are  $(-t)^{\gamma-1}$, $(-t)^{-1-\alpha\gamma}$, and $(-t)^{-1-\gamma}$. That is, the rescaled field $U(y,t)=(-t)^{1-\gamma}u((-t)^\gamma y,t)$ is bounded, has $C^\alpha$ azimuthal vorticity, and has bounded potential vorticity on the expanding ball $B(1/(-t)^\gamma)$. Lemma~\ref{lem:pointwise}(b) turns the Hessian bound in~\eqref{eq:pointwise:velocity} into the second bound in~\eqref{eq:sharp:matched} at $\alpha=1$, and Lemma~\ref{lem:pointwise}(a) turns it into the third. Theorem~\ref{thm:pointwise:variable} is therefore the case $\alpha=1$ of Theorem~\ref{thm:sharp}.

\subsection{Interior div-curl estimates}
\label{sec:pointwise:interpolation}

The bounds in~\eqref{eq:sharp:matched} control the velocity and the H\"older seminorm of the azimuthal vorticity. The following lemma converts them into a gradient bound, and after the zoom-in into a uniform $C^{1,\beta}$ bound. Part~(a) passes from the scalar $\omega_\theta$ to the vector $\omega_\theta e_\theta$, and part~(b) records the interior Schauder estimates for the div-curl system, as used here.

\begin{lemma}[{\bf Interior div-curl estimates}]
\label{lem:pointwise:divcurl}
Let $0<\alpha\le1$.
\begin{enumerate}[label=(\alph*),leftmargin=2em]
\item Let $\varpi\in C^\alpha(B(R))$ be invariant under rotations about the $z$-axis and equal to zero on it. Then, with $\varpi\,e_\theta$ extended by zero to the axis, we have 
\begin{align}
[\varpi\,e_\theta]_{C^\alpha(B(R))}\le3\,[\varpi]_{C^\alpha(B(R))}
\,.
\label{eq:pointwise:vector:holder}
\end{align}
\item Let $F$ be a smooth divergence-free field on a ball $B(x_0,R)$, and let $S=\norm{F}_{L^\infty(B(x_0,R))}$ and $K=[\curl F]_{C^\alpha(B(x_0,R))}$. Then
\begin{align}
\norm{\nabla F}_{L^\infty(B(x_0,R/2))}
\leq C(\alpha)\Big( S R^{-1}+S^{\frac{\alpha}{1+\alpha}}K^{\frac{1}{1+\alpha}}\Big)
\,,
\label{eq:pointwise:divcurl:infty}
\end{align}
and, for every $0<\beta<\alpha$,
\begin{align}
\norm{F}_{C^{1,\beta}(B(x_0,R/2))}\le C(R,\alpha,\beta)\,(S+K)
\,.
\label{eq:pointwise:divcurl:holder}
\end{align}
\end{enumerate}
\end{lemma}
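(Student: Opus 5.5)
For part~(a) we compare values of $\varpi\,e_\theta$ at two points $x,y\in B(R)$. If either point lies on the axis, say $y$, then $\varpi(y)=0$ and
\[
|\varpi(x)e_\theta(x)-0|=|\varpi(x)-\varpi(y)|\le[\varpi]_{C^\alpha}|x-y|^\alpha .
\]
Otherwise we split
\[
\varpi(x)e_\theta(x)-\varpi(y)e_\theta(y)=\big(\varpi(x)-\varpi(y)\big)e_\theta(x)+\varpi(y)\big(e_\theta(x)-e_\theta(y)\big).
\]
The first term is bounded by $[\varpi]_{C^\alpha}|x-y|^\alpha$. For the second, let $r_x=|x'|$ and $r_y=|y'|$, and assume without loss of generality that $r_y\le r_x$. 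We use $|\varpi(y)|\le[\varpi]_{C^\alpha}r_y^\alpha$, obtained by comparing $y$ with its foot on the axis, which lies in $B(R)$ since the ball is centered on the axis. We also use $|e_\theta(x)-e_\theta(y)|\le\min\{2,\,|x'-y'|/\min(r_x,r_y)\}$. This gives $|e_\theta(x)-e_\theta(y)|\le 2|x-y|/r_y$ up to a constant of at most $2$, by the elementary estimate $|x'/r_x-y'/r_y|\le 2|x'-y'|/r_y$.

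We then consider two cases:
\begin{itemize}
\item if $r_y\le|x-y|$, the bound $2r_y^\alpha\le2|x-y|^\alpha$ applies;
\item if $r_y>|x-y|$, we get $r_y^\alpha\cdot 2|x-y|/r_y\le2|x-y|^\alpha$, since $(|x-y|/r_y)^{1-\alpha}\le1$.
\end{itemize}
Summing the two pieces gives the constant $3$. If the ball in (a) is not centered on the axis, we would instead compare $y$ with the axis point in $B(R)$ nearest to it, or simply note that the paper applies the estimate on balls centered at the origin. Checking the geometry of the case split is the only delicate point here.

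For part~(b) we first rescale: setting $\tilde F(x)=F(x_0+Rx)$ reduces to $R=1$, with $S$ unchanged and $K$ replaced by $R^\alpha K$. To prove \eqref{eq:pointwise:divcurl:holder} on the unit ball, we write $F$ on $B(x_0,3/4)$ using a cutoff $\chi$ that equals $1$ on $B(x_0,5/8)$ and is supported in $B(x_0,3/4)$. Since $\div F=0$ and $\curl F=\omega$, we have
\[
-\Delta F=\curl\curl F=\curl\omega
\]
in the distributional sense. Hence $F=\curl\big(\Delta^{-1}(-\chi\omega)\big)+h$, where $h$ is harmonic on $B(x_0,5/8)$; more precisely, $\curl(\chi\omega)$ is split using the Biot--Savart law with localized vorticity. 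The Biot--Savart part lies in $C^{1,\beta}$, with norm controlled by $\|\chi\omega\|_{C^\beta}$, by the Schauder theory for Calder\'on--Zygmund operators. The harmonic remainder is bounded by interior estimates in terms of its $L^\infty$ norm, which is at most $S$ plus the Biot--Savart part. It remains to control $\|\omega\|_{L^\infty}$ on $B(x_0,3/4)$ by $S+K$. We do this through an interpolation argument: the average of $\omega$ over a small ball is the integral of $\curl F$, which integrates by parts to a boundary integral bounded by $S$, and the H\"older seminorm controls the oscillation. Together this gives \eqref{eq:pointwise:divcurl:holder}, with $\beta<\alpha$ allowing for the cutoff losses.

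For \eqref{eq:pointwise:divcurl:infty} we exploit the scaling. Applying the unit-scale estimate on balls of radius $\rho\le R/2$ centered at points of $B(x_0,R/2)$ gives
\[
\|\nabla F\|_{L^\infty(B(x,\rho/2))}\le C(S\rho^{-1}+K\rho^\alpha).
\]
If $(S/K)^{1/(1+\alpha)}\le R/2$, we choose $\rho=(S/K)^{1/(1+\alpha)}$, which balances the two terms to $S^{\alpha/(1+\alpha)}K^{1/(1+\alpha)}$. Otherwise we take $\rho=R/2$, and then $K\rho^\alpha\lesssim SR^{-1}$. The main obstacle is the $L^\infty$ control of $\omega$ by $S+K$ in the unit-scale Schauder step. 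We handle it via the identity $\int_{B}\curl F=\int_{\partial B}n\times F$: it bounds the mean of $\omega$ over $B(x,\rho)$ by $C S\rho^{-1}$, and the deviation from this mean is at most $K\rho^\alpha$, which is exactly the same balance as above.
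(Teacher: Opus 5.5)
Your proof is correct. Part~(a) is the paper's argument essentially verbatim: the same splitting, the same bound $|\varpi(y)|\le M r_y^\alpha$ from the foot of $y$ on the axis, and the same case split $d\ge r_y$ versus $d<r_y$ giving the constant $3$. That foot lies in $B(R)$ because the ball is centered at the origin, so your digression about off-axis balls can be dropped.

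For part~(b) you take a different route. The paper uses no cutoff and never needs $\norm{\curl F}_{L^\infty}$. It writes $-\Delta F_j=\p_k\big(\varepsilon_{jk\ell}(G_\ell-G_\ell(x_1))\big)$ with $G=\curl F$, using that the constant $G(x_1)$ is annihilated by $\p_k$. It then applies the interior Schauder estimate for divergence-form equations (Gilbarg--Trudinger, Theorem~8.32) on $B(x_1,2h)$, so only the oscillation $(2h)^\alpha K$ of $G$ enters. Both inequalities follow from this single scaled estimate, with $h=\min\{R/4,(S/K)^{1/(1+\alpha)}\}$ for the first and $h=R/4$ for the second.

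You instead split $F$ into a localized Biot--Savart part, controlled by Calder\'on--Zygmund theory in H\"older spaces, plus a harmonic remainder. This forces you to bound $\norm{\omega}_{L^\infty}$ separately. Your identity $\int_B\curl F=\int_{\p B}n\times F$ does this correctly and gives $|\omega(x)|\le 3S/\rho+K\rho^\alpha$, which is the same balance. The two devices serve the same purpose: the mean of the curl is controlled by $F$ itself. The paper's constant subtraction removes it from the equation at no cost, so its proof is shorter and rests on one textbook estimate. Yours is more hands-on, and yields as a by-product an explicit pointwise interpolation bound for the vorticity.

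One small slip: under $\tilde F(x)=F(x_0+Rx)$ the seminorm $K$ becomes $R^{1+\alpha}K$, not $R^\alpha K$, because the curl carries a derivative. Your displayed bound $C(S\rho^{-1}+K\rho^\alpha)$ already uses the correct factor, so nothing downstream is affected.
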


\begin{proof}[Proof of Lemma~\ref{lem:pointwise:divcurl}]
For part~(a), set $M=[\varpi]_{C^\alpha(B(R))}$. If one of the points $x,y\in B(R)$ lies on the axis, then the vanishing of $\varpi$ on the axis gives
\[
|\varpi(x)e_\theta(x)-\varpi(y)e_\theta(y)|\le M|x-y|^\alpha.
\]
Suppose now that both points are off the axis. After interchanging them if necessary, assume that $0<r_y\le r_x$, and let $d=|x-y|$. The projections onto the axis belong to $B(R)$, so $|\varpi(y)|\le Mr_y^\alpha$. Moreover,
\[
|e_\theta(x)-e_\theta(y)|\le \min\Bigl\{2,\frac{2d}{r_y}\Bigr\}.
\]
It follows that
\begin{align*}
|\varpi(x)e_\theta(x)-\varpi(y)e_\theta(y)|
&\le |\varpi(x)-\varpi(y)|+|\varpi(y)|\,|e_\theta(x)-e_\theta(y)|\\
&\le M d^\alpha+2M r_y^\alpha\min\{1,d/r_y\}
\le 3M d^\alpha,
\end{align*}
where the last inequality follows separately from $d\ge r_y$ and $d<r_y$. This proves~\eqref{eq:pointwise:vector:holder}.

For part~(b), we write $G=\curl F$. Since $\div F=0$, we have $-\Delta F=\curl G$, so that for every $x_1\in B(x_0,R)$ each component of $F$ solves the Poisson equation $-\Delta F_j=\p_k\big(\varepsilon_{jk\ell}(G_\ell-G_\ell(x_1))\big)$. On a ball $B(x_1,2h)\subset B(x_0,R)$ the interior Schauder estimate~\cite[Theorem~8.32]{GilbargTrudinger01}, applied after rescaling to the unit ball, gives for every $0<\beta<\alpha$
\begin{align}
\norm{\nabla F}_{L^\infty(B(x_1,h))}+h^\beta[\nabla F]_{C^\beta(B(x_1,h))}
\le C(\alpha,\beta)\Big( h^{-1}  \norm{F}_{L^\infty(B(x_1,2h))} + h^\alpha K\Big)
\,.
\label{eq:pointwise:divcurl:scale}
\end{align}
Here we used that $|G-G(x_1)|\le(2h)^\alpha K$ and that $[G]_{C^\beta(B(x_1,2h))}\le(4h)^{\alpha-\beta}K$ on $B(x_1,2h)$. For~\eqref{eq:pointwise:divcurl:infty} we take $\beta=\alpha/2$, $x_1\in B(x_0,R/2)$, and $h=\min\{R/4,(S/K)^{1/(1+\alpha)}\}$, with $h=R/4$ when $K=0$. The right side of~\eqref{eq:pointwise:divcurl:scale} is then at most $C(\alpha,\beta)(S/h+h^\alpha K)$, which is bounded by the right side of~\eqref{eq:pointwise:divcurl:infty} after enlarging $C(\alpha)$. For~\eqref{eq:pointwise:divcurl:holder} we take $h=R/4$ in~\eqref{eq:pointwise:divcurl:scale} at every $x_1\in B(x_0,R/2)$; pairs of points at distance at least $R/4$ are handled by the bound on $\norm{\nabla F}_{L^\infty(B(x_0,R/2))}$.
\end{proof}

\subsection{Proof of Theorem~\ref{thm:sharp}}
\label{sec:sharp:proof}

\begin{proof}[Proof of Theorem~\ref{thm:sharp}]
The last assertion of the theorem is immediate: when $\alpha=1$, since $\omega_\theta$ vanishes on the axis, the second bound in~\eqref{eq:sharp:matched} gives $|\omega_\theta(x,t)|\le C_\alpha\,r/((-t)\ell(t))$, which is the third bound with $C_H=C_\alpha$.

By Proposition~\ref{prop:pointwise:meridional:small}, it suffices to prove that~\eqref{eq:pointwise:expanding:conclusion} holds under the assumption~\eqref{eq:sharp:matched}. The proof of Proposition~\ref{prop:pointwise:expanding} uses the hypotheses~\eqref{eq:pointwise:velocity} only through the bounds~\eqref{eq:pointwise:meridional:bounded} and~\eqref{eq:pointwise:quotient}, through the rescaled bounds~\eqref{eq:pointwise:zoom:bounds}, and through Lemmas~\ref{lem:ancient:axis} and~\ref{lem:ancient:receding}. We first show that~\eqref{eq:pointwise:meridional:bounded} and~\eqref{eq:pointwise:quotient} hold under the assumption~\eqref{eq:sharp:matched}. Then, we record what replaces the Hessian bound~\eqref{eq:pointwise:zoom:bounds:b} after the zoom, and we verify that the two lemmas survive the replacement.

\emph{The gradient and potential vorticity bounds.}
We fix $t\in(-T_1,0)$, with $T_1$ as in~\eqref{eq:pointwise:T:one}, and we apply Lemma~\ref{lem:pointwise:divcurl}(a) on $B(1)$: the second bound in~\eqref{eq:sharp:matched} gives
\begin{align}
[\curl b(\cdot,t)]_{C^\alpha(B(1))}
=[\omega_\theta(\cdot,t)\,e_\theta]_{C^\alpha(B(1))}
\le\frac{3C_\alpha}{(-t)\ell(t)^\alpha}
\,.
\label{eq:sharp:curl:holder}
\end{align}
For $x\in B(3/4)$ we apply~\eqref{eq:pointwise:divcurl:infty} to $b(\cdot,t)$ on $B(x,1/4)\subset B(1)$, with $S\le C_0\ell(t)/(-t)$ by the first bound in~\eqref{eq:sharp:matched}, and with $K\le3C_\alpha/((-t)\ell(t)^\alpha)$ by~\eqref{eq:sharp:curl:holder}. The length $\ell(t)$ cancels in the product $S^{\alpha/(1+\alpha)}K^{1/(1+\alpha)}$, while $4\ell(t)\le1$ by~\eqref{eq:pointwise:T:one}. Taking the supremum over $x\in B(3/4)$ therefore gives~\eqref{eq:pointwise:meridional:bounded} with constant
\[
C_1:=C(\alpha)\Big(C_0+C_0^{\frac{\alpha}{1+\alpha}}(3C_\alpha)^{\frac{1}{1+\alpha}}\Big)
\,.
\]
The bound~\eqref{eq:pointwise:quotient} is the third bound in~\eqref{eq:sharp:matched}, with $C_H$ in place of $\sqrt2C_2$.

\emph{The rescaled bounds.}
The fields and the domains of Section~\ref{sec:pointwise:zoom} are defined as before. The bounds~\eqref{eq:pointwise:zoom:bounds:a},~\eqref{eq:pointwise:zoom:bounds:c}, and~\eqref{eq:pointwise:zoom:bounds:e} hold as stated, and~\eqref{eq:pointwise:zoom:bounds:d} holds with $C_H$ in place of $\sqrt2C_2$. In place of the Hessian bound~\eqref{eq:pointwise:zoom:bounds:b}, the change of variables~\eqref{eq:pointwise:fixed:zoom} applied to the second bound in~\eqref{eq:sharp:matched} and to~\eqref{eq:sharp:curl:holder} gives, for every $\tau\le-1$ and all $n$ large depending on $\tau$,
\begin{align}
[\Theta_n(\cdot,\tau)]_{C^\alpha(\Dcal_n)}\le\frac{C_\alpha}{|\tau|L_n(\tau)^\alpha}\le\frac{C_\alpha}{|\tau|}
\,,\qquad
[\curl B_n(\cdot,\tau)]_{C^\alpha(\Dcal_n)}\le\frac{3C_\alpha}{|\tau|L_n(\tau)^\alpha}\le\frac{3C_\alpha}{|\tau|}
\,.
\label{eq:sharp:zoom:holder}
\end{align}
Here we used that $L_n\ge1$ for $\tau\le-1$.

\emph{The two lemmas.}
We claim that the conclusions of Lemmas~\ref{lem:ancient:axis} and~\ref{lem:ancient:receding} remain valid when the Hessian bound~\eqref{eq:pointwise:zoom:bounds:b} is replaced by~\eqref{eq:sharp:zoom:holder}, with the other rescaled bounds retained as described above. Indeed, the Hessian bound enters the proofs of the two lemmas in \emph{Step~1} and \emph{Step~5} only: \emph{Steps~2, 3, and~4} use, besides the hypotheses of the two lemmas, only the bounds~\eqref{eq:pointwise:zoom:bounds:a},~\eqref{eq:pointwise:zoom:bounds:c},~\eqref{eq:pointwise:zoom:bounds:d}, and~\eqref{eq:pointwise:zoom:bounds:e}, the vanishing of $\eps_n$, and the conclusions of \emph{Step~1}.

In \emph{Step~1} it makes $\curl B_n$ and $\Theta_n$ Lipschitz in $y$, uniformly in $n$ on compact sets. This information is used twice: in the compactness inequality, through the compact embedding $C^{0,1}(\overline{B(\bar y,\rho)})\hookrightarrow C^0(\overline{B(\bar y,\rho)})$, and for the equicontinuity in $y$ of the scalars $\Theta_n$. Under~\eqref{eq:sharp:zoom:holder} the two fields are instead H\"older continuous in $y$ with exponent $\alpha$, uniformly in $n$ on compact sets. Since the embedding $C^{0,\alpha}(\overline{B(\bar y,\rho)})\hookrightarrow C^0(\overline{B(\bar y,\rho)})$ is compact as well, the compactness inequality holds with the $C^{0,\alpha}$ norm in place of the $C^{0,1}$ norm, and the equicontinuity in $y$ follows from the H\"older bound. The rest of \emph{Step~1} is unchanged. In particular, $|\Theta|\le\sqrt2C_1/|\tau|$, and in Lemma~\ref{lem:ancient:axis} the limit $\Omega_\infty$ inherits the bound $C_H/|\tau|$ from~\eqref{eq:pointwise:zoom:bounds:d}, so that the bound $\sqrt2C_2/|\tau|$ used in \emph{Step~4} becomes $C_H/|\tau|$.

In \emph{Step~5}, the bounds~\eqref{eq:pointwise:zoom:bounds:a} and~\eqref{eq:sharp:zoom:holder} at $\tau=-1$, where $L_n(-1)=1$, together with~\eqref{eq:pointwise:divcurl:holder} on unit balls, which lie in $\Dcal_n$ for $n$ large, bound $B_n(\cdot,-1)$ in $C^{1,\beta}$ on every compact subset of $\RR^3$, for every $\beta<\alpha$. This replaces the $C^{1,1}$ bound. A further subsequence again converges in $C^1$ on compact sets, and the rest of \emph{Step~5} is unchanged. This proves the claim.

\emph{Conclusion.}
The proof in Section~\ref{sec:pointwise:applications} then gives~\eqref{eq:pointwise:expanding:conclusion} under the weaker assumption~\eqref{eq:sharp:matched}, and thus Proposition~\ref{prop:pointwise:meridional:small} proves Theorem~\ref{thm:sharp}.
\end{proof}

\section{Sublinear Euler profiles under weaker regularity}
\label{sec:sharp:civ}
In this section we carry out, for the self-similar Euler profiles of~\cite[Section~4]{CIV26}, the weakening of regularity hypotheses announced in Remark~\ref{rem:pointwise:comments}(d). In Section~4 of our previous paper~\cite{CIV26}, we considered axisymmetric, globally self-similar solutions of the 3D Euler equations, with self-similar profile $U$ and similarity exponent $\gamma>0$. Constantin, Ignatova, and Vicol~\cite[Theorem~4.5]{CIV26} proved that $\gamma\ge\frac12$ for every nonzero $C^2$ profile satisfying $U(0)=0$, sublinearity at infinity, and the far-field bounds
\[
|U(y)|\leq C|y|\langle y\rangle^{-1/\gamma},
\qquad
|\nabla U(y)|+|\curl U(y)|\leq C\langle y\rangle^{-1/\gamma}.
\]
That paper also fixes the spatial units by the normalization $\|\nabla^2U\|_{L^\infty}=1$. The normalization does not enter the argument below. Here we retain $U(0)=0$, continuity, sublinearity, local H\"older regularity of the azimuthal vorticity, and local boundedness of the potential vorticity, but impose no far-field decay beyond sublinearity. We use the pressure-free weak form of the self-similar Euler equation $(1-\gamma)U+\gamma y\cdot\nabla U+U\cdot\nabla U+\nabla P=0$. The identity $y\cdot\nabla U=\div(y\otimes U)-3U$ gives~\eqref{eq:CIV:weak} below.

\begin{proposition}[{\bf Sublinear self-similar Euler profiles}]
\label{prop:civ}
Let $\gamma>0$ and let $U\in C(\RR^3;\RR^3)$ be axisymmetric, with $\div U=0$ in $\Dcal'(\RR^3)$, $U(0)=0$, $|U(y)|/|y|\to0$ as $|y|\to\infty$, and
\begin{align}
\int_{\RR^3}\Big((1-4\gamma)\,U\cdot\Xi-\gamma\,(y\otimes U):\nabla\Xi-(U\otimes U):\nabla\Xi\Big)\,dy=0
\,,
\label{eq:CIV:weak}
\end{align}
for all $\Xi\in C^\infty_0(\RR^3;\RR^3)$ with $\div\Xi=0$.
Let $U=b_U+U_\theta e_\theta$, with $b_U=U_re_r+U_ze_z$, be the decomposition of $U$ in the cylindrical frame of the variable $y$, where $r=|y'|$. Assume that $\curl b_U=\omega_\theta e_\theta$ in $\Dcal'(\RR^3)$, where $\omega_\theta$ is an axisymmetric scalar which vanishes on the axis and belongs to $C^{0,\alpha}_{\mathrm{loc}}(\RR^3)$ for some $\alpha\in(0,1]$. Also assume that the potential vorticity $\omega_\theta/r$ is bounded on $B(\rho)\cap\{r>0\}$ for every $\rho>0$; this is automatic when $\alpha=1$. If $U\not\equiv0$, then $\gamma\ge\frac12$. If $U_\theta\equiv0$, then $U\equiv0$ for every $\gamma>0$.
\end{proposition}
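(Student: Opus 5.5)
The plan is to run the two eliminations of~\cite[Theorem~4.5]{CIV26} in their stationary form, for the drift $V:=\gamma y+b_U$ of the self-similar Euler equation. First the circulation $G:=rU_\theta$ is eliminated when $\gamma<\frac12$, and then the potential vorticity $H:=\omega_\theta/r$ is eliminated. After that, $b_U$ is curl free, divergence free and sublinear, hence constant by Liouville, and therefore zero because $U(0)=0$. At the level of smooth profiles the two scalar equations are
\begin{align*}
(1-2\gamma)\,G+V\cdot\nabla G=0\,,
\qquad
(1+\gamma)\,H+V\cdot\nabla H=\p_z\Big(\frac{U_\theta^2}{r^2}\Big)\,.
\end{align*}
They are obtained by inserting the ansatz $u=(-t)^{\gamma-1}U(x/(-t)^\gamma)$ into~\eqref{eq:swirl:eqn} and~\eqref{eq:Omega:eqn} without viscosity. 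Along the forward flow $\dot Y=V(Y)$ they give $G(Y(s))=e^{(2\gamma-1)s}G(Y(0))$, and, once the swirl is gone, $H(Y(s))=e^{-(1+\gamma)s}H(Y(0))$. Following a trajectory backward, $s\to-\infty$, both exponentials blow up when $\gamma<\frac12$. So $G(y_0)=0$ and $H(y_0)=0$ as soon as $G$ and $H$ stay bounded along the backward trajectory through $y_0$. If $U_\theta\equiv0$ the first step is vacuous, and the second argument works for every $\gamma>0$; this gives the last claim of the proposition.

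\emph{Step 1: regularity of $b_U$.} I would mollify $U$ and apply Lemma~\ref{lem:pointwise:divcurl}(b) to $b_U$, using Lemma~\ref{lem:pointwise:divcurl}(a) to control $[\omega_\theta e_\theta]_{C^\alpha}$ on balls. This shows that $b_U\in C^{1,\beta}_{\rm loc}$, so $V$ is locally Lipschitz and its flow is well defined. Since $U_r$ vanishes on the axis, $|U_r|\le L_\rho r$ on $B(\rho)$. Hence $\varrho=|Y'|$ satisfies $|\dot\varrho|\le(\gamma+L_\rho)\varrho$, and a trajectory starting off the axis stays off the axis on every finite time interval, exactly as in Step~4 of the proof of Lemma~\ref{lem:ancient:axis}. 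Sublinearity gives $y\cdot V\ge\gamma|y|^2-|y||U|>0$ for $|y|\ge R_0$. Therefore a backward trajectory never leaves $B(\max\{|y_0|,R_0\})$, so it exists for all $s\le0$ and remains in a fixed compact set. On that compact set $G=rU_\theta$ is bounded since $U$ is continuous, and $H$ is bounded by the hypothesis on the potential vorticity. Compact subsets of finite-time segments of the trajectory lie off the axis.

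\emph{Step 2: weak transport equations.} I would derive the two equations in weak form from~\eqref{eq:CIV:weak}. For the swirl I would take $\Xi=\varphi\,e_\theta$, with $\varphi$ axisymmetric and supported off the axis; such a field is divergence free. For the azimuthal vorticity I would take $\Xi=\curl(\psi\,e_\theta)$. Then I would integrate by parts into the form $\int G\,(\div(V\phi)-(1-2\gamma)\phi)=0$, and the analogous form for $H$ (or for $\omega_\theta$ with its stretching term). In the second equation the swirl source is $\p_z(U_\theta^2/r^2)$. It drops out once Step~3 below gives $U_\theta\equiv0$, or it is absent from the outset when $U_\theta\equiv0$.

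\emph{Step 3: constancy along characteristics.} Next I would adapt Lemma~\ref{lem:pointwise:characteristics} to a stationary drift with constant divergence $\div V=3\gamma$ and a zeroth-order term. The scalars are mollified in space, and the DiPerna--Lions commutator is bounded by the Lipschitz constant of $V$ on compacts times the modulus of continuity of the scalar. This uses continuity of $G$, and of $H$ off the axis, which follows from $\omega_\theta\in C^\alpha$. Integrating along $Y$ then gives the exponential identities exactly. Combined with Step~1, they yield $G\equiv0$ when $\gamma<\frac12$, and then $H\equiv0$ off the axis, so $\omega_\theta\equiv0$. Finally $b_U$ is divergence free, curl free and sublinear, so each component is a sublinear harmonic function, hence constant; since $U(0)=0$, we get $b_U\equiv0$. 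Together with $U_\theta\equiv0$ this contradicts $U\not\equiv0$.

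I expect Step~2 and Step~3 to be the main obstacle: extracting the pressure-free weak equations for the nonlinear quantities $G$ and $H$ from~\eqref{eq:CIV:weak} when $U$ is merely continuous, and carrying out the commutator estimate with a non-solenoidal drift and the singular factor $1/r$. I would work throughout on sets away from the axis, where $V$ is Lipschitz and $1/r$ is smooth. I would control the quadratic term $U_\theta^2$ by mollifying $U$ and using that $U_\theta$ is continuous.
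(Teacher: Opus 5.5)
Your proposal follows the paper's proof essentially step for step: mollification plus the div-curl lemma give $b_U\in C^{1,\beta}_{\rm loc}$; backward characteristics of $V=\gamma y+b_U$ stay off the axis and are confined by sublinearity; weak circulation and potential-vorticity identities come from $\Xi\propto e_\theta$ and $\Xi=\curl(\chi e_\theta)$; a stationary DiPerna--Lions commutator argument yields the exponential identities with rates $1-2\gamma$ and $1+\gamma$; and Liouville finishes. The one detail to add is that these weak identities are obtained only for axisymmetric test functions, whereas mollifying at a point means testing with the non-axisymmetric $\eta_\delta(y-\cdot)$; the paper closes this by averaging the test function over rotations, which is legitimate because the transported scalar is axisymmetric and $V$ is rotation-equivariant.
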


\begin{proof}[Proof of Proposition~\ref{prop:civ}]
The proof is carried out in six steps. 

\emph{Step 1: regularity of the meridional part.} Since a continuous axisymmetric field is parallel to $e_z$ on the axis, $U_r$ and $U_\theta$ vanish there, and $b_U$ is continuous on $\RR^3$. The identity $\div(U_\theta e_\theta)=0$ holds in $\Dcal'(\RR^3)$: away from the axis it follows from the cylindrical divergence formula, while integration over the angular variable and the identity $(U_\theta e_\theta)\cdot e_r=0$ exclude an axis contribution. Hence $\div b_U=0$ in $\Dcal'(\RR^3)$.

We fix $\rho>0$, and for $0<\eps\le\rho/2$ set $b_\eps=b_U*\eta_\eps$, where $\eta$ is a radial mollifier supported in $B(1)$. Radial convolution preserves rotation equivariance and reflection symmetry across planes containing the axis. Thus $b_\eps$ is smooth, axisymmetric, and meridional, with $\div b_\eps=0$ and $\curl b_\eps=(\omega_\theta e_\theta)*\eta_\eps$. By~\eqref{eq:pointwise:vector:holder},
\[
[\curl b_\eps]_{C^\alpha(B(\rho/2))}\le3[\omega_\theta]_{C^\alpha(B(\rho))},
\qquad
\norm{b_\eps}_{L^\infty(B(\rho/2))}\le\norm{b_U}_{L^\infty(B(\rho))}.
\]
The bound~\eqref{eq:pointwise:divcurl:holder} on $B(\rho/2)$ therefore bounds $b_\eps$ in $C^{1,\beta}(B(\rho/4))$ uniformly in $\eps$, for every $\beta<\alpha$. For a fixed $\beta<\alpha$, we apply this statement with an exponent strictly between $\beta$ and $\alpha$. Since $b_\eps\to b_U$ uniformly on compact sets, compactness gives $b_U\in C^{1,\beta}_{\mathrm{loc}}(\RR^3)$ and $b_\eps\to b_U$ in $C^1$ on compact sets. In particular, $V:=\gamma y+b_U$ is locally Lipschitz, with $\div V=3\gamma$.

\emph{Step 2: characteristics.} Since $V$ is locally Lipschitz, the equation $\dot Y=V(Y)$ has unique solutions. By \emph{Step~1}, $U_r=0$ on the axis, so that $V$ is parallel to $e_z$ there, the axis is invariant, and, by uniqueness, a solution which starts off the axis stays off it. By sublinearity we may fix $R_\flat$ with $|U(y)|\le\frac\gamma2|y|$ for $|y|\ge R_\flat$; since $|b_U|\le|U|$, we then have $\frac{d}{d\tau}|Y|^2=2\gamma|Y|^2+2Y\cdot b_U(Y)\ge\gamma|Y|^2$ whenever $|Y|\ge R_\flat$, so that $|Y(\tau)|\le\max\{|Y(0)|,R_\flat\}$ for $\tau\le0$. The standard continuation criterion for locally Lipschitz ordinary differential equations now shows that the backward characteristic starting at a point $y_0$ off the axis exists on $(-\infty,0]$, stays in the compact set $K_0:=\overline{B(\max\{|y_0|,R_\flat\})}$, and stays off the axis.

\emph{Step 3: a stationary form of Lemma~\ref{lem:pointwise:characteristics}.} Let $\Ecal\subset\RR^3$ be open, $V\colon\Ecal\to\RR^3$ locally Lipschitz with $\div V=c$, $f\in C(\Ecal)$, $\kappa\in\RR$, and
\begin{align}
\int_{\Ecal}f\,\big(\kappa\psi-V\cdot\nabla\psi\big)\,dy=0\qquad\mbox{for all }\psi\in C^\infty_0(\Ecal)
\,.
\label{eq:CIV:stationary:weak}
\end{align}
Equation~\eqref{eq:CIV:stationary:weak} is the weak form of  $V\cdot\nabla f+\mu f=0$,  for $\mu:=\kappa+c$. If $Y\colon[\tau_0,\tau_1]\to\Ecal$ solves $\dot Y=V(Y)$, then $e^{\mu\tau}f(Y(\tau))$ is constant. In order to prove this fact, we let $N\subset\Ecal$ be a compact neighborhood of $Y([\tau_0,\tau_1])$, we take $\delta<\mathrm{dist}(N,\Ecal^\complement)$, and we set $f^\delta=f*\eta_\delta$ on $N$. Testing~\eqref{eq:CIV:stationary:weak} with $\psi=\eta_\delta(y-\cdot)$ and using $\int(V(y)-V(\zeta))\cdot(\nabla\eta_\delta)(y-\zeta)\,d\zeta=-c$, due to   $\div V=c$, we obtain that on $N$:
\[
V\cdot\nabla f^\delta+\mu f^\delta
=\int\big(f(\zeta)-f(y)\big)\big(V(y)-V(\zeta)\big)\cdot(\nabla\eta_\delta)(y-\zeta)\,d\zeta+c\,(f^\delta-f)=:\mathsf{e}_\delta
\,.
\]
The error term $\mathsf{e}_\delta$ may be directly bounded as $|\mathsf{e}_\delta|\leq C\,\nu(\delta)\,(L_N+|c|)$, where $\nu$ is the modulus of continuity of $f$, and $L_N$ is the Lipschitz constant of $V$, both on the closed $\delta$-neighborhood of $N$, which is a compact subset of $\Ecal$. Up to sign, $\mathsf{e}_\delta$ is the commutator $(V\cdot\nabla f)*\eta_\delta-V\cdot\nabla f^\delta$ of DiPerna and Lions~\cite[Lemma~II.1]{DiPernaLions89}. Since $\frac{d}{d\tau}\big(e^{\mu\tau}f^\delta(Y(\tau))\big)=e^{\mu\tau}\mathsf{e}_\delta(Y(\tau))$, letting $\delta\to0$ proves the claim.

We shall also use this statement when the scalar identity is initially known only for axisymmetric test functions. Suppose that $\Ecal$ is invariant under rotations $R_\vartheta$ about the $z$-axis, $f$ is axisymmetric, and $V(R_\vartheta y)=R_\vartheta V(y)$. For an arbitrary $\psi\in C^\infty_0(\Ecal)$, define
\[
\overline\psi(y)=\frac1{2\pi}\int_0^{2\pi}\psi(R_\vartheta y)\,d\vartheta.
\]
A change of variables shows that the left side of~\eqref{eq:CIV:stationary:weak} has the same value at $\psi$ and at $\overline\psi$. Therefore, validity for all axisymmetric test functions implies validity for all test functions.

\emph{Step 4: the swirl.} We set $\Gamma:=rU_\theta=y_1U_2-y_2U_1$, which is continuous on $\RR^3$. For an axisymmetric $\psi\in C^\infty_0(\RR^3)$, the test function $\Xi=\psi\,(e_z\times y)=\psi\,r e_\theta$ is divergence free, and satisfies $\p_i\Xi_j=\p_i\psi\,(e_z\times y)_j+\psi\,\varepsilon_{j3i}$. Moreover, $U\cdot\nabla\psi=b_U\cdot\nabla\psi$ because $\psi$ is axisymmetric. With this test function $\Xi$, equation~\eqref{eq:CIV:weak} gives
\[
(1-5\gamma)\int\Gamma\psi-\int\Gamma\,V\cdot\nabla\psi=0.
\]
By the rotational averaging observation in \emph{Step~3}, this identity holds for every $\psi\in C^\infty_0(\RR^3)$. Step~3 with $\Ecal=\RR^3$, $\kappa=1-5\gamma$, $c=3\gamma$, and $\mu=1-2\gamma$ gives $e^{(1-2\gamma)\tau}\Gamma(Y(\tau))=\Gamma(y_0)$ for $\tau\le0$. Since $\Gamma$ is bounded on $K_0$, if $\gamma<\frac12$, then letting $\tau\to-\infty$ gives $\Gamma(y_0)=0$ for every $y_0$ off the axis, so that $U_\theta\equiv0$ and $U=b_U$.

\emph{Step 5: the potential vorticity.} Assume $U=b_U$, and let $\chi\in C^\infty_0(\{r>0\})$ be axisymmetric. Set $\Xi=\curl(\chi e_\theta)$, $\psi:=r\chi$, and $q:=\omega_\theta/r$, which is continuous on $\{r>0\}$. We identify the three terms of~\eqref{eq:CIV:weak}. First,
\[
\int b_U\cdot\Xi=\int\omega_\theta\chi=\int q\psi.
\]
To justify the next two identities at the stated regularity, we write $\curl b_\eps=\omega_{\theta,\eps}e_\theta$. By \emph{Step~1}, $b_\eps\to b_U$ in $C^1$ on the compact support of $\chi$, and hence $\omega_{\theta,\eps}\to\omega_\theta$ uniformly there. Using $\curl(y\cdot\nabla b_\eps)=\curl b_\eps+y\cdot\nabla\curl b_\eps$ and $y\cdot\nabla e_\theta=0$, we obtain
\begin{align*}
-\int(y\otimes b_U):\nabla\Xi
&=\lim_{\eps\to0}\int(3b_\eps+y\cdot\nabla b_\eps)\cdot\Xi\\
&=\lim_{\eps\to0}\int(4\omega_{\theta,\eps}+y\cdot\nabla\omega_{\theta,\eps})\chi\\
&=\int\omega_\theta(\chi-y\cdot\nabla\chi)
=\int q\,(2\psi-y\cdot\nabla\psi).
\end{align*}
Similarly, since $\div b_\eps=0$, $b_\eps\cdot\nabla b_\eps=\curl b_\eps\times b_\eps+\nabla(|b_\eps|^2/2)$, and $\div\Xi=0$, we have
\begin{align*}
-\int(b_U\otimes b_U):\nabla\Xi
&=\lim_{\eps\to0}\int(b_\eps\cdot\nabla b_\eps)\cdot\Xi\\
&=\lim_{\eps\to0}\int(\omega_{\theta,\eps}e_\theta\times b_\eps)\cdot\curl(\chi e_\theta)\\
&=\int(\omega_\theta e_\theta\times b_U)\cdot\curl(\chi e_\theta)
=-\int q\,b_U\cdot\nabla\psi,
\end{align*}
where the last equality uses the pointwise identity
\[
(e_\theta\times b_U)\cdot\curl(\chi e_\theta)
=-r^{-1}b_U\cdot\nabla(r\chi).
\]
Summing the three terms gives
\[
(1-2\gamma)\int q\psi-\int q\,V\cdot\nabla\psi=0
\]
for every axisymmetric $\psi\in C^\infty_0(\{r>0\})$, since any such $\psi$ has the form $\psi=r\chi$ with $\chi=\psi/r\in C^\infty_0(\{r>0\})$. By the rotational averaging observation in \emph{Step~3}, the identity holds for every test function in $C^\infty_0(\{r>0\})$. Step~3 with $\Ecal=\{r>0\}$, $\kappa=1-2\gamma$, $c=3\gamma$, and $\mu=1+\gamma$ gives $e^{(1+\gamma)\tau}q(Y(\tau))=q(y_0)$ for $\tau\le0$. Since $q$ is bounded on $K_0\cap\{r>0\}$ by hypothesis, letting $\tau\to-\infty$ gives $q(y_0)=0$. Thus $\omega_\theta\equiv0$ on $\{r>0\}$, hence on $\RR^3$ by continuity.

\emph{Step 6: conclusion.} We have shown that $\curl U=\curl b_U=0$ and $\div U=0$ in $\Dcal'(\RR^3)$, so $\Delta U=0$ in distributions. Thus, each component of $U$ is smooth and harmonic. A harmonic function with sublinear growth is constant, and $U(0)=0$ gives $U\equiv0$. The assumption $\gamma<\frac12$ was used in \emph{Step~4} only; therefore, this proof also shows that a swirl-free profile vanishes for every $\gamma>0$.
\end{proof}

\section*{Acknowledgments}
The work of P.C. was partially supported by NSF grant DMS-2606072. The work of M.I. was partially supported by NSF grant DMS-2204614. The work of V.V. was partially supported by the Collaborative NSF grant DMS-2307681 and by a Simons Investigator Award.

\section*{Disclaimer on the use of AI systems} 
Publicly available versions of Claude (Opus 5 and Fable 5.1, within Claude Code) and ChatGPT (Sol and Astra, within Codex) were used in the preparation of this manuscript. These systems were given access to our paper~\cite{CIV26} (which was written without the use of AI systems), from which the core idea of the proof is taken, and to an early draft of this manuscript, written entirely by the authors, which contained the whole proof at the level of an eight-page sketch. Claude and ChatGPT were used to fill in the proofs of routine lemmas and arguments in that draft, to perform an extended bibliographic search, and to produce the TikZ figure. Over the course of two extended sessions of Claude Code and Codex, the manuscript was then polished, in its exposition and in the verification of its mathematical computations, under the close guidance and supervision of the authors. Every suggestion, computation, and reference produced by these systems was independently verified and, where necessary, corrected or reformulated by the authors, who take full responsibility for the manuscript as it is presently written.

\end{document}